\newcommand{\al}{\alpha}
\newcommand{\be}{\beta}
\newcommand{\la}{\lambda}
\newcommand{\si}{\sigma}
\newcommand{\Si}{\Sigma}
\newcommand{\ga}{\gamma}
\newcommand{\om}{\omega}
\newcommand{\Om}{\Omega}
\newcommand{\ka}{\kappa}
\newtheorem{proposition}{Proposition}[section]
\newtheorem{definition}{Definition}[section]
\newtheorem{lemma}{Lemma}[section]
\newtheorem{theorem}{Theorem}[section]
\newtheorem{corollary}{Corollary}[section]
\theoremstyle{remark}% \newtheorem{Cor}{Corollary}
\newtheorem{remark}[theorem]{Remark}
\begin{document}

\title[]{Almost sure existence of global weak solutions for incompressible generalized Navier-Stokes equations}

\author{Y.-X. Lin}
\address{Yuan-Xin Lin
\newline\indent
School of Mathematical Sciences, Shanghai Jiao Tong University,
Shanghai, P. R. China}
\email{yuanxinlin@sjtu.edu.cn}

	\author{Y.-G. Wang}
\address{Ya-Guang Wang
	\newline\indent
	School of Mathematical Sciences, Center for Applied Mathematics, MOE-LSC and SHL-MAC, Shanghai Jiao Tong University,
	Shanghai, P. R. China}
\email{ygwang@sjtu.edu.cn  }
\maketitle

\date{}

\begin{abstract} 
In this paper we consider the initial value problem of the incompressible generalized Navier-Stokes equations in torus $\mathbb{T}^d$ with $d \geq 2$. The generalized Navier-Stokes equations is obtained by replacing the standard Laplacian in the classical Navier-Stokes equations by the fractional order Laplacian $-(-\Delta)^\al$ with $\al \in \left( \frac{2}{3},1 \right]$. After an appropriate randomization on the initial data, we obtain the almost sure existence of global weak solutions for initial data being in $\Dot{H}^s(\mathbb{T}^d)$ with $s\in (1-2\al,0)$. 
\end{abstract}
~\\
{\textbf{\scriptsize{2020 Mathematics Subject Classification:}}\scriptsize{ 35Q30, 35R60, 76D03}}.
~\\
\textbf{\scriptsize{Keywords:}} {\scriptsize{Generalized Navier-Stokes equations, almost sure existence, negative order Sobolev datum.}}

%++++++++++++++++++++++++++++++++++++++++++++++++++++++++++++++++++++++++++++++++++++++++
%
%
%                              Section 1
%
%
%
%++++++++++++++++++++++++++++++++++++++++++++++++++++++++++++++++++++++++++++++++++++++++++

\section{Introduction}
Consider the following initial value problem for the generalized incompressible Navier-Stokes equations in $\{t>0, x\in  \mathbb{T}^d\}$ with $d\ge 2$:
\begin{equation}\label{GNS}
    \left\{
    \begin{aligned}
       \partial_t u + (-\Delta)^\al u + (u \cdot \nabla) u + \nabla p & = 0, ~\ \ \ \ \ \ \ \ x \in \mathbb{T}^d, ~\  t>0,  \\
        \mathrm{div} \, u & =0, \\
        u(0) & = u_0,
    \end{aligned}
\right.
\end{equation}
where $u=(u_1, \cdots , u_d)^T$ and $p$ denote the velocity field and the pressure respectively, and $\mathbb{T}^d := (\mathbb{R}/ 2\pi \mathbb{Z})^d$. The fractional Laplacian $(-\Delta)^\al$ is defined via the Fourier transform
\begin{equation}
    (-\Delta)^\al u (x) = \mathcal{F}^{-1}\Big\{ |\xi|^{2\al} \mathcal{F}u(\xi) \Big\}(x), 
\end{equation}
where $\mathcal{F}$ and $\mathcal{F}^{-1}$ denote the Fourier transform and inverse Fourier transform, respectively.

The study of weak solutions to the classical Navier-Stokes equations, i.e. $\eqref{GNS}_1$ with $\al =1$, can date back to 1930s. For any given divergence-free $L^2(\mathbb{R}^d)$ initial data $u_0$, Leray \cite{Leray} obtained the existence of global weak solutions to the Cauchy problem, and the bounded domain counterpart was established by Hopf \cite{Hopf}. Later on, the existence of uniformly locally square integrable weak solutions was constructed by Lemarié-Rieussete \cite{LR}.
These well-posedness results are obtained by assuming the initial data belongs to the sub-critical or the critical spaces. Burq and Tzvetkov \cite{BT} pointed out that the cubic nonlinear wave equation is strongly ill-posed when the initial data being in the super-critical space, however, after an appropriate randomization on the initial data, they showed that the problem is locally well-posed for almost sure randomized initial data. Since then, their approach was widely used in studying other evolutional nonlinear PDEs with super-critical initial data, see \cite{BT1,DH,QTW,WH} and the references therein. In particular, there has been interesting progress on the Cauchy problem for the classical Navier-Stokes equations when the initial data being in $\Dot{H}^s(\mathbb{T}^d)$ or $\Dot{H}^s(\mathbb{R}^d)$ with $s \leq 0$. When $u_0 \in L^2(\mathbb{T}^3)$, Zhang and Fang \cite{ZF}, Deng and Cui \cite{DC} independently obtained the local mild solution for the randomized initial data, and the mild solution exists globally in time in a positive probability providing that the $L^2$ norm of $u_0$ is sufficiently small.  Nahmod, Pavlovic and Staffilani \cite{NPS} obtained the almost sure existence of global weak solutions for  randomized initial data when $u_0 \in \Dot{H}^s(\mathbb{T}^d)$ with $d=2,3$ and $-\frac{1}{2^{d-1}}<s<0$, and they also showed that the weak solutions are unique in dimension two. The whole space case was studied by  Chen, Wang, Yao and  Yu \cite{CWYY}. In \cite{WW}, Wang and Wang extended the results in \cite{NPS} and \cite{CWYY} to the case $-\frac{1}{2} \leq s <0$ when $d=3$. In \cite{DZ}, Du and Zhang improved the previous conclusions to the $\Dot{H}^s(\mathbb{T}^d)$ initial data with $-1<s<0$ for all $d \geq 2$.

There are also some interesting works on problems of generalized Navier-Stokes equations. Wu in \cite{W} obtained the existence of global weak solutions to the problem \eqref{GNS} for any $\al >0$, $d \geq 2$ and $u_0 \in L^2(\mathbb{T}^d)$, and showed that the weak solutions are classical  when 
$u_0 \in H^s(\mathbb{T}^d)$ for $s \geq 2\al$
with $\al \geq \frac{1}{2}+\frac{d}{4}$. When $d \geq 3$, $\al \in \left( \frac{1}{2},\frac{1}{2}+\frac{d}{4} \right)$ and $u_0 \in \Dot{H}^s(\mathbb{T}^d)$ or $\Dot{H}^s(\mathbb{R}^d)$ with
\begin{equation*}
    s \in \left\{ 
    \begin{aligned}
        (1-2\al, 0] & ,~\ \ \ \ \ \ \frac{1}{2} < \al \leq 1, \\
        [-\al, 0] & ,~\ \ \ \ \ \ 1< \al < \frac{1}{2} + \frac{d}{4},
    \end{aligned}
    \right.
\end{equation*}
by using the approach of \cite{BT}, Zhang and Fang \cite{ZF1} obtained the local existence of mild solutions to the problem \eqref{GNS} for randomized initial data, and the solutions exist globally in time in a positive probability providing that the $\Dot{H}^s$ norm of $u_0$ is sufficiently small.\textit{}

The aim of this paper is to extend the result in \cite{DZ} to the problem \eqref{GNS} of generalized Navier-Stokes equations, and to extend the partial results of \cite{ZF1} to be held in the set of probability one. More precisely, for $\al \in \left( \frac{2}{3}, 1 \right]$ and $d \geq 2$, we shall obtain the almost sure existence of global weak solutions to the problem \eqref{GNS} for randomized initial data, providing the initial data being in $\mathbb{H}^s(\mathbb{T}^d)$, where $s \in (1-2\al, 0)$. 

Firstly, we follow the way given in \cite{DZ,ZF1} to introduce the randomization setup. 
%explain what we mean by "a large class of the initial data in $\mathbb{H}^s(\mathbb{T}^d)$", where 
Let $\mathbb{H}^s(\mathbb{T}^d)=\overline{\mathcal{C}^\infty (\mathbb{T}^d)}^{||\cdot||_{H^s}}$, with
$$\mathcal{C}^\infty (\mathbb{T}^d) = \{ u \in C^\infty (\mathbb{T}^d) : \mathrm{div} u = 0, \int_{\mathbb{T}^d} u \, dx = 0 \}.$$ 
Define $\mathbb{Z}_*^d = \mathbb{Z}_+^d \cup \mathbb{Z}_-^d$, with
$$\mathbb{Z}_+^d := \{ k \in \mathbb{Z}^d : k_1>0 \} \cup \{ k \in \mathbb{Z}^d : k_1=0,k_2>0 \} \cup \cdots \cup \{ k \in \mathbb{Z}^d : k_1=k_2=\cdots =k_{d-1}=0,k_d>0 \},$$ and $\mathbb{Z}_-^d = -\mathbb{Z}_+^d$. For any fixed $k\in \mathbb{Z}_*^d$, let $k^\perp = \{ \eta \in \mathbb{R}^d : k \cdot \eta = 0 \}$, $\{e_{k,1},e_{k,2},\cdots,e_{k,d-1}\}$ be an orthonormal basis of $k^\perp$. Let
\begin{equation*}
    e_k(x) = \left\{
    \begin{aligned}
        \cos(k \cdot x),~\ \ \ \ \ \ \ \ k \in \mathbb{Z}_+^d, \\
        \sin(k \cdot x),~\ \ \ \ \ \ \ \ k \in \mathbb{Z}_-^d,
    \end{aligned}
    \right.
\end{equation*}
and denote by
\begin{equation*}
    e_k^j (x) = e_k(x) e_{k,j},~\ \ \ \ \ \ \ \ \forall k \in \mathbb{Z}_*^d,~\ \ j=1,2,\cdots,d-1.
\end{equation*}
Then, $\{e_k^j : k \in \mathbb{Z}_*^d,j=1,2,\cdots,d-1\}$ is a Fourier basis of $L_\si^2(\mathbb{T}^d)$ (up to the constant $\frac{\sqrt{2}}{(2\pi)^{\frac{d}{2}}}$).
 
Let $\{g_k^j(\omega)\}_{k\in \mathbb{Z}_*^d,\, j\in \{1, \cdots , d-1\}}$ be a sequence of independent, mean zero, real-valued random variables defined on a given probability space $(\Omega,\mathcal{F}, \mathbb{P})$ such that
\begin{equation}\label{HN}
    \exists C>0, \,\,\,\, \forall k \in \mathbb{Z}_*^d, \,\,\,\, \forall j \in \{1,\cdots, d-1\},\,\,\,\, \int_\Omega|g_k^j(\omega)|^{2n}\mathbb{P}(d\omega) \leq C,
\end{equation}
where $n$ is an integer satisfying
\begin{equation}
    2n \geq r_s := \frac{2d}{(3-2\ga)\al-2}, ~ \ \mathrm{where} ~\   \ga = \left( -\frac{1}{2}-\frac{s}{\al} \right)_+ := \max \left\{-\frac{1}{2}-\frac{s}{\al}, 0 \right\}.
\end{equation}
For $u_0 \in \mathbb{H}^s(\mathbb{T}^d)$, represented as 
\begin{equation}
    u_0= \sum_{j=1}^{d-1} \sum_{k \in \mathbb{Z}_*^d} u_{0k}^j e_k^j,\,\,\,\,\,\, \mathrm{where}\,\, \sum_{j=1}^{d-1} \sum_{k \in \mathbb{Z}_*^d} \left( |k|^s u_{0k}^j \right)^2  < \infty,
\end{equation}
we introduce its randomization by
\begin{equation}\label{RI}
    u_0^\omega (x) = \sum_{j=1}^{d-1} \sum_{k \in \mathbb{Z}_*^d} u_{0k}^j e_k^j g_k^j(\omega),
\end{equation}
which defines a measurable map from $(\Omega,\mathcal{F}, \mathbb{P})$ to $\mathbb{H}^s(\mathbb{T}^d)$ equipped with the Borel $\sigma$-algebra, and one can verify that $u_0^\omega \in L^2(\Omega, \mathbb{H}^s(\mathbb{T}^d))$.

Let $X(\mathbb{T}^d)$ be a Banach space, denote by $X_\si (\mathbb{T}^d) = \overline{C_\si^\infty (\mathbb{T}^d)}^{||\cdot||_X}$, where $C_\si^\infty(\mathbb{T}^d) = \{ f \in C^\infty (\mathbb{T}^d) : \mathrm{div} \, f = 0 \}$, and let
\begin{equation}\label{V}
\mathbb{V}=\left\{ w\in \left( \Dot{W}^{1,\max \left\{2,\left(\frac{d}{2\al}\right)^+ \right\}} (\mathbb{T}^d) \right)^d : \mathrm{div}w=0  \right\}, 
\end{equation}
where $\left(\frac{d}{2\al}\right)^+$ means any number larger than $ \frac{d}{2\al}$. Let $b$ be the continuous trilinear form given by
\begin{equation}\label{1.7}
    b(f,g,h) = \int_{\mathbb{T}^d} \sum_{j,k=1}^d f_j \frac{\partial {g_k}}{\partial_{x_j}} h_k \, dx. 
\end{equation}
For any $f \in C_\si^\infty 
(\mathbb{T}^d)$ and $g \in C^\infty 
(\mathbb{T}^d)$, denote by $B(f,g) = \mathrm{P} (f\cdot \nabla) g$, with $\mathrm{P}=Id+\nabla (-\Delta)^{-1} \mathrm{div}$ being the Leray projection. It is obvious that for any $h \in C_\si^\infty 
(\mathbb{T}^d)$, 
\begin{equation}\label{EE}
    \begin{aligned}
        |\langle B(f,g),h \rangle| & =|b(f,g,h)|=|-b(f,h,g)| \\
        & \leq ||f||_{L_x^{\min \left\{4,\frac{2d}{d-2\al} \right\}}} ||g||_{L_x^{\min \left\{4,\frac{2d}{d-2\al} \right\}}} ||\nabla h||_{L_x^{\max \left\{2,\frac{d}{2\al} \right\}}} \\
        & \leq ||f||_{\Dot{H}_x^\al} ||g||_{\Dot{H}_x^\al} ||h||_{\mathbb{V}},
    \end{aligned}
\end{equation}
where $\langle \cdot, \cdot \rangle$ represents the dual pairing with respect to the space variables. From \eqref{EE} and Lemma \ref{B} we can extend the bilinear term $B(f,g)$ to a bounded bilinear operator from $\Dot{H}_\si^\al \times \Dot{H}_x^\al$ to $\mathbb{V'}$, where $\mathbb{V'}$ denotes the dual space of $\mathbb{V}$. Thus by the divergence-free condition, we rewrite the problem \eqref{GNS} as
\begin{equation}\label{GNS1}
    \left\{
    \begin{aligned}
        \partial_t u + (-\Delta)^\al u + B(u,u) & = 0, \\
        \mathrm{div} \, u & = 0, \\
        u|_{t=0} & = u_0.
    \end{aligned}
    \right.
\end{equation}

The weak solutions of the problem \eqref{GNS1} is defined as the following one.

\begin{definition}\label{DW}
    Let $u_0 \in \mathbb{H}^s(\mathbb{T}^d)$ with $d \geq 2$ and $s \in (1-2\al, 0)$. A function $u \in L_{loc}^\infty ((0,T];L_\si^2(\mathbb{T}^d)) \cap L_{loc}^2((0,T];\Dot{H}_\si^\al(\mathbb{T}^d)) \cap C_{weak}([0,T]; t^{-\mu_s} \Dot{H}^s(\mathbb{T}^d))$ is a weak solution of the problem \eqref{GNS1} on $[0,T]$ if it satisfies $\frac{du}{dt} \in L^1(0,T;\mathbb{V}')$ and
    \begin{equation}
        \langle \frac{du}{dt},\phi \rangle(t) + \langle \Lambda^\al u,\Lambda^\al \phi \rangle(t) + \langle B(u,u), \phi \rangle (t)= 0, \,\,\,\,\,\,\,\,\,\,\,\,\,\forall \phi \in \mathbb{V},\,\,\mathrm{a.e.}\,\, t \in [0,T],
    \end{equation}
    \begin{equation}
        \lim_{t \rightarrow 0^+} ||t^{\mu_s}u(t)-u_0||_{\Dot{H}_x^s} = 0,
    \end{equation}
    where $(\Lambda^\al u) (x) = \mathcal{F}^{-1}\Big\{ |\xi|^{\al} \mathcal{F}u(\xi) \Big\}(x)$,
    \begin{equation}\label{Q}
        \mu_s = \left( \frac{s+1-\al}{2\al} \right)_+ := \max \left\{  \frac{s+1-\al}{2\al},0 \right\},
    \end{equation}
    and $f \in C_{weak}([0,T];t^{-\mu}X)$ means that for any $\psi \in X'
    $ (the dual space of $X$), $\langle  t^\mu f(t), \psi \rangle$ is continuous on $[0,T]$. We say that the problem \eqref{GNS1} has a global weak solution if for any $T>0$, it admits a weak solution on $[0,T]$.
\end{definition}

The main result of this paper can be stated as follows.

\begin{theorem}\label{TW}
    Let $\ga = \left( -\frac{1}{2}-\frac{s}{\al} \right)_+$, assume that $u_0 \in \mathbb{H}^s(\mathbb{T}^d)$ with $d \geq 2$ and $s \in (1-2\al, 0)$, and $u_0^\om$ is the randomization \eqref{RI} of $u_0$. Then there exists a set $\Sigma \subset \Om$ with 
probability $1$, i.e. $\mathbb{P}(\Sigma) = 1$, such that for any $\om \in \Sigma$, the problem \eqref{GNS1} with the initial data $u_0^\om$ admits a global weak solution in the sense of Definition \ref{DW} of the form
    \begin{equation}
        u = h^\om + w,
    \end{equation}
    with $h^\om = e^{-t(-\Delta)^\al}u_0^\om$, and satisfying for any $T>0$,
    \begin{equation}\label{RH1}
        (1 \land t )^{\ga + \frac{1}{2\al} - \frac{1}{2}} w \in C_{weak}\left([0,T];L_\si^2(\mathbb{T}^d)\right) \cap L^2\left(0,T;\Dot{H}_\si^\al(\mathbb{T}^d)\right),
    \end{equation}
    \begin{equation}\label{RH2}
        w \in C_{weak} \left( [0,T]; t^{-\mu_s} \Dot{H}^{2\al(\mu_s - \ga) +\al-1}(\mathbb{T}^d) \right).
    \end{equation}
\end{theorem}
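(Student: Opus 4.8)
The plan is to use the now-standard decomposition $u = h^\om + w$, where $h^\om = e^{-t(-\Delta)^\al}u_0^\om$ is the free (linear) evolution of the randomized data, and to solve for the fluctuation $w$ by a deterministic energy method on a set $\Sigma$ of full probability. The probabilistic input is isolated entirely in the analysis of $h^\om$: using the randomization \eqref{RI}, the moment assumption \eqref{HN}, and the smoothing of the fractional heat semigroup, I would first produce a set $\Sigma\subset\Om$ with $\mathbb{P}(\Sigma)=1$ on which $h^\om$ enjoys space--time integrability strictly better than its deterministic $\Dot{H}^s$ regularity would allow --- concretely, weighted bounds of the form $(1\wedge t)^{\beta}h^\om \in L^{r}_t L^{q}_x$ for exponents dictated by $r_s$ and $\ga$ from \eqref{HN}. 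This gain is what compensates for $s<0$, and is precisely the reason the moment order $2n\ge r_s$ is imposed.

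Given such $h^\om$, and since $h^\om$ already annihilates the linear part, the fluctuation $w$ must solve
\begin{equation*}
    \partial_t w + (-\Delta)^\al w + B(w,w) + B(w,h^\om) + B(h^\om,w) + B(h^\om,h^\om) = 0, \qquad w|_{t=0}=0 .
\end{equation*}
I would construct $w$ by Galerkin approximation against the Fourier basis $\{e_k^j\}$: the projected system is a locally solvable ODE, and the core of the proof is a uniform weighted energy estimate for the approximations $w_N$. Testing with $w_N$ and using the cancellation $b(w,w,w)=0$ built into \eqref{1.7} for divergence-free fields gives
\begin{equation*}
    \tfrac12\tfrac{d}{dt}\|w_N\|_{L^2}^2 + \|\Lambda^\al w_N\|_{L^2}^2 = -\langle B(w_N,h^\om)+B(h^\om,w_N)+B(h^\om,h^\om),\, w_N\rangle ,
\end{equation*}
after which I would bound the right-hand side using the bilinear estimate \eqref{EE} together with the probabilistic $L^r_tL^q_x$ bounds on $h^\om$, absorbing each occurrence of $\|\Lambda^\al w_N\|_{L^2}^2$ into the dissipation by Young's inequality and closing with a weighted Gronwall argument. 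The weight $(1\wedge t)^{\ga+\frac{1}{2\al}-\frac12}$ in \eqref{RH1} is exactly the one that renders the singular-in-time forcing from $h^\om$ integrable near $t=0$.

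With uniform bounds on $(1\wedge t)^{\ga+\frac{1}{2\al}-\frac12}w_N$ in $L^\infty_tL^2\cap L^2_t\Dot{H}^\al$ and on $\frac{dw_N}{dt}$ in $L^1(0,T;\mathbb{V}')$ in hand, I would extract a weak-$*$ convergent subsequence and upgrade to strong convergence in an intermediate space by the Aubin--Lions--Simon lemma, which is enough to pass to the limit in the quadratic and cross terms and to verify the weak formulation of Definition \ref{DW}. The continuity assertions \eqref{RH1}--\eqref{RH2} and the attainment of the datum $\lim_{t\to0^+}\|t^{\mu_s}u(t)-u_0^\om\|_{\Dot{H}^s}=0$ would then follow by interpolating the energy bounds against the equation and using the definition \eqref{Q} of $\mu_s$, the exponent $2\al(\mu_s-\ga)+\al-1$ in \eqref{RH2} being produced by matching the $\Dot{H}^\al$--$L^2$ scaling of $w$ against the temporal weight.

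The main obstacle, I expect, is closing the weighted energy estimate: the cross terms $B(w,h^\om)$ and $B(h^\om,w)$ pair a field $w$ of only $\Dot{H}^\al$ regularity with the linear evolution $h^\om$, which is genuinely rough (it starts in $\Dot{H}^s$ with $s<0$) and singular as $t\to0^+$. Controlling these requires the full strength of the probabilistic space--time bounds on $h^\om$ rather than any deterministic regularity, and the interplay of exponents is what forces precisely the restrictions $\al\in(\tfrac23,1]$ and $s>1-2\al$: one must check that the integrability gained from randomization, after paying for the temporal weight in \eqref{EE}, still dominates the loss incurred by the negative-order datum. Verifying that this budget balances --- uniformly in the Galerkin level and down to $t=0$ --- is the delicate heart of the argument.
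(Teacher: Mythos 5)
Your decomposition $u=h^\om+w$, the probabilistic space--time bounds on $h^\om$ on a full-measure set, and the Galerkin energy method for $w$ are all present in the paper, but the single step you yourself flag as ``the delicate heart'' --- closing the weighted energy estimate uniformly down to $t=0$ --- is precisely the step that does not work, and the paper's proof is structured specifically to avoid it. If you test the Galerkin system with $w_N$ and multiply by the weight $t^{2\beta}$ with $\beta=\ga+\frac{1}{2\al}-\frac12>0$, the time derivative of the weight produces the term $\beta t^{2\beta-1}\|w_N\|_{L^2}^2$ on the wrong side of the inequality; since $2\beta-1<0$ this factor is not integrable at $t=0$, the Gronwall factor $\exp\left(\int_0^t 2\beta s^{-1}\,ds\right)$ diverges, and the estimate cannot be closed. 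This is not a technicality: in the range of $s$ where $\beta>0$ the solution $w$ genuinely leaves the unweighted energy space as $t\to0^+$ (the paper's Remark~1.3(1) makes this explicit), so no Gronwall argument starting from $w_N(0)=0$ can produce the bound $\|w_N(t)\|_{L^2}\lesssim t^{-\beta}$; that decay rate has to come from the Duhamel representation and the smoothing of $e^{-t(-\Delta)^\al}$, not from an energy identity. The same obstruction affects the forcing term $\langle B(h^\om,h^\om),w_N\rangle$, whose time singularity near $0$ cannot be absorbed into the dissipation without already knowing quantitative decay of $w_N$ at $0$.

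The paper's actual route is a two-region construction: near $t=0$ it builds a \emph{mild} solution $w_1$ on $[0,\tau_\om]$ by a fixed-point argument in the space $Y_{\tau_\om}$ of \eqref{Y}, exploiting that $\|h^\om\|_{Y_{\tau_\om}}$ can be made small by shrinking $\tau_\om$ (Proposition~\ref{MS}); the weighted bounds $t^{\beta}w_1\in C([0,\tau_\om];L^2)$ and the continuity \eqref{RH2} are then read off \emph{a posteriori} from the integral formulation via the semigroup estimates (Propositions~\ref{Prop} and~\ref{WSID2}), not from an energy inequality. Only for $t\ge\tau_\om/2$, where $w_1(\tau_\om/2)\in L^2$ serves as a regular initial datum, does the paper run the Galerkin/Aubin--Lions argument you describe. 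The two pieces are then glued by a weak--strong uniqueness theorem (Theorem~\ref{UR}, proved in Appendix~B), which shows $w_1=w_2$ on the overlap $[\tau_\om/2,\tau_\om]$; this uniqueness step is entirely absent from your proposal but is indispensable once the construction is split in two. Finally, the attainment of the datum, $\lim_{t\to0^+}\|t^{\mu_s}w(t)\|_{\Dot H^s}=0$, also comes from the mild formulation rather than from interpolating energy bounds, since the energy bounds alone give no continuity information at $t=0$. In short: your outline is the right first attempt and correctly locates the difficulty, but it is missing the key idea (local-in-time contraction in $Y_{\tau_\om}$ plus weak--strong uniqueness gluing) that the paper uses to get around the failure of the global weighted energy method.
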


\begin{remark}
(1) The ranges of $\al\in (\frac{2}{3}, 1]$ and $s\in (1-2\al, 0)$ are determined as follows: As shown in \cite{ZF1}, the problem \eqref{DE} of $w=u-h^\om$ admits a local mild solution $w$ in $L^a\left(0,\tau;L^p(\mathbb{T}^d)\right)$ for some $\tau>0$, with $a$ and $p$ satisfying
\begin{equation}\label{LM}
    \frac{2\al}{a} + \frac{d}{p} = 2\al - 1.
\end{equation}
However, the solution obtained above doesn't lie in the energy space
\begin{equation}\label{ES}
    L^\infty \left( 0, \tau ; L^2(\mathbb{T}^d) \right) \cap L^2 \left( 0, \tau ; \Dot{H}^\al(\mathbb{T}^d) \right),
\end{equation}
but one shall see that $t^{\ga+\frac{1}{2\al}-\frac{1}{2}} w$ belongs to the space \eqref{ES} when $\tau$ is small, with
$
    \ga = \left( -\frac{1}{2}-\frac{s}{\al} \right)_+ 
$.
Without loss of generality, we take $a=\frac{4}{1+2\ga}$ and $p=\frac{2d}{(3-2\ga)\al - 2}$ in \eqref{LM} for simplicity. To ensure that $p$ makes sense, one needs $\ga < \frac{3}{2} - \frac{1}{\al}$, combining $\ga \geq 0$ implies $\al > \frac{2}{3}$, the case $\al > 1$ is relatively easy, so we consider $\al \in \left(\frac{2}{3},1 \right]$ throughout this paper. The range $s\in (1-2\al, 0)$ comes from $\ga < \frac{3}{2} - \frac{1}{\al}$ as well. 

(2)    When $\al = 1$, we recover the result obtained in \cite{DZ}.
\end{remark}

It is worth to emphasize that the randomization $u_0^\om$ dose not gain any $\Dot{H}^s(\mathbb{T}^d)$ regularization, that is $u_0^\om \notin \Dot{H}^{s+\epsilon}(\mathbb{T}^d)$ for any $\epsilon > 0$, see \cite[Lemma B.1]{BT}. Thus we cannot directly use the standard energy method to obtain the existence of weak solutions to the problem \eqref{GNS1} with the initial data $u_0^\om$. 
The idea for proving Theorem \ref{TW} is inspired from \cite{NPS}.
Thanks to the randomization $u_0^\om$, we have the bounds of  several space-time norms of $h^\om$ for almost sure, see Proposition \ref{PH}. Therefore, there exists $\tau_\om \in (0,1)$ small enough such that the $Y_{\tau_\om}$ (c.f. \eqref{Y}) norm of $h^\om$ is sufficiently small, using this smallness and a fixed point argument for the problem of $w=u-h^\om$ we obtain a mild solution $w_1$ on the time interval $[0,\tau_\om]$. When $t>0$, the singularity of $w_1$ disappears as one may verify that $w_1(t)$ belongs to $L^2(\mathbb{T}^d)$. Then, starting from $\frac{\tau_\om}{2}$, we get a weak solution $w_2$ on $\left[ \frac{\tau_\om}{2},T \right]$ for any $T>1$ by using the standard energy method. Finally, by using a weak-strong type uniqueness result given in Theorem \ref{UR}, we deduce that $w_1=w_2$ on the time interval $\left[ \frac{\tau_\om}{2},\tau_\om \right]$, from which we obtain the existence of a weak solution $w$ on $[0,T]$.

The remainder of this paper is organized as follows. In Section 2, we give several notations and lemmas of the solution spaces, and estimates of the generalized heat flow $h^\om$. In Section 3, we obtain the existence of global weak solutions to the problem of the difference $w=u-h^\om$, and conclude the main result stated in Theorem \ref{TW}. In Appendix A, we extend the bilinear form $B(\cdot,\cdot)$ defined in \eqref{EE} to more general function spaces, and in Appendix B, we give a weak-strong type uniqueness result for the problem of $w$.

Throughout this paper, the notation $A \lesssim B$ denotes $A \leq CB$ for some positive constant $C$, and the constant $C$ may change from line to line.

\section{Preliminary}

\subsection{Notations and several lemmas}

For any $\be \in \mathbb{R}$ and $1 \leq r \leq \infty$, define the homogeneous Sobolev spaces by
\begin{equation}
    \Dot{W}^{\be,r}(\mathbb{T}^d) := \{ f \in \mathcal{S}' : \Lambda^\be f \in L^r(\mathbb{T}^d) \},
\end{equation}
endowed with the norm $||f||_{\Dot{W}_x^{\be,r}} := ||\Lambda^\be f||_{L_x^r}$, where $\mathcal{S}$ is the Schwartz space and we use the notation $\Lambda = (-\Delta)^{\frac{1}{2}}$ for convenience. 

The following "product rule" for fractional order derivative is needed.

\begin{lemma}\label{PR}\cite[Lemma 3.1]{J}
    Let $\nu>0$ and $p, r_1, r_2 \in (1,\infty)$ satisfying
    \begin{equation}
        \frac{1}{p} = \frac{1}{r_1} + \frac{1}{q_1} = \frac{1}{r_2} + \frac{1}{q_2},
    \end{equation}
    then it holds that
    \begin{equation}
        ||\Lambda^\nu (\phi \psi)||_{L^p(\mathbb{T}^d)} \leq C \left( ||\phi||_{L^{q_1}(\mathbb{T}^d)} ||\Lambda^\nu \psi ||_{L^{r_1}(\mathbb{T}^d)} +||\psi||_{L^{q_2}(\mathbb{T}^d)} ||\Lambda^\nu \phi||_{L^{r_2}(\mathbb{T}^d)} \right).
    \end{equation}
\end{lemma}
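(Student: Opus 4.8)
The plan is to prove this Kato--Ponce type fractional Leibniz inequality by Littlewood--Paley theory combined with Bony's paraproduct decomposition. First I would fix a dyadic decomposition $\mathrm{Id}=\sum_{j\ge 0}\Delta_j$ adapted to the frequency lattice $\mathbb{Z}^d$; on $\mathbb{T}^d$ every nonzero mode obeys $|k|\ge 1$, so only $j\ge 0$ occur and the zero mode is annihilated by $\Lambda^\nu$ and thus harmless. Writing $S_j=\sum_{k<j}\Delta_k$, the two Fourier-analytic facts I would invoke are the Bernstein property, namely that for a function with Fourier support in $\{|\xi|\sim 2^j\}$ one has $\Lambda^\nu\Delta_j f\sim 2^{j\nu}\Delta_j f$, and the Littlewood--Paley square-function characterization $\|\Lambda^\nu f\|_{L^p}\simeq \big\|(\sum_j 2^{2j\nu}|\Delta_j f|^2)^{1/2}\big\|_{L^p}$, both valid precisely for $1<p<\infty$.

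Next I would split the product by Bony's formula as $\phi\psi=\Pi_1+\Pi_2+\Pi_3$, where $\Pi_1=\sum_j S_{j-2}\phi\,\Delta_j\psi$, $\Pi_2=\sum_j \Delta_j\phi\,S_{j-2}\psi$, and $\Pi_3=\sum_{|j-k|\le 1}\Delta_j\phi\,\Delta_k\psi$. For the paraproduct $\Pi_1$ each summand has Fourier support in an annulus of size $2^j$, so it plays the role of the $j$-th Littlewood--Paley piece of $\Lambda^\nu\Pi_1$; applying the square-function bound, the pointwise estimate $\sup_j|S_{j-2}\phi|\lesssim M\phi$ (with $M$ the Hardy--Littlewood maximal operator), Hölder's inequality with $\frac1p=\frac1{q_1}+\frac1{r_1}$, and the $L^{q_1}$ boundedness of $M$, I obtain $\|\Lambda^\nu\Pi_1\|_{L^p}\lesssim \|\phi\|_{L^{q_1}}\|\Lambda^\nu\psi\|_{L^{r_1}}$. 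The term $\Pi_2$ is handled symmetrically with the roles of $\phi$ and $\psi$ exchanged, giving the contribution $\|\psi\|_{L^{q_2}}\|\Lambda^\nu\phi\|_{L^{r_2}}$.

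The main obstacle is the remainder $\Pi_3$, since the summand $\Delta_j\phi\,\Delta_k\psi$ has Fourier support only in a \emph{ball} $\{|\xi|\lesssim 2^j\}$ rather than in an annulus, so Bernstein cannot be applied term by term. I would regroup by the \emph{output} frequency: applying $\Delta_l$ annihilates all but the terms with $j\ge l-C$, and then I would write $2^{l\nu}=2^{(l-j)\nu}\,2^{j\nu}$. The decisive point is that $\nu>0$, which makes $\sum_{m\ge -C}2^{-m\nu}$ a convergent geometric series; a Schur/Minkowski summation then transfers the $\ell^2_l$ norm into an $\ell^2_j$ norm, yielding
\[
\Big\|\Big(\sum_l 2^{2l\nu}|\Delta_l\Pi_3|^2\Big)^{1/2}\Big\|_{L^p}\lesssim \Big\|\Big(\sum_j 2^{2j\nu}|\Delta_j\phi|^2\,|\hat\Delta_j\psi|^2\Big)^{1/2}\Big\|_{L^p},
\]
where $\hat\Delta_j$ denotes the fattened projection $\sum_{|k-j|\le 1}\Delta_k$. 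After this, the same $\sup_j|\Delta_j\phi|\lesssim M\phi$ together with Hölder bounds the right-hand side by $\|\phi\|_{L^{q_1}}\|\Lambda^\nu\psi\|_{L^{r_1}}$. Summing the three contributions $\Pi_1,\Pi_2,\Pi_3$ produces the stated inequality.

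Finally I would record the role of the hypotheses. The constraints $1<p,q_1,q_2,r_1,r_2<\infty$ enter exactly through the square-function equivalence and the $L^{q_i}$ boundedness of $M$, both of which fail at the endpoints; and $\nu>0$ is used only to sum the geometric series in the remainder step. I expect that step, the frequency regrouping for the high--high interactions, to be the only genuinely delicate part, the paraproduct estimates being routine applications of Hölder and the maximal inequality.
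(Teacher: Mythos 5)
The paper contains no proof of this lemma at all: it is quoted verbatim from Ju \cite[Lemma 3.1]{J}, where it is established by exactly the Littlewood--Paley/Bony paraproduct technology you deploy, so your route coincides with that of the cited source and your sketch is correct, including the correct identification of where each hypothesis ($1<p,r_i<\infty$ for the square-function equivalence and maximal bounds, $\nu>0$ for the geometric series in the high--high term) enters. The only detail left implicit is that passing the projection $\Delta_l$ inside the square function --- both when resumming the annularly supported paraproduct pieces of $\Pi_1,\Pi_2$ and in your regrouping of $\Pi_3$, where $\Delta_l$ is an operator rather than a pointwise multiplier --- requires the pointwise bound $|\Delta_l F|\lesssim MF$ combined with the Fefferman--Stein vector-valued maximal inequality, which is available precisely because $1<p<\infty$; this is routine and does not constitute a gap.
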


We recall the following space-time estimate for the generalized heat flow.

\begin{lemma}\label{PQ}\cite[Lemma 3.1]{MYZ}
    Let $1\leq q \leq p \leq \infty$ and $\phi \in L^q(\mathbb{T}^d)$, then for any $\nu \geq 0$, it holds that
    \begin{equation}
        ||\Lambda^\nu e^{-t(-\Delta)^\al} \phi||_{L^p(\mathbb{T}^d)} \leq C t^{-\frac{\nu}{2\al} - \frac{d}{2\al}\left(\frac{1}{q}-\frac{1}{p}\right)}||\phi||_{L^q(\mathbb{T}^d)}.
    \end{equation}
\end{lemma}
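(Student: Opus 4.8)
The plan is to exploit that $\Lambda^\nu e^{-t(-\Delta)^\al}$ is a Fourier multiplier on $\mathbb{T}^d$, hence acts as convolution with the periodic kernel
\[
    K_t(x) = \sum_{k \in \mathbb{Z}^d} |k|^\nu e^{-t|k|^{2\al}}\, e^{ik\cdot x}.
\]
By Young's convolution inequality, $\|\Lambda^\nu e^{-t(-\Delta)^\al}\phi\|_{L^p} \le \|K_t\|_{L^r}\|\phi\|_{L^q}$ whenever $1+\tfrac1p = \tfrac1r + \tfrac1q$, that is $\tfrac{1}{r'} = \tfrac1q - \tfrac1p$. Thus the whole statement reduces to the single kernel bound
\[
    \|K_t\|_{L^r(\mathbb{T}^d)} \lesssim t^{-\frac{\nu}{2\al} - \frac{d}{2\al}\cdot\frac{1}{r'}}, \qquad 1 \le r \le \infty .
\]

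Next I would collapse the family of exponents $r$ onto its two endpoints by log-convexity (Lyapunov interpolation) of the $L^r$ norms of the \emph{fixed} function $K_t$: writing $\tfrac1r = 1-\theta$ with $\theta = \tfrac{1}{r'}$, one has $\|K_t\|_{L^r} \le \|K_t\|_{L^1}^{1-\theta}\|K_t\|_{L^\infty}^{\theta}$. Hence it suffices to prove the two endpoint estimates
\[
    \|K_t\|_{L^1(\mathbb{T}^d)} \lesssim t^{-\frac{\nu}{2\al}}, \qquad \|K_t\|_{L^\infty(\mathbb{T}^d)} \lesssim t^{-\frac{\nu}{2\al}-\frac{d}{2\al}},
\]
and the weighted geometric mean of these returns precisely the exponent $-\tfrac{\nu}{2\al}-\tfrac{d}{2\al}\tfrac{1}{r'}$ claimed above.

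The core of both endpoints is comparison with the Euclidean fractional kernel $G_t = \mathcal{F}^{-1}_{\mathbb{R}^d}\{|\xi|^\nu e^{-t|\xi|^{2\al}}\}$, which enjoys the exact scaling $G_t(x) = t^{-\frac{d+\nu}{2\al}} G_1(t^{-\frac{1}{2\al}}x)$, so that $\|G_t\|_{L^1(\mathbb{R}^d)} = t^{-\frac{\nu}{2\al}}\|G_1\|_{L^1}$. For the $L^\infty$ endpoint I would bound $\|K_t\|_{L^\infty} \le \sum_{k}|k|^\nu e^{-t|k|^{2\al}}$ and compare this sum with $\int_{\mathbb{R}^d}|\xi|^\nu e^{-t|\xi|^{2\al}}\,d\xi = t^{-\frac{d+\nu}{2\al}}\int |\eta|^\nu e^{-|\eta|^{2\al}}\,d\eta$, splitting the modes into $|k|\lesssim t^{-1/2\al}$ (Riemann-sum comparison) and $|k|\gtrsim t^{-1/2\al}$ (where the exponential factor absorbs the high frequencies). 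For the $L^1$ endpoint I would invoke Poisson summation to realize $K_t$ as a constant multiple of the $2\pi\mathbb{Z}^d$-periodization of $G_t$, whence $\|K_t\|_{L^1(\mathbb{T}^d)} \lesssim \|G_t\|_{L^1(\mathbb{R}^d)} \lesssim t^{-\nu/2\al}$.

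The hard part will be the $L^1$ endpoint, because it rests on the integrability $\|G_1\|_{L^1(\mathbb{R}^d)}<\infty$, i.e. on genuine pointwise decay of $G_1 = \mathcal{F}^{-1}\{|\xi|^\nu e^{-|\xi|^{2\al}}\}$. Unlike the case $\al=1$, this kernel is not Gaussian: for $\al\in(\tfrac23,1)$ it is a stable-type density with only polynomial decay, of the order $|G_1(x)| \lesssim (1+|x|)^{-(d+2\al)}$, with an additional loss coming from the non-smoothness of $|\xi|^\nu$ at the origin. Establishing this decay — for instance by splitting the frequency integral into $|\xi|\le 1$ and $|\xi|\ge 1$ and integrating by parts away from $\xi=0$ — is the one place where the fractional order genuinely enters. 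I would also flag that the scaling argument governs the small-time behaviour (consistent with the $1\land t$ weights used later in the paper); for large $t$ the relevant decay is driven instead by the spectral gap of $(-\Delta)^\al$ on mean-zero fields, which I would record as a separate remark.
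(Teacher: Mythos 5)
The paper offers no proof of this lemma at all: it is quoted from \cite[Lemma 3.1]{MYZ}, whose argument on $\mathbb{R}^d$ is precisely the Euclidean core of your proposal --- realize $\Lambda^\nu e^{-t(-\Delta)^\al}$ as convolution with $G_t=\mathcal{F}^{-1}\{|\xi|^\nu e^{-t|\xi|^{2\al}}\}$, use the scaling $G_t(x)=t^{-\frac{d+\nu}{2\al}}G_1(t^{-\frac{1}{2\al}}x)$ together with $G_1\in L^r(\mathbb{R}^d)$, and apply Young's inequality. What you add, correctly, is the transference to $\mathbb{T}^d$: Poisson summation to periodize $G_t$ for the $L^1$ endpoint, a Riemann-sum comparison for the $L^\infty$ endpoint, and Lyapunov interpolation in between. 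Your exponent bookkeeping is right ($\frac{1}{r'}=\frac{1}{q}-\frac{1}{p}$, $\theta=\frac{1}{r'}$, and the geometric mean of the endpoint rates reproduces $t^{-\frac{\nu}{2\al}-\frac{d}{2\al}(\frac1q-\frac1p)}$), and $G_1\in L^1(\mathbb{R}^d)$ does hold for every $\al>0$ and $\nu\geq 0$, so the skeleton is sound. Your route is more self-contained than the paper's citation, at the price of needing genuine pointwise decay of the stable-type kernel; the paper simply outsources this.

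Two points should be tightened. First, the decay rate: when $0<\nu<2\al$ and $\nu$ is not an even integer, the $|\xi|^\nu$ singularity at the origin dominates the non-smoothness of $e^{-|\xi|^{2\al}}$ and yields only $|G_1(x)|\lesssim (1+|x|)^{-(d+\nu)}$, not $(1+|x|)^{-(d+2\al)}$; this is still integrable, which is all you use, but the rate you state is not uniform in $\nu$. Second, and more substantively, the zero mode: for $\nu=0$ and $q<p$ the estimate as literally stated is \emph{false} on $\mathbb{T}^d$ for large $t$ --- take $\phi\equiv 1$, so that the left-hand side is constant in $t$ while the right-hand side tends to $0$. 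Your own endpoint bounds detect this, since for $\nu=0$ the $k=0$ term contributes the constant $1$ to $K_t$ and $\|K_t\|_{L^r}$ does not decay. The lemma must therefore be read either for $t$ in a bounded interval (which covers every use made of it in the paper, where $T<\infty$ is fixed) or on mean-zero data, which is the paper's actual setting: $\mathbb{H}^s$ and the basis $\{e_k^j\}_{k\in\mathbb{Z}_*^d}$ carry no $k=0$ mode, and $B(f,g)$ is mean-zero, so the sums run over $k\neq 0$ and one gets $\sum_{k\neq 0}|k|^\nu e^{-t|k|^{2\al}}\lesssim t^{-\frac{d+\nu}{2\al}}$ for $t\leq 1$ and $\lesssim e^{-ct}$ for $t\geq 1$ via the spectral gap. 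You flagged exactly this in your closing remark; to make the proof complete, that remark should be promoted into the argument (restrict the mode sums to $\mathbb{Z}^d\setminus\{0\}$, or restrict $t$), rather than left as an aside.
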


\begin{lemma}\label{EY}\cite[Theorem 4 in Appendix]{S}
    Assume $0<\tau<1$ and $1<p<q<\infty$ satisfying
    $$1-(\frac{1}{p}-\frac{1}{q})=\tau,$$
    define
    $$(I_\tau f)(t)=\int_{-\infty}^\infty f(s)|t-s|^{-\tau} ds, $$
    then there exists a positive constant C depending only on $p,q$ and $\tau$ such that
    $$||I_\tau f||_{L^q(\mathbb{R})} \leq C ||f||_{L^p(\mathbb{R})}.$$
\end{lemma}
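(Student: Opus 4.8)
The statement is exactly the one-dimensional Hardy--Littlewood--Sobolev inequality, so the plan is to reduce it to the boundedness of the Hardy--Littlewood maximal operator by establishing a pointwise Hedberg-type bound. Write $Mf(t)=\sup_{\rho>0}\frac{1}{2\rho}\int_{t-\rho}^{t+\rho}|f(s)|\,ds$ for the maximal function. The goal is to prove the pointwise estimate
$$|I_\tau f(t)| \le C\,(Mf(t))^{p/q}\,\|f\|_{L^p(\mathbb{R})}^{1-p/q},$$
after which raising to the $q$-th power, integrating, and invoking $\|Mf\|_{L^p}\le C_p\|f\|_{L^p}$ (valid since $p>1$) yields $\|I_\tau f\|_{L^q}^q \le C\|f\|_{L^p}^{q-p}\|Mf\|_{L^p}^{p}\le C'\|f\|_{L^p}^{q}$, which is the claim.

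To prove the pointwise bound I would split the kernel at an arbitrary radius $R>0$, writing $I_\tau f(t)=I_1+I_2$ with $I_1=\int_{|t-s|<R}f(s)|t-s|^{-\tau}\,ds$ and $I_2=\int_{|t-s|\ge R}f(s)|t-s|^{-\tau}\,ds$. For the near part $I_1$, I would decompose the interval $|t-s|<R$ into the dyadic annuli $2^{-k-1}R\le|t-s|<2^{-k}R$ for $k\ge0$, bound $|t-s|^{-\tau}$ by $(2^{-k-1}R)^{-\tau}$ on each annulus, and use $\int_{|t-s|<2^{-k}R}|f(s)|\,ds\le 2\cdot2^{-k}R\,Mf(t)$. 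Summing the resulting geometric series, which converges because $1-\tau>0$, gives $|I_1|\le C\,R^{1-\tau}Mf(t)$. For the far part $I_2$, I would apply Hölder's inequality with exponents $p$ and $p'$: since the scaling relation $\frac1p-\frac1q=1-\tau$ forces $\tau p'>1$, the tail integral $\int_{|t-s|\ge R}|t-s|^{-\tau p'}\,ds$ converges and equals $CR^{1-\tau p'}$, so that $|I_2|\le C\,\|f\|_{L^p}\,R^{1/p'-\tau}=C\,\|f\|_{L^p}\,R^{-1/q}$, where the last equality uses $1/p'-\tau=-1/q$.

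Combining the two bounds gives $|I_\tau f(t)|\le C\big(R^{1-\tau}Mf(t)+R^{-1/q}\|f\|_{L^p}\big)$ for every $R>0$, and it remains to optimize in $R$. Choosing $R$ so as to balance the two terms, i.e. $R^{(1-\tau)+1/q}\sim\|f\|_{L^p}/Mf(t)$, and noting $(1-\tau)+1/q=1/p$, produces exactly the Hedberg bound displayed above; the exponents $p/q$ and $1-p/q$ emerge from the identities $p(1-\tau)=1-p/q$ and $1-p(1-\tau)=p/q$. If $Mf(t)=0$ then $f$ vanishes a.e. near $t$ and the estimate is trivial, so the division is harmless.

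The main obstacle is not any single hard inequality but making the bookkeeping of exponents consistent: one must check that the scaling relation $1-(\frac1p-\frac1q)=\tau$ simultaneously guarantees convergence of the near sum ($1-\tau>0$, automatic since $\tau<1$) and of the far integral ($\tau p'>1$, equivalent to $1/q>0$), and that the optimization exponent $(1-\tau)+1/q$ collapses to $1/p$. The only genuinely external input is the Hardy--Littlewood maximal theorem on $\mathbb{R}$, which supplies the final $L^p$ bound; everything else is elementary. An alternative route would be to establish the weak-type $(p,q)$ estimate for $I_\tau$ directly and then apply Marcinkiewicz interpolation between two admissible exponent pairs satisfying the same scaling relation, but the Hedberg argument is shorter and avoids the interpolation machinery entirely.
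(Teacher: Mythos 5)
Your proof is correct and complete. Note first that the paper does not prove this lemma at all: it is quoted verbatim from Sogge's book (Theorem 4 of the Appendix there), so there is no internal proof to compare against, and a self-contained argument like yours is strictly more than the paper supplies. Your route is the classical Hedberg pointwise estimate $|I_\tau f(t)|\le C\,(Mf(t))^{p/q}\|f\|_{L^p}^{1-p/q}$ followed by the Hardy--Littlewood maximal theorem, and the exponent bookkeeping checks out at every step: the dyadic sum for the near part converges because $1-\tau>0$; the tail condition $\tau p'>1$ is equivalent, under the scaling relation $\frac1p-\frac1q=1-\tau$, to $\frac1q>0$ and hence automatic; the identity $\frac1{p'}-\tau=-\frac1q$ gives $|I_2|\le C\|f\|_{L^p}R^{-1/q}$; and the optimization exponent $(1-\tau)+\frac1q=\frac1p$ together with $p(1-\tau)=1-\frac{p}{q}$ produces exactly the claimed interpolation of $Mf(t)$ and $\|f\|_{L^p}$. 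The degenerate cases are also harmless, as you note: $Mf(t)=0$ forces $f=0$ a.e., $Mf(t)<\infty$ a.e.\ for $f\in L^p$, and the estimate is trivial when $\|f\|_{L^p}=0$. Your remark about the alternative route (weak-type $(p,q)$ bounds plus Marcinkiewicz interpolation) is apt --- that is essentially the argument in the standard references for the fractional integration theorem, including the cited source --- but the Hedberg argument you chose is shorter, needs only the maximal theorem as external input, and yields the constant's dependence on $p,q,\tau$ transparently. One small stylistic point: it is worth stating explicitly that $|I_\tau f(t)|\le I_\tau|f|(t)$, so that one may assume $f\ge0$ throughout; your estimates implicitly do this when bounding $I_1$ and $I_2$ by integrals of $|f|$, which is fine.
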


As we knew, the standard heat kernel $e^{t\Delta}$ satisfies the maximal regularity estimate \cite[Theorem 7.3]{LR}. The following one   is also true for the generalized heat equation, one also can see \cite[pp.1654-1655,1657]{HP} for the detail.

\begin{lemma}\cite[Lemma 2.2]{D}\label{MR}
    Assume that $\al>0$, $T>0$ and $q,r \in (1,\infty)$, then the operator 
    \begin{equation}
        F \mapsto \int_0^t e^{-(t-s)(-\Delta)^{\al}} \Lambda^{2\al} F(s,x) ds,
    \end{equation}
    is bounded from $L_T^q L^r(\mathbb{T}^d)$ to
    $L_T^q L^r(\mathbb{T}^d)$, with $L_T^q L^r(\mathbb{T}^d)$ denoting $L^q(0,T; L^r(\mathbb{T}^d))$ for simplicity.
\end{lemma}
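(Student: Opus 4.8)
The plan is to recognize the operator as the maximal-regularity operator associated to the sectorial operator $A=(-\Delta)^\al$ on $X=L^r(\mathbb{T}^d)$, and to prove the bound via the operator-valued Fourier-multiplier theorem of Weis, after reducing to a convolution on the whole time line. First I would extend: given $F\in L^q(0,T;X)$, extend by $0$ to $\tilde F\in L^q(\mathbb{R};X)$ with equal norm. Since the operator-valued kernel $K(\sigma)=\mathbf 1_{\{\sigma>0\}}\Lambda^{2\al}e^{-\sigma(-\Delta)^\al}=\mathbf 1_{\{\sigma>0\}}Ae^{-\sigma A}$ is supported in $\sigma>0$ and $\tilde F$ vanishes for $s<0$, for $t\in(0,T)$ one has $\int_{\mathbb{R}}K(t-s)\tilde F(s)\,ds=\int_0^tK(t-s)F(s)\,ds=(TF)(t)$. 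Hence it suffices to bound the whole-line convolution $\tilde TG=K*G$ on $L^q(\mathbb{R};X)$ (the resulting constant is then even independent of $T$). Taking the Fourier transform in $t$ turns $\tilde T$ into multiplication by the operator-valued symbol
$$M(\rho)=\int_0^\infty e^{-i\rho\sigma}Ae^{-\sigma A}\,d\sigma=A(A+i\rho)^{-1},\qquad\rho\in\mathbb{R}.$$
The zero $x$-frequency is annihilated by $A$, so $A$ may be viewed as an invertible positive operator on the mean-zero part of $X$, and $TF$ always has zero spatial mean.

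The scalar heuristic both motivates the approach and exposes its delicacy: on the $x$-frequency $k$ the kernel is $\la e^{-\la\sigma}\mathbf 1_{\{\sigma>0\}}$ with $\la=|k|^{2\al}>0$, which is $L^1$-normalized uniformly in $\la$, so Young's inequality gives the scalar $L^q_t$ bound with constant $1$ mode by mode; for $r=2$ this already yields the $L^2_tL^2_x$ bound by Plancherel, since $\sup_{\rho,\la}|\la/(\la+i\rho)|\le1$. For $r\neq 2$, however, the spatial $L^r$ norm couples the modes, and the mixed norm with $q\neq r$ is exactly what obstructs a single scalar multiplier argument. I would therefore invoke the operator-valued Mikhlin multiplier theorem: if $X$ is a UMD space and the families $\{M(\rho):\rho\neq0\}$ and $\{\rho M'(\rho):\rho\neq0\}$ are $\mathcal R$-bounded in $\mathcal B(X)$, then $M$ defines a bounded Fourier multiplier on $L^q(\mathbb{R};X)$ for every $1<q<\infty$, which is precisely the assertion of the lemma.

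It then remains to verify the two hypotheses. The space $L^r(\mathbb{T}^d)$ is UMD for $1<r<\infty$. For the $\mathcal R$-bounds I would show that $A=(-\Delta)^\al$ is $\mathcal R$-sectorial of angle $<\pi/2$ on the mean-zero part of $L^r(\mathbb{T}^d)$. This follows because $-\Delta$ admits a bounded $H^\infty(\Si_\theta)$ functional calculus on $L^r(\mathbb{T}^d)$ for every $\theta>0$ (a consequence of the Gaussian heat-kernel bounds together with transference from $\mathbb{R}^d$), so by the composition rule for fractional powers $A=(-\Delta)^\al$ has a bounded $H^\infty$-calculus of angle $\le\al\theta$, which can be made $<\pi/2$; on a UMD space a bounded $H^\infty$-calculus implies $\mathcal R$-sectoriality of the same angle. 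Writing $M(\rho)=I-i\rho(A+i\rho)^{-1}$ and $\rho M'(\rho)=M(\rho)\big(-i\rho(A+i\rho)^{-1}\big)$, the $\mathcal R$-sectoriality applied along the imaginary axis (which lies in the sector because the angle is $<\pi/2$) shows both families are $\mathcal R$-bounded, and the argument closes.

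The main obstacle is the last step: establishing that the fractional Laplacian on $L^r(\mathbb{T}^d)$ is $\mathcal R$-sectorial, equivalently has a bounded $H^\infty$-calculus, of angle strictly less than $\pi/2$; everything else is formal once this is in hand. One could instead attempt a direct vector-valued Calderón–Zygmund argument for the kernel $K$, but starting from the easy $L^2_tL^2_x$ bound such an argument only transfers the time exponent while keeping the spatial exponent fixed at $2$, so reaching general $L^q_tL^r_x$ with $q\neq r$ is exactly what forces the operator-valued, UMD-valued machinery.
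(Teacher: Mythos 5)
Your proposal is correct, but it proves the lemma by a genuinely different route than the paper, which in fact offers no proof of its own: Lemma \ref{MR} is quoted from \cite[Lemma 2.2]{D}, with the reader pointed to Hieber--Pr\"uss \cite{HP} for details. That classical route treats $TF(t)=\int_0^t Ae^{-(t-s)A}F\,ds$ (with $A=(-\Delta)^{\alpha}$, $\Lambda^{2\alpha}=A$) as a singular integral with operator-valued kernel: one starts from the $L^2_tL^2_x$ bound (Plancherel/de Simon), uses the semigroup's kernel bounds to run Calder\'on--Zygmund theory \emph{jointly in space-time} with the parabolic metric to reach the diagonal case $L^r_tL^r_x$, and only then applies a Benedek--Calder\'on--Panzone vector-valued argument in time --- exactly the ``transfers the time exponent, keeps the spatial exponent fixed'' step you describe, but anchored at $L^r_x$ rather than $L^2_x$ --- to obtain all $L^q_tL^r_x$. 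So your closing objection to the direct vector-valued CZ argument is right as far as it goes, but it slightly undersells the classical proof, which repairs that defect with the intermediate space-time CZ step. Your alternative via Weis's operator-valued Mikhlin theorem (UMD-ness of $L^r$, symbol $M(\rho)=A(A+i\rho)^{-1}$ on the mean-zero subspace, $\mathcal R$-sectoriality of angle $<\pi/2$ from the bounded $H^\infty$-calculus) is complete modulo standard cited theorems, delivers all $1<q,r<\infty$ in one stroke, and yields a $T$-independent constant, which the statement does not even claim; the kernel-bounds route, by contrast, needs no UMD/$\mathcal R$-boundedness machinery and predates it.

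Two minor points in your verification step. First, the parenthetical deriving the arbitrarily-small-angle $H^\infty$-calculus of $-\Delta$ ``from Gaussian heat-kernel bounds together with transference'' is slightly off: Gaussian bounds by themselves only give angle $<\pi/2$, which after composition $A=(-\Delta)^{\alpha}$ would not suffice for large $\alpha$ (the lemma allows any $\alpha>0$). The clean elementary argument is that for $f\in H^\infty(\Sigma_\theta)$ the Cauchy estimates give $|t^jf^{(j)}(t)|\lesssim\|f\|_\infty$, so $f(|\xi|^2)$ is a Mikhlin--H\"ormander symbol on $\mathbb{R}^d$, bounded on $L^r(\mathbb{R}^d)$ and hence on mean-zero $L^r(\mathbb{T}^d)$ by de~Leeuw transference, for \emph{every} $\theta>0$; then choose $\theta<\pi/(2\alpha)$. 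Second, the identity $M(\rho)=\int_0^\infty e^{-i\rho\sigma}Ae^{-\sigma A}\,d\sigma$ is only a conditionally convergent (distributional) computation, since $\|Ae^{-\sigma A}\|\sim\sigma^{-1}$ as $\sigma\to0^+$; this is standard and harmless, but worth flagging since it is precisely the singular-integral character of the operator.
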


\begin{corollary}\label{MR1}
    Assume that $\zeta \geq 2\al > 0$, $T>0$ and $q,r \in (1,\infty)$, then the operator 
    \begin{equation}\label{MR11}
        NF(t,x) = \int_0^{\frac{t}{2}} t^{\frac{\zeta}{2\al}-1} e^{-(t-s)(-\Delta)^{\al}} \Lambda^\zeta F(s,x) ds,
    \end{equation}
    is bounded from $L_{\frac{T}{2}}^q L^r(\mathbb{T}^d)$ to
    $L_T^q L^r(\mathbb{T}^d)$.
\end{corollary}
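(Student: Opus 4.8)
The plan is to derive Corollary \ref{MR1} from the maximal regularity of Lemma \ref{MR} by splitting off the extra $t$-weight and the gap between $\zeta$ and $2\al$, and then reducing the half-range integral to the full-range integral that Lemma \ref{MR} controls. Write $\zeta = 2\al + \theta$ with $\theta = \zeta - 2\al \geq 0$, so that $\Lambda^\zeta = \Lambda^{2\al}\Lambda^{\theta}$. The idea is that the operator $NF$ looks like the maximal regularity operator of Lemma \ref{MR} applied to a suitably reweighted input, with the remaining factors absorbed by elementary comparisons on the integration region $s \in [0, t/2]$, where one has the favorable relation $t - s \geq t/2 \geq s$.

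First I would write
\begin{equation}\label{MRproof1}
NF(t,x) = t^{\frac{\zeta}{2\al}-1}\int_0^{\frac{t}{2}} e^{-(t-s)(-\Delta)^{\al}} \Lambda^{2\al}\big(\Lambda^{\theta} F(s,x)\big)\, ds,
\end{equation}
and observe that on $[0, t/2]$ one may replace the kernel $e^{-(t-s)(-\Delta)^\al}$ by $e^{-\frac{t-s}{2}(-\Delta)^\al}e^{-\frac{t-s}{2}(-\Delta)^\al}$. Since $t - s \geq t/2$ on this region, the smoothing Lemma \ref{PQ} applied to one copy of the semigroup gains the factor $\Lambda^{\theta}$ back at a cost of $(t-s)^{-\theta/(2\al)} \lesssim t^{-\theta/(2\al)}$, which exactly cancels the surplus in the weight $t^{\frac{\zeta}{2\al}-1} = t^{\frac{\theta}{2\al}}\cdot t^{\frac{2\al}{2\al}-1} = t^{\frac{\theta}{2\al}}$. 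More precisely, I would estimate the $L^r$-norm in $x$ of the inner integral by bounding $\|\Lambda^{2\al} e^{-(t-s)(-\Delta)^\al} \Lambda^\theta(\cdot)\|$ through the composition, reducing matters to the operator
\begin{equation}\label{MRproof2}
\widetilde{N}G(t,x) = \int_0^{t} e^{-(t-s)(-\Delta)^{\al}} \Lambda^{2\al} G(s,x)\, ds,
\end{equation}
with $G$ built from $F$, $\widetilde N$ being exactly the operator bounded on $L_T^q L^r$ by Lemma \ref{MR}. Extending the $s$-integration from $[0,t/2]$ up to $[0,t]$ only enlarges the estimate since the integrand may be taken nonnegative after passing to absolute values, so the bound for $\widetilde N$ dominates that for $N$.

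I expect the bookkeeping of the three time exponents to be the main obstacle — one must verify that the weight $t^{\zeta/(2\al)-1}$, the smoothing gain $(t-s)^{-\theta/(2\al)}$ needed to recover $\Lambda^\theta$, and the domain restriction $s \leq t/2$ combine so that no residual positive power of $t$ remains uncontrolled (which would break the $L_T^q$ integrability in $t$) and no negative power creates a nonintegrable singularity at $t=0$. The hypothesis $\zeta \geq 2\al$ is precisely what guarantees $\theta \geq 0$, so that Lemma \ref{PQ} applies with a nonnegative order, and the restriction to $[0, t/2]$ is what keeps $t - s$ comparable to $t$; both are essential and I would flag exactly where each is used. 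Once the pointwise-in-$x$ reduction to $\widetilde N$ is in place, invoking Lemma \ref{MR} closes the argument, after mapping the input space $L_{T/2}^q L^r$ onto $L_T^q L^r$ via the change of integration domain and noting that shrinking the time interval from $T$ to $T/2$ on the source side is harmless for the bound.
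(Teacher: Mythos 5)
Your overall plan (peel off the weight and the derivative gap $\theta=\zeta-2\al$, exploit $t-s\ge t/2$ on the half-range, and land on the maximal regularity of Lemma \ref{MR}) is the right instinct, but the reduction as written has a genuine gap at its central step. Splitting $e^{-(t-s)(-\Delta)^\al}=e^{-\frac{t-s}{2}(-\Delta)^\al}e^{-\frac{t-s}{2}(-\Delta)^\al}$ leaves both factors depending on $s$, so nothing can be commuted out of the $ds$-integral; to use Lemma \ref{PQ} on one factor you are forced to take $L^r_x$-norms \emph{inside} the integral, i.e.\ to pass to the absolute-value majorant. At that point you no longer have the operator $\widetilde N G$ of \eqref{MRproof2}: you have $\int_0^{t}\big\|e^{-(t-s)(-\Delta)^\al}\Lambda^{2\al}G(s)\big\|_{L^r}\,ds$, and Lemma \ref{MR} says nothing about this quantity. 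Indeed, by Lemma \ref{PQ} the integrand is of size $(t-s)^{-1}\|G(s)\|_{L^r}$, which is not integrable at $s=t$, so ``extending the $s$-integration to $[0,t]$ after passing to absolute values'' produces a divergent majorant rather than a bound dominated by $\widetilde N$. Maximal regularity is an operator bound that lives on cancellation and is destroyed by the triangle inequality. There is also an unresolved ambiguity in what $G$ is: if $G=\Lambda^\theta F$ then $G\notin L^q_TL^r$ under the hypotheses, while if $\Lambda^\theta$ is absorbed into the semigroup at cost $(t-s)^{-\theta/(2\al)}$ the expression is no longer of the form $\widetilde NG$ for any function of $s$ alone.

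The fix the paper uses is to exploit $s\le t/2$ differently: write $t-s=\tfrac t2+(\tfrac t2-s)$ and factor $e^{-(t-s)(-\Delta)^\al}=e^{-\frac t2(-\Delta)^\al}e^{-(\frac t2-s)(-\Delta)^\al}$, so that $e^{-\frac t2(-\Delta)^\al}\Lambda^{\zeta-2\al}$ depends only on $t$ and can be pulled entirely outside the integral. Lemma \ref{PQ} then yields the factor $t^{-\frac{\zeta-2\al}{2\al}}$, which exactly cancels $t^{\frac{\zeta}{2\al}-1}$, and what remains is precisely $\int_0^{t/2}e^{-(\frac t2-s)(-\Delta)^\al}\Lambda^{2\al}F\,ds$, i.e.\ the Lemma \ref{MR} operator evaluated at time $t/2$; a change of variables in the outer $L^q_T$-norm finishes the proof. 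Incidentally, your observation that the kernel is non-singular on $[0,t/2]$ could be pushed to a genuinely different, more elementary proof: the crude bound $\|NF(t)\|_{L^r}\lesssim t^{-1}\int_0^{t/2}\|F(s)\|_{L^r}\,ds$ (via Lemma \ref{PQ} with $\nu=\zeta$) followed by Hardy's inequality in $L^q(0,T)$, $q>1$, avoids Lemma \ref{MR} altogether. But that is not the argument you wrote, and the argument you wrote does not close.
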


\begin{proof}
    Thanks to the semi-group property of $e^{-t(-\Delta)^\al}$, we can rewrite \eqref{MR11} as
    \begin{equation}
        NF(t,x) = t^{\frac{\zeta}{2\al}-1} e^{-\frac{t}{2}(-\Delta)^{\al}} \Lambda^{\zeta-2\al} \int_0^{\frac{t}{2}} e^{-(\frac{t}{2}-s)(-\Delta)^{\al}} \Lambda^{2\al} F(s,x) ds,
    \end{equation}
    and by using Lemma \ref{PQ} we obtain
    \begin{equation}\label{MR12}
            ||NF(t)||_{L_x^r} \lesssim t^{\frac{\zeta}{2\al}-1} t^{-\frac{\zeta - 2 \al}{2\al}} \left|\left| \int_0^{\frac{t}{2}} e^{-(\frac{t}{2}-s)(-\Delta)^{\al}} \Lambda^{2\al} F(s,x) ds \right|\right|_{L_x^r}.
    \end{equation}
    Taking the $L^q$ norm  with respect to time variable $t$ in \eqref{MR12} and using Lemma \ref{MR}, it follows
    \begin{equation}
        \begin{aligned}
            ||NF(t)||_{L_T^q L_x^r} \lesssim & \left|\left| \int_0^{\frac{t}{2}} e^{-(\frac{t}{2}-s)(-\Delta)^{\al}} \Lambda^{2\al} F(s,x) ds \right|\right|_{L_T^q L_x^r} \\
            \lesssim & \, ||F||_{L_{\frac{T}{2}}^q L_x^r}.
        \end{aligned}
    \end{equation} 
\end{proof}

It is classical that the operator $F \mapsto \int_0^t e^{(t-s)\Delta} F(s,x) ds$ is bounded from $L_T^2 L^2(\mathbb{T}^d)$ to $L_T^\infty L^2(\mathbb{T}^d)$, see for instance \cite[Lemma 14.1]{LR}. The following two lemmas extend this fact to the generalized heat kernel case.

\begin{lemma}\label{ML}
    Assume that $\zeta \geq \al > 0$ and $T>0$, then the operator
    \begin{equation}\label{ML3}
        F \mapsto \int_0^{\frac{t}{2}} t^{\frac{\zeta-\al}{2\al}} e^{-(t-s)(-\Delta)^\al} \Lambda^\zeta F(s,x) ds,
    \end{equation}
    is bounded from $L_T^2 L^2(\mathbb{T}^d)$ to $L_T^\infty L^2(\mathbb{T}^d)$.
\end{lemma}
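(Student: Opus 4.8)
The plan is to prove this maximal-type estimate directly, by bounding the $L^2_x$ norm of the operator pointwise in $t$ and showing the bound is uniform in $t \in [0,T]$. Write $GF(t,x)$ for the operator in \eqref{ML3}. First I would apply Minkowski's integral inequality to move the spatial norm inside the time integral, obtaining
\begin{equation*}
\|GF(t)\|_{L^2_x} \leq t^{\frac{\zeta-\al}{2\al}} \int_0^{t/2} \left\| e^{-(t-s)(-\Delta)^\al} \Lambda^\zeta F(s) \right\|_{L^2_x} ds.
\end{equation*}

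Next, since we work in $L^2$, the semigroup factor is controlled via Plancherel by the supremum of its Fourier multiplier: for each fixed $\tau = t-s > 0$ one has $\sup_{\xi \neq 0} |\xi|^\zeta e^{-\tau|\xi|^{2\al}} \leq C \tau^{-\zeta/(2\al)}$, an elementary calculus fact with the maximum attained at $|\xi|^{2\al} \sim \tau^{-1}$. Hence $\|e^{-(t-s)(-\Delta)^\al}\Lambda^\zeta F(s)\|_{L^2_x} \leq C(t-s)^{-\zeta/(2\al)}\|F(s)\|_{L^2_x}$; the same conclusion also follows from Lemma \ref{PQ} with $p=q=2$ and $\nu = \zeta$.

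The key structural observation is that the integration is restricted to $s \in [0, t/2]$, so that $t - s \geq t/2$ and the kernel is \emph{not} genuinely singular: $(t-s)^{-\zeta/(2\al)} \leq C t^{-\zeta/(2\al)}$. Inserting this and combining with the prefactor gives
\begin{equation*}
\|GF(t)\|_{L^2_x} \leq C t^{\frac{\zeta-\al}{2\al} - \frac{\zeta}{2\al}} \int_0^{t/2}\|F(s)\|_{L^2_x}\,ds = C t^{-\frac12}\int_0^{t/2}\|F(s)\|_{L^2_x}\,ds,
\end{equation*}
where the exponents cancel down to $-\tfrac12$ independently of $\zeta$. Applying Cauchy--Schwarz in $s$ on $[0,t/2]$ produces a compensating factor $t^{1/2}$, so that $\|GF(t)\|_{L^2_x} \leq C \|F\|_{L^2_T L^2_x}$ with a constant independent of $t$; taking the supremum over $t \in [0,T]$ yields the claimed bound.

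The computation is routine once the structure is seen; the only point requiring genuine care — and the reason the statement holds with an $L^\infty_T$ target rather than forcing the use of the Hardy--Littlewood--Sobolev inequality of Lemma \ref{EY} — is the cutoff $s \le t/2$, which converts the would-be singular integral into a harmless one and lets the algebraic weight $t^{(\zeta-\al)/(2\al)}$ cancel the semigroup singularity exactly down to the integrable exponent $-\tfrac12$. The hypothesis $\zeta \geq \al$ plays no essential role beyond keeping the prefactor exponent nonnegative, so the argument in fact works for any $\zeta > 0$.
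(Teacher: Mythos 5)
Your proof is correct, and it takes a genuinely different route from the paper's. The paper argues by duality: it pairs the output against a test function $\phi\in L^2(\mathbb{T}^d)$, moves the semigroup onto $\phi$, applies Cauchy--Schwarz first in frequency and then in time, and evaluates the resulting frequency integral exactly via the substitution $u=(t-s)|\xi|^{2\al}$, which produces the bound $\Gamma(\zeta/\al)\,\|\phi\|_{L^2_x}^2$; this is the classical ``energy'' mechanism behind $L^2_t\to L^\infty_t$ maximal estimates, exploiting the time-integrated smoothing $\int_0^\infty e^{-\tau|\xi|^{2\al}}|\xi|^{2\al}\,d\tau\lesssim 1$. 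You instead work directly on the output: Minkowski, the pointwise-in-time operator bound $\|\Lambda^\zeta e^{-\tau(-\Delta)^\al}\|_{L^2\to L^2}\lesssim \tau^{-\zeta/(2\al)}$ (which is indeed Lemma \ref{PQ} with $p=q=2$), the observation that $t-s\ge t/2$ on the domain of integration, the exact cancellation of exponents down to $t^{-1/2}$, and Cauchy--Schwarz in $s$. Both arguments are sound, and your observation that $\zeta\ge\al$ is not really needed (only $\zeta>0$) is accurate for this lemma. The trade-off is worth noting: your argument is more elementary and makes transparent that the cutoff $s\le t/2$ is what saves the day, but it is specific to the ``far from the singularity'' regime. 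For the companion Lemma \ref{HL}, where the integration runs over $[t/2,t]$ and the effective kernel singularity is $(t-s)^{-1/2}$, your scheme breaks down (Cauchy--Schwarz in $s$ would require $\int_{t/2}^t (t-s)^{-1}\,ds<\infty$), whereas the paper's duality computation goes through verbatim; the authors presumably chose the duality template so that both lemmas are proved the same way.
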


\begin{proof}
    For any $t \in [0,T]$ and $\phi \in L^2(\mathbb{T}^d)$, it holds that
    \begin{equation}\label{ML1}
        \begin{aligned}
            & \hspace{-.2in} \left| \left< \int_0^{\frac{t}{2}} t^{\frac{\zeta-\al}{2\al}} e^{-(t-s)(-\Delta)^\al} \Lambda^\zeta F(s,x) ds, \phi(x) \right>_{L_x^2} \right| \\
            = & \left|\int_0^{\frac{t}{2}} t^{\frac{\zeta-\al}{2\al}} \left< e^{-(t-s)(-\Delta)^\al} \Lambda^\zeta \phi(x), F(s,x) \right>_{L_x^2} ds \right| \\
            \lesssim & \int_0^{\frac{t}{2}} t^{\frac{\zeta-\al}{2\al}} \left( \int_{\mathbb{R}^d} e^{-(t-s)|\xi|^{2\al}} |\xi|^{2\zeta} |\hat{\phi}(\xi)|^2 d\xi \right)^{\frac{1}{2}} ||F(s)||_{L_x^2} ds \\
            \lesssim & \left( \int_0^{\frac{t}{2}}   \int_{\mathbb{R}^d} (t-s)^{\frac{\zeta-\al}{\al}} e^{-(t-s)|\xi|^{2\al}} |\xi|^{2\zeta} |\hat{\phi}(\xi)|^2 d\xi ds \right)^{\frac{1}{2}} ||F(s)||_{L_T^2 L_x^2},
        \end{aligned}
    \end{equation}
    where $\hat{\phi}(\xi) =  \int_{\mathbb{R}^d} e^{-ix\cdot \xi}\phi(x) \, dx$ denotes the Fourier transform with respect to the space variables. Note that
    \begin{equation}\label{ML2}
        \begin{aligned}
            & \hspace{-.2in} \int_0^{\frac{t}{2}}   \int_{\mathbb{R}^d} (t-s)^{\frac{\zeta-\al}{\al}} e^{-(t-s)|\xi|^{2\al}} |\xi|^{2\zeta} |\hat{\phi}(\xi)|^2 d\xi ds \\
           = & \int_{\mathbb{R}^d} \left( \int_0^{\frac{t}{2}} e^{-(t-s)|\xi|^{2\al}} \Big((t-s)|\xi|^{2\al}\Big)^{\frac{\zeta-\al}{\al}} |\xi|^{2\al} ds \right) |\hat{\phi}(\xi)|^2 d\xi \\
           \lesssim & \int_{\mathbb{R}^d} \left( \int_0^\infty e^{-s} s^{\frac{\zeta-\al}{\al}} ds \right) |\hat{\phi}(\xi)|^2 d\xi \\
           = & \, \Gamma\left( \frac{\zeta}{\al} \right) ||\phi||_{L_x^2}^2,
        \end{aligned}
    \end{equation}
    where $\Gamma(s) = \int_0^\infty \tau^{s-1} e^{-\tau} d\tau$. Combining \eqref{ML1} and \eqref{ML2}, the proof is completed.
\end{proof}

Let $X$ be a Banach space, $\mu \in \mathbb{R}$, $m \in [1,\infty]$ and $T>0$. The space $L_{\mu;T}^m X$ contains all $X$-valued function $f$ satisfying $t^\mu ||f(t,\cdot)||_X \in L^m(0,T)$, endowed with the norm
\begin{equation}
    ||f||_{L_{\mu;T}^m X} := \Big|\Big| t^\mu ||f(t,\cdot)||_X \Big|\Big|_{L^m(0, T)}.
\end{equation}
With this notation in hand, we have

\begin{lemma}\label{HL}
    Assume that $\zeta \geq \al > 0$, $\mu \in \mathbb{R}$ and $T>0$, then the operator
    \begin{equation}\label{H}
        F \mapsto \int_{\frac{t}{2}}^t t^\mu e^{-(t-s)(-\Delta)^{\al}} \Lambda^\zeta F(s,x) ds,
    \end{equation}
    is bounded from $L_{\mu; T}^2 \Dot{H}^{\zeta-\al}(\mathbb{T}^d)$ to $L_T^\infty L^2(\mathbb{T}^d)$.
\end{lemma}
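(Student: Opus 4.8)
The plan is to prove the bound by duality against $L^2(\mathbb{T}^d)$, mirroring the argument of Lemma \ref{ML} but adapting it to the external weight $t^\mu$, the restricted time window $[\frac{t}{2},t]$, and the $\Dot{H}^{\zeta-\al}$ norm on the input. Denoting by $GF(t,x)$ the image of $F$ under the operator \eqref{H}, I would fix $t\in[0,T]$ and an arbitrary $\phi\in L^2(\mathbb{T}^d)$ and estimate $\langle GF(t),\phi\rangle_{L_x^2}$. Using the self-adjointness of $e^{-(t-s)(-\Delta)^\al}$ and of $\Lambda$, I move the operators onto the test function, turning the pairing into $\int_{t/2}^t t^\mu\langle F(s),\Lambda^\zeta e^{-(t-s)(-\Delta)^\al}\phi\rangle_{L_x^2}\,ds$.

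Next I split $\Lambda^\zeta=\Lambda^{\zeta-\al}\Lambda^\al$ and transfer the factor $\Lambda^{\zeta-\al}$ back onto $F$, so that the integrand becomes $\langle\Lambda^{\zeta-\al}F(s),\Lambda^\al e^{-(t-s)(-\Delta)^\al}\phi\rangle_{L_x^2}$; here $\|\Lambda^{\zeta-\al}F(s)\|_{L_x^2}=\|F(s)\|_{\Dot{H}^{\zeta-\al}}$ is exactly the quantity controlled by the input norm. Since $s\in[\frac{t}{2},t]$ forces $1\le t/s\le 2$, the prefactor obeys $t^\mu\lesssim s^\mu$, which lets me replace $t^\mu$ by the intrinsic weight $s^\mu$ appearing in the $L^2_{\mu;T}\Dot{H}^{\zeta-\al}$ norm. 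A Cauchy–Schwarz inequality in $s$ then factors the pairing into $\big(\int_{t/2}^t s^{2\mu}\|F(s)\|_{\Dot{H}^{\zeta-\al}}^2\,ds\big)^{1/2}$, dominated by $\|F\|_{L^2_{\mu;T}\Dot{H}^{\zeta-\al}}$, times $\big(\int_{t/2}^t\|\Lambda^\al e^{-(t-s)(-\Delta)^\al}\phi\|_{L_x^2}^2\,ds\big)^{1/2}$.

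It then remains to show the second factor is $\lesssim\|\phi\|_{L_x^2}$ uniformly in $t$, and this is where the parabolic smoothing does the work. By Plancherel and the substitution $\sigma=t-s$, the square of that factor equals $\int_{\mathbb{R}^d}|\hat\phi(\xi)|^2|\xi|^{2\al}\big(\int_0^{t/2}e^{-2\sigma|\xi|^{2\al}}\,d\sigma\big)\,d\xi$, and since $|\xi|^{2\al}\int_0^{t/2}e^{-2\sigma|\xi|^{2\al}}\,d\sigma\le\frac12$, this is at most $\frac12\|\phi\|_{L_x^2}^2$. Combining the two factors and taking the supremum over $\phi$ with $\|\phi\|_{L_x^2}\le1$ gives $\|GF(t)\|_{L_x^2}\lesssim\|F\|_{L^2_{\mu;T}\Dot{H}^{\zeta-\al}}$ with a constant independent of $t$, whence the claimed $L_T^\infty L^2$ bound on taking $\sup_{t\in[0,T]}$. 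I expect the only genuine subtlety to be bookkeeping: correctly converting the external weight $t^\mu$ into the weight $s^\mu$ on the restricted interval, where the comparability $t\le 2s$ is essential and is precisely why the time window is $[\frac{t}{2},t]$ rather than $[0,t]$. The analytic heart—the exact cancellation of the half-derivative $\Lambda^\al$ against the time integration of the semigroup—is the standard energy/maximal-regularity computation already carried out in the proof of Lemma \ref{ML}.
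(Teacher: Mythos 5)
Your proposal is correct and follows essentially the same route as the paper's proof: dualizing against $\phi\in L^2$, moving the semigroup and $\Lambda^\al$ onto $\phi$ while keeping $\Lambda^{\zeta-\al}$ on $F$, converting $t^\mu$ to $s^\mu$ via $t\le 2s$ on $[\frac{t}{2},t]$, applying Cauchy--Schwarz in time, and absorbing $|\xi|^{2\al}$ into the time integral of the heat kernel by Plancherel. No gaps.
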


\begin{proof}
    The proof is similar to that of Lemma \ref{ML}. For any $t \in [0,T]$ and $\phi \in L^2(\mathbb{T}^d)$, it holds that
    \begin{equation}\label{H1}
        \begin{aligned}
            & \hspace{-.2in}
            \left| \left< \int_{\frac{t}{2}}^t t^\mu e^{-(t-s)(-\Delta)^{\al}} \Lambda^\zeta F(s,x) ds, \phi(x) \right>_{L_x^2} \right| \\
            = & \left|\int_{\frac{t}{2}}^t \left<  t^\mu e^{-(t-s)(-\Delta)^{\al}} \Lambda^\al \phi(x), \Lambda^{\zeta-\al} F(s,x) \right>_{L_x^2} ds \right|\\
            \lesssim & \int_{\frac{t}{2}}^t t^\mu \left( \int_{\mathbb{R}^d} e^{-(t-s)|\xi|^{2\al}} |\xi|^{2\al} |\hat{\phi}(\xi)|^2 d\xi \right)^{\frac{1}{2}} ||\Lambda^{\zeta-\al} F(s,\cdot)||_{L_x^2} ds \\
            \lesssim & \int_{\frac{t}{2}}^t \left( \int_{\mathbb{R}^d} e^{-(t-s)|\xi|^{2\al}} |\xi|^{2\al} |\hat{\phi}(\xi)|^2 d\xi \right)^{\frac{1}{2}} s^\mu ||\Lambda^{\zeta-\al} F(s,\cdot)||_{L_x^2} ds \\
            \lesssim & \left( \int_{\frac{t}{2}}^t  \int_{\mathbb{R}^d} e^{-(t-s)|\xi|^{2\al}} |\xi|^{2\al} |\hat{\phi}(\xi)|^2 d\xi ds \right)^{\frac{1}{2}} ||F||_{L_{\mu; T}^2 \Dot{H}_x^{\zeta - \al}}.
        \end{aligned}
    \end{equation}
    Note that
    \begin{equation}\label{H2}
        \begin{aligned}
            \int_{\frac{t}{2}}^t  \int_{\mathbb{R}^d} e^{-(t-s)|\xi|^{2\al}} |\xi|^{2\al} |\hat{\phi}(\xi)|^2 d\xi ds & = \int_{\mathbb{R}^d} \left( \int_{\frac{t}{2}}^t   e^{-(t-s)|\xi|^{2\al}} |\xi|^{2\al} ds \right)  |\hat{\phi}(\xi)|^2 d\xi \\
           &  \lesssim  \int_{\mathbb{R}^d} \left( \int_0^\infty e^{-s} ds \right)  |\hat{\phi}(\xi)|^2 d\xi            \lesssim  \, ||\phi||_{L_x^2}^2,
        \end{aligned}
    \end{equation}
    from which and \eqref{H1} we complete the proof.
\end{proof}

\begin{remark}\label{UL}
    Letting $\zeta = \al$ in Lemma \ref{ML} and taking $\mu = 0$, $\zeta = \al$ in Lemma \ref{HL}, we deduce that the operator
    \begin{equation}
        F \mapsto \int_0^t e^{-(t-s)(-\Delta)^\al} \Lambda^\al F(s,x) ds,
    \end{equation}
    is bounded from $L_T^2 L^2(\mathbb{T}^d)$ to $L_T^\infty L^2(\mathbb{T}^d)$. 
\end{remark}

\subsection{Estimates for the generalized heat flow}

The proposal of this subsection is to study the estimates of the generalized heat flow $h^\omega=e^{-t(-\Delta)^\al} u_0^\omega$ given in Theorem \ref{TW}. Before that, first we have

\begin{lemma}\label{NW}
    Let $\{g_k^j(\omega)\}_{k\in \mathbb{Z}_*^d,\, j\in \{1, \cdots , d-1\}}$ be a sequence of independent, mean zero, complex valued random variables defined on a given probability space $(\Omega,\mathcal{F}, \mathbb{P})$ satisfying \eqref{HN}, then there exists some positive constant $C$ such that for any $\{c_k^j\}_{k \in \mathbb{Z}_*^d,\, j \in \{1,\cdots, d-1\}} \in l^2$, one has
    \begin{equation}
        \left|\left| \sum_{j=1}^{d-1} \sum_{k \in \mathbb{Z}_*^d} c_k^j g_k^j(\omega) \right|\right|_{L_\omega^{r_s}} \leq C \left( \sum_{j=1}^{d-1} \sum_{k \in \mathbb{Z}_*^d} |c_k^j|^2 \right)^\frac{1}{2}.
    \end{equation}
\end{lemma}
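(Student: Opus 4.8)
The plan is to prove this Khintchine-type inequality by transferring everything to the $L^{2n}_\om$ level, where the moment hypothesis \eqref{HN} can be applied directly. Relabelling the pairs $(k,j)$ by a single index $m$ running over a countable set $\mathcal{I}$, I write $S(\om) = \sum_{m \in \mathcal{I}} c_m g_m(\om)$, where $\sum_m |c_m|^2 = ||c||_{l^2}^2 < \infty$. Since $\mathbb{P}$ is a probability measure and $r_s \leq 2n$, monotonicity of the $L^p_\om$ norms gives $||S||_{L^{r_s}_\om} \leq ||S||_{L^{2n}_\om}$, so it suffices to bound
\begin{equation*}
    \mathbb{E}|S|^{2n} = \mathbb{E}\big[ S^n \overline{S}^{\,n} \big] = \sum_{\substack{m_1,\ldots,m_n \\ m_1',\ldots,m_n'}} c_{m_1}\cdots c_{m_n}\,\overline{c_{m_1'}}\cdots \overline{c_{m_n'}} \; \mathbb{E}\big[ g_{m_1}\cdots g_{m_n}\, \overline{g_{m_1'}} \cdots \overline{g_{m_n'}} \big].
\end{equation*}

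First I would use independence and the mean-zero property to discard most terms. By independence the expectation factors over the distinct indices appearing in the combined list $\{m_1,\ldots,m_n,m_1',\ldots,m_n'\}$; since each $g_m$ has mean zero, any term containing an index of multiplicity exactly one vanishes. Thus only those terms survive in which every index occurs at least twice, and in particular at most $n$ distinct indices occur. For such a surviving term, writing the distinct indices as $m^{(1)},\ldots,m^{(r)}$ with respective multiplicities $a_1,\ldots,a_r \geq 2$ satisfying $\sum_{i=1}^r a_i = 2n$, Hölder's inequality together with \eqref{HN} (taking $C \geq 1$ without loss of generality) gives
\begin{equation*}
    \big| \mathbb{E}\big[ g_{m_1}\cdots \overline{g_{m_n'}} \big] \big| \leq \prod_{i=1}^r \mathbb{E}|g_{m^{(i)}}|^{a_i} \leq \prod_{i=1}^r \big( \mathbb{E}|g_{m^{(i)}}|^{2n} \big)^{a_i/2n} \leq C^{\sum_i a_i / 2n} = C,
\end{equation*}
so every surviving expectation is bounded by the single constant $C$.

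It then remains to sum the coefficient products over all admissible index patterns. The number of partition types --- the choices of $r$ and of multiplicities $a_1,\ldots,a_r \geq 2$ with $\sum_i a_i = 2n$, together with the way the $2n$ slots are grouped --- is a finite combinatorial constant $C_n$ depending only on $n$. For a fixed type I would sum over the distinct index choices and invoke the elementary nesting $||c||_{l^a} \leq ||c||_{l^2}$ valid for $a \geq 2$:
\begin{equation*}
    \sum_{m^{(1)},\ldots,m^{(r)}} \prod_{i=1}^r |c_{m^{(i)}}|^{a_i} \leq \prod_{i=1}^r \Big( \sum_m |c_m|^{a_i} \Big) \leq \prod_{i=1}^r \Big( \sum_m |c_m|^2 \Big)^{a_i/2} = ||c||_{l^2}^{2n}.
\end{equation*}
Combining the last three displays yields $\mathbb{E}|S|^{2n} \leq C_n\, C\, ||c||_{l^2}^{2n}$; taking the $2n$-th root and then applying the monotonicity from the first step gives the claim with $C' = (C_n C)^{1/2n}$.

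The hard part here is bookkeeping rather than analysis: one must organize the expansion so that the vanishing of single-multiplicity terms is transparent, and must verify that the combinatorial constant $C_n$ and the moment bound enter in a way that is genuinely uniform in the (possibly infinite) index set $\mathcal{I}$ and in the coefficients $c_m$. The decisive structural point, which is exactly what produces the $l^2$ norm on the right-hand side, is the embedding $||c||_{l^a} \leq ||c||_{l^2}$ for $a \geq 2$, available precisely because every surviving multiplicity satisfies $a_i \geq 2$ --- a consequence of the mean-zero assumption.
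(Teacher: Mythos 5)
Your proof is correct. Note, however, that the paper does not actually prove Lemma \ref{NW}: it disposes of it in one sentence by citing Lemma 4.2 of Burq--Tzvetkov \cite{BT}, which is precisely this moment-based Khintchine-type inequality. What you have written is, in effect, a self-contained proof of that cited lemma: reduce to the $L^{2n}_\omega$ norm by monotonicity of moments on a probability space, expand $\mathbb{E}\bigl[S^n\overline{S}^{\,n}\bigr]$, kill every term containing a multiplicity-one index using independence and mean zero, bound the surviving expectations uniformly via Lyapunov's inequality and \eqref{HN}, and control the coefficient sums through the nesting $\|c\|_{l^a}\leq\|c\|_{l^2}$ for $a\geq 2$ --- which is available exactly because all surviving multiplicities are at least $2$. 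All of these steps are sound, and the identification of the $a_i\geq 2$ constraint as the source of the $l^2$ norm is the right structural observation. The one point you wave at but do not carry out is the passage from finite to infinite index sets: the term-by-term expansion of $\mathbb{E}|S|^{2n}$ presupposes an interchange of expectation and an infinite multiple sum. The standard repair is to prove the estimate for finite truncations $S_N$, apply it to differences $S_N-S_M$ (whose coefficients are an $l^2$-tail) to see that $(S_N)$ is Cauchy in $L^{2n}_\omega$, and identify the limit with $S$ via the $L^2_\omega$ convergence; this is routine and your uniform constant makes it work, but it should be said. What your route buys over the paper's is transparency and self-containment, at the cost of half a page of combinatorial bookkeeping that the citation avoids.
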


    This result can be directly obtained by using Lemma 4.2 given in \cite{BT}.

Now, let us consider $h^\omega=e^{-t(-\Delta)^\al} u_0^\omega$, i.e. the solution to the following linear problem
\begin{equation}\label{FE}
    \left\{
    \begin{aligned}
        \partial_t h + (-\Delta)^\al h & = 0, \\
        h(0) & = u_0^\om,
    \end{aligned}
    \right.
\end{equation}
we have the following estimates.

\begin{lemma}\label{SE}
    Assume that $\{g_k^j(\omega)\}_{k\in \mathbb{Z}_*^d,\, j\in \{1, \cdots , d-1\}}$ satisfies the condition given in \eqref{HN}, $u_0 \in \mathbb{H}^s(\mathbb{T}^d)$ with $d \geq 2$ and $s \in (1-2\al,0)$, and one of the following holds
    \begin{itemize}
        \item [(1)]  $2\leq a, p \leq r_s < \infty$ and $\rho , \eta \in \mathbb{R}$ satisfying $\eta - 2 \al \rho -\frac{2\al}{a} \leq s$, $-1 < \rho a$,
        \item [(2)]  $a=\infty$, $2 \leq p \leq r_s$ and $\rho , \eta \in \mathbb{R}$ satisfying $\eta - 2 \al \rho = s$.
    \end{itemize}
    Then there exists a positive constant $C$ such that
    \begin{equation}\label{SE1}
        ||h^\omega||_{L_\omega^{r_s} L_{\rho,\infty}^a \Dot{W}_x^{\eta,p} } \leq C ||u_0||_{\Dot{H}^s},
    \end{equation}
    and
    \begin{equation}\label{SE2}
        \mathbb{P}(E_\la) \leq C \frac{||u_0||_{\Dot{H}^s}^{r_s}}{\la^{r_s}}, \quad \forall \lambda>0,
    \end{equation}
    where $E_\la=\left\{ \omega \in \Omega \, : \, ||h^\omega||_{L_{\rho,\infty}^a \Dot{W}_x^{\eta,p} } \geq \la \right\}$.
\end{lemma}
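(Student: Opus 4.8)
The plan is to prove the norm bound \eqref{SE1} first and deduce \eqref{SE2} from it by Chebyshev's inequality, since by definition of the $L_\om^{r_s}$-norm,
\[
\mathbb{P}(E_\la) \le \la^{-r_s}\int_\Om \|h^\om\|_{L_{\rho,\infty}^a\Dot{W}_x^{\eta,p}}^{r_s}\,\mathbb{P}(d\om) = \la^{-r_s}\,\|h^\om\|_{L_\om^{r_s}L_{\rho,\infty}^a\Dot{W}_x^{\eta,p}}^{r_s},
\]
so the right-hand side is controlled by \eqref{SE1}. The starting point for \eqref{SE1} is the explicit Fourier representation: since each $e_k^j$ is an eigenfunction of $-\Delta$ with eigenvalue $|k|^2$, one has
\[
\Lambda^\eta h^\om(t,x)=\sum_{j=1}^{d-1}\sum_{k\in\mathbb{Z}_*^d}u_{0k}^j\,|k|^\eta\,e^{-t|k|^{2\al}}\,e_k^j(x)\,g_k^j(\om).
\]
The idea is to bring the probabilistic norm $L_\om^{r_s}$ to the innermost position and apply the randomization estimate of Lemma \ref{NW} pointwise in $(t,x)$.

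In case (1), because $r_s\ge a$ and $r_s\ge p\ge 2$, Minkowski's integral inequality lets me move $L_\om^{r_s}$ inside $L_{\rho,\infty}^a$ and then inside $L_x^p$ (each exchange raising the norm since $r_s$ is the largest exponent), giving $\|h^\om\|_{L_\om^{r_s}L_{\rho,\infty}^a\Dot{W}_x^{\eta,p}}\le \|h^\om\|_{L_{\rho,\infty}^a L_x^p L_\om^{r_s}}$. Applying Lemma \ref{NW} at fixed $(t,x)$ yields the square-function bound $\|\Lambda^\eta h^\om(t,x)\|_{L_\om^{r_s}}\lesssim \big(\sum_{j,k}|u_{0k}^j|^2|k|^{2\eta}e^{-2t|k|^{2\al}}|e_k^j(x)|^2\big)^{1/2}$. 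The remaining task is deterministic. Since $p/2\ge 1$, the triangle inequality in $L_x^{p/2}$ together with $\||e_k^j|^2\|_{L_x^{p/2}}=\|e_k^j\|_{L_x^p}^2\lesssim 1$ removes the $x$-dependence, and then, since $a/2\ge 1$, the triangle inequality in $L_t^{a/2}$ reduces matters to the single-frequency integral $\int_0^\infty t^{a\rho}e^{-at|k|^{2\al}}\,dt$. The substitution $\tau=at|k|^{2\al}$ evaluates this as a constant multiple of $\Gamma(a\rho+1)\,|k|^{-2\al(a\rho+1)}$, where the hypothesis $-1<\rho a$ is exactly what guarantees convergence at $t=0$. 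This produces the factor $|k|^{-4\al\rho-4\al/a}$, whence
\[
\|h^\om\|_{L_{\rho,\infty}^a L_x^p L_\om^{r_s}}^2\lesssim\sum_{j,k}|u_{0k}^j|^2\,|k|^{2\eta-4\al\rho-4\al/a}.
\]
The condition $\eta-2\al\rho-\tfrac{2\al}{a}\le s$ says the exponent is $\le 2s$, and since $|k|\ge 1$ for every $k\in\mathbb{Z}_*^d$ we may dominate $|k|^{2\eta-4\al\rho-4\al/a}\le |k|^{2s}$, so the sum is $\lesssim\|u_0\|_{\Dot{H}^s}^2$.

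In case (2) the exponent relation is the equality $\eta-2\al\rho=s$, which makes the Fourier symbol factor as $t^\rho|k|^\eta e^{-t|k|^{2\al}}=|k|^s(t|k|^{2\al})^\rho e^{-t|k|^{2\al}}$, hence uniformly bounded by $|k|^s\sup_{\tau\ge0}(\tau^\rho e^{-\tau})$. At the level of the deterministic square function this uniform bound closes the estimate through the elementary inequality $\sup_t\sum_{j,k}(\cdot)\le\sum_{j,k}\sup_t(\cdot)$, again using $|k|\ge1$. The genuinely delicate point at this endpoint is that the time-supremum can no longer be exchanged with $L_\om^{r_s}$ by Minkowski (the ordering $r_s\ge a$ degenerates when $a=\infty$): one is left having to control the $L_\om^{r_s}$-norm of a supremum in $t$ of the random field, for which a maximal-type argument is required rather than the clean per-$t$ application of Lemma \ref{NW}.

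I expect the main obstacle to be precisely this $a=\infty$ endpoint: handling the supremum over time inside the probabilistic norm, where the naive Minkowski interchange fails and the moment hypothesis \eqref{HN} leaves no room for a crude union bound over times. The rest is organizational rather than conceptual — one must check that every Minkowski exchange in case (1) runs in the norm-increasing direction (which is exactly why $r_s$ must dominate $a$ and $p\ge2$), that the frequency-localized time integral converges (the role of $-1<\rho a$), and that the exponent extracted from the heat kernel, $-4\al\rho-4\al/a$, combines with $|k|^{2\eta}$ to land exactly on the hypothesis $\eta-2\al\rho-\tfrac{2\al}{a}\le s$ so that $|k|\ge1$ absorbs it into the $\Dot{H}^s$ norm.
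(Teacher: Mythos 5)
Your case (1) and your deduction of \eqref{SE2} from \eqref{SE1} via Chebyshev coincide with the paper's argument step for step: Minkowski's inequality to push $L^{r_s}_\om$ innermost past $L^a_t$ and $L^p_x$ (valid since $r_s\ge a$ and $r_s\ge p\ge 2$), Lemma \ref{NW} at fixed $(t,x)$, the triangle inequality in $L_x^{p/2}$ and $L_t^{a/2}$, the Gamma integral (convergent exactly because $\rho a>-1$), and finally $|k|\ge1$ together with $\eta-2\al\rho-\frac{2\al}{a}\le s$. That half is complete and correct.

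The gap is case (2), which you explicitly leave open. The resolution is that no maximal-type probabilistic estimate is needed: the supremum in $t$ must be disposed of \emph{deterministically, pathwise in $\om$, before} any probabilistic norm is taken. Using $\eta-2\al\rho=s$ to write $t^\rho|k|^\eta e^{-t|k|^{2\al}}=|k|^s\,(t|k|^{2\al})^\rho e^{-t|k|^{2\al}}$, the time-dependent factor is the symbol of the operator $t^\rho\Lambda^{2\al\rho}e^{-t(-\Delta)^\al}$, which by Lemma \ref{PQ} (with $\nu=2\al\rho\ge0$ and $q=p$) is bounded on $L^p_x(\mathbb{T}^d)$ uniformly in $t>0$. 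Hence for each fixed $\om$,
\[
\sup_{t}\Big\|\sum_{j,k}(t|k|^{2\al})^{\rho}e^{-t|k|^{2\al}}|k|^{s}u^{j}_{0k}e^{j}_{k}g^{j}_{k}(\om)\Big\|_{L^p_x}\le C\Big\|\sum_{j,k}|k|^{s}u^{j}_{0k}e^{j}_{k}g^{j}_{k}(\om)\Big\|_{L^p_x},
\]
whose right-hand side is a time-independent random series; only now does one take $L^{r_s}_\om$, swap it inside $L^p_x$ by Minkowski ($p\le r_s$), and apply Lemma \ref{NW} pointwise in $x$, after which the square-function computation is the same as in case (1) with exponent exactly $2s$. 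This uniform-in-$t$ operator bound is precisely what justifies the otherwise unexplained inequality between the second and third lines of the paper's chain for $a=\infty$. (It does presuppose $\rho\ge0$, so that $\nu=2\al\rho\ge0$ in Lemma \ref{PQ}; this holds in every instance where case (2) is invoked, e.g.\ $\rho=-\frac{s}{2\al}>0$ for $X_{T,4}$.) So the obstacle you flag is real if one insists on applying Lemma \ref{NW} per time slice, but it dissolves once the $t$-dependence is removed by an $\om$-wise multiplier bound first.
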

\begin{proof}

For $a \leq r_s < \infty$, using Minkowski's inequality and Lemma \ref{NW}, one gets 
 \begin{align*}
     ||h^\omega||_{L_\omega^{r_s} L_{\rho, \infty}^a \Dot{W}_x^{\eta,p}} & \leq  C \left| \left| \sum_{j=1}^{d-1} \sum_{k\in \mathbb{Z}_*^d} t^\rho|k|^\eta e^{-t|k|^{2\al}} u_{0k}^j e_k^j g_k^j(\omega) \right| \right|_{L_\infty^a L_x^p L_\omega^{r_s}} \\
 & \leq C \left|\left| \left( \sum_{j=1}^{d-1} \sum_{k\in \mathbb{Z}_*^d} \left| t^\rho|k|^\eta e^{-t|k|^{2\al}} u_{0k}^j e_k^j \right|^2 \right)^\frac{1}{2} \right|\right|_{L_\infty^a L_x^p} \\
 & = C \left|\left| \sum_{j=1}^{d-1} \sum_{k\in \mathbb{Z}_*^d} \left| t^\rho|k|^\eta e^{-t|k|^{2\al}} u_{0k}^j e_k^j \right|^2 \right|\right|_{L_\infty^{\frac{a}{2}} L_x^{\frac{p}{2}}}^\frac{1}{2} \\
 & \leq C \left( \sum_{j=1}^{d-1} \sum_{k\in \mathbb{Z}_*^d} \left|\left| \left| t^\rho|k|^\eta e^{-t|k|^{2\al}} u_{0k}^j e_k^j \right|^2 \right|\right|_{L_\infty^{\frac{a}{2}} L_x^{\frac{p}{2}}} \right)^\frac{1}{2} \\
 & \leq C \left( \sum_{j=1}^{d-1} \sum_{k\in \mathbb{Z}_*^d} \left|\left| t^\rho|k|^\eta e^{-t|k|^{2\al}} u_{0k}^j e_k^j \right|\right|_{L_\infty^a L_x^p}^2 \right)^\frac{1}{2}\\
 & \leq C \left( \sum_{j=1}^{d-1} \sum_{k\in \mathbb{Z}_*^d} \left|\left| t^\rho|k|^\eta e^{-t|k|^{2\al}} u_{0k}^j \right|\right|_{L_\infty^a}^2 \right)^\frac{1}{2} \\
 & \leq  C \left( \int_0^\infty t^{\rho a } e^{-t} dt \right)^{\frac{1}{a}} \left( \sum_{j=1}^{d-1} \sum_{k\in \mathbb{Z}_*^d} |u_{0k}^j|^2 |k|^{2\eta - 4\al \rho - \frac{4\al}{a}} \right)^{\frac{1}{2}} \\
 & \leq C \left( \sum_{j=1}^{d-1} \sum_{k\in \mathbb{Z}_*^d}  | u_{0k}^j 
|^2 |k|^{2s} \right)^{\frac{1}{2}} = C ||u_0||_{\Dot{H}^s},
 \end{align*}
where we used the fact that $||e_k^j||_{L_x^p} \leq C$ for a constant $C$ which is independent of $j$ and $k$.  Similarly, for $a = \infty$, $2\leq p \leq r_s$ and $\eta -2\al \rho = s$, we obtain
\begin{align*}
    ||h^\omega||_{L_\omega^{r_s} L_{\rho, \infty}^\infty \Dot{W}_x^{\eta,p}} 
 & = \left| \left| \sup_{t} \left|\left| \sum_{j=1}^{d-1} \sum_{k\in \mathbb{Z}_*^d} t^\rho|k|^\eta e^{-t|k|^{2\al}} u_{0k}^j e_k^j g_k^j(\omega) \right|\right|_{L_x^p} \right| \right|_{L_\omega^{r_s}} \\
 & = \left| \left| \sup_{t} \left|\left| \sum_{j=1}^{d-1} \sum_{k\in \mathbb{Z}_*^d} \left(t|k|^{2\al}\right)^{\rho} e^{-t|k|^{2\al}} |k|^s u_{0k}^j e_k^j g_k^j(\omega) \right|\right|_{L_x^p} \right| \right|_{L_\omega^{r_s}} \\
 & \leq C \left|\left| \sum_{j=1}^{d-1} \sum_{k\in \mathbb{Z}_*^d} |k|^s u_{0k}^j e_k^j g_k^j(\omega) \right|\right|_{L_x^p L_\om^{r_s}} \\
 & \leq C \left|\left| \left( \sum_{j=1}^{d-1} \sum_{k\in \mathbb{Z}_*^d} |k|^{2s} |u_{0k}^j|^2 |e_k^j|^2 \right)^{\frac{1}{2}} \right|\right|_{L_x^p} \\
 & = C \left|\left| \sum_{j=1}^{d-1} \sum_{k\in \mathbb{Z}_*^d} |k|^{2s} |u_{0k}^j|^2 |e_k^j|^2 \right|\right|_{L_x^{\frac{p}{2}}}^{\frac{1}{2}} \\
 & \leq C \left(  \sum_{j=1}^{d-1} \sum_{k\in \mathbb{Z}_*^d} \left|\left| |k|^{2s} |u_{0k}^j|^2 |e_k^j|^2 \right|\right|_{L_x^{\frac{p}{2}}} \right)^{\frac{1}{2}} \\
 & = C \left( \sum_{j=1}^{d-1} \sum_{k\in \mathbb{Z}_*^d} |k|^{2s} |u_{0k}^j|^2 ||e_k^j||_{L_x^p}^2 \right)^{\frac{1}{2}} \\
 & \leq C \left( \sum_{j=1}^{d-1} \sum_{k\in \mathbb{Z}_*^d} |k|^{2s} |u_{0k}^j|^2 \right)^{\frac{1}{2}} = C ||u_0||_{\Dot{H}^s}. 
\end{align*}
Thus we obtain \eqref{SE1}, from which one can directly conclude \eqref{SE2} by using the Bienaymé–Tchebichev inequality.
\end{proof}

For any fixed $T\in (0, +\infty]$, denote by
\begin{align}\label{Y}
    Y_T & := L_T^{\frac{4}{1+2\ga}} L_x^{\frac{2d}{(3-2\ga)\al-2}} \cap L_{\frac{1+2\ga}{4};T}^3 \Dot{W}_x^{\frac{2\al}{3},\frac{2d}{(3-2\ga)\al-2}}, \\
    X_{T,1} & := L_{\ga;T}^{\frac{4}{1-2\ga}} L_x^{\frac{2d}{(3-2\ga)\al-2}} \cap L_{\ga+\frac{1}{2\al}-\frac{1}{2};T}^{\frac{4}{1-2\ga}} \Dot{W}_x^{1-\al, \frac{2d}{(3-2\ga)\al-2}}, \\
    X_{T,2} & := L_{(\frac{s+1-\al}{2\al})_+;T}^{\frac{4}{1+2\ga}} \Dot{W}_x^{(s+1-\al)_+,\frac{2d}{(3-2\ga)\al-2}}, \\
    X_{T,3} & := L_T^{\frac{4\al}{(2\ga-1)\al+2}} \Dot{W}_x^{1-\al,\frac{2d}{(3-2\ga)\al-2}}, \\
    X_{T,4} & := L_{-\frac{s}{2\al};T}^\infty L_x^2 \cap L_{-\frac{s}{2\al};T}^2 \Dot{H}_x^\al, 
\end{align}
endowed with the natural norms
\begin{align}
    ||f||_{Y_T} & := ||f||_{L_T^{\frac{4}{1+2\ga}} L_x^{\frac{2d}{(3-2\ga)\al-2}}} + ||f||_{L_{\frac{1+2\ga}{4};T}^3 \Dot{W}_x^{\frac{2\al}{3},\frac{2d}{(3-2\ga)\al-2}}}, \\
    ||f||_{X_{T,1}} & := ||f||_{L_{\ga;T}^{\frac{4}{1-2\ga}} L_x^{\frac{2d}{(3-2\ga)\al-2}}} + ||f||_{L_{\ga+\frac{1}{2\al}-\frac{1}{2};T}^{\frac{4}{1-2\ga}} \Dot{W}_x^{1-\al, \frac{2d}{(3-2\ga)\al-2}}}, \\
    ||f||_{X_{T,2}} & := ||f||_{L_{(\frac{s+1-\al}{2\al})_+;T}^{\frac{4}{1+2\ga}} \Dot{W}_x^{(s+1-\al)_+,\frac{2d}{(3-2\ga)\al-2}}}, \\
    ||f||_{X_{T,3}} & := ||f||_{L_T^{\frac{4\al}{(2\ga-1)\al+2}} \Dot{W}_x^{1-\al,\frac{2d}{(3-2\ga)\al-2}}}, \\
    ||f||_{X_{T,4}} & := ||f||_{L_{-\frac{s}{2\al};T}^\infty L_x^2} + ||f||_{L_{-\frac{s}{2\al};T}^2 \Dot{H}_x^\al}. 
\end{align}

Now, let us add a remark for the motivation of introducing these spaces. As mentioned early, we can construct a mild solution $w_1$ to the problem \eqref{DE} in $L_\tau^{\frac{4}{1+2\ga}} L_x^{\frac{2d}{(3-2\ga)\al-2}}$ for $\tau > 0$ small, however, we need additional regularity, i.e. $w_1$ and $h^\om$ being in $X_{\tau,1} \cap X_{\tau,2}$, to ensure that $w_1$ is actually a weak solution on $[0,\tau]$ in the sense of Definition \ref{DEWS}. Note that $w_1,h^\om \in L_\tau^{\frac{4}{1+2\ga}} L_x^{\frac{2d}{(3-2\ga)\al-2}}$ is not enough to obtain that $w_1$ belongs to $X_{\tau,1} \cap X_{\tau,2}$, thus we need an auxiliary space, i.e. $L_{\frac{1+2\ga}{4};\tau}^3 \Dot{W}_x^{\frac{2\al}{3},\frac{2d}{(3-2\ga)\al-2}}$, to guarantee the necessary regularity of $w_1$. Therefore, we shall use a fixed point argument in $Y_\tau$ to solve $w_1$ of \eqref{DE}. When $t>0$, the singularity of $w_1$ disappears, thus for any fixed $T>0$, we can use the standard energy method to obtain a weak solution $w_2$ defined on $[\frac{\tau}{2},T]$, however, in order to get a well-defined weak solution on $[0,T]$, we additionally need to estimate $h^\om$ in 
$X_{T,3} \cap X_{T,4}$. That's why we introduce the above spaces.

With the help of Lemma \ref{SE}, we have the following properties for $h^\om$.

\begin{proposition}\label{PH}
    Assume that $\{g_k^j(\omega)\}_{k\in \mathbb{Z}_*^d,\, j\in \{1, \cdots , d-1\}}$ satisfies the condition given in \eqref{HN}, $u_0 \in \mathbb{H}^s(\mathbb{T}^d)$ with $d \geq 2$ and $s \in (1-2\al,0)$, then there exists a set $\Sigma \subset \Om$ with 
probability $1$, i.e. $\mathbb{P}(\Sigma) = 1$, such that for any $\om \in \Si$, the solution $h^\om$ to the problem \eqref{FE} lies in the following space:
    \begin{equation}\label{RH}
        h^\om \in Y_\infty \cap X_{\infty,1} \cap X_{\infty,2} \cap X_{\infty,3} \cap X_{\infty,4}.
    \end{equation}
\end{proposition}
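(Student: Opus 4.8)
The plan is to read Proposition \ref{PH} off directly from Lemma \ref{SE}. Each of the five spaces in \eqref{RH} (taken with $T=\infty$) is a finite intersection of elementary blocks of the form $L_{\rho,\infty}^a \Dot{W}_x^{\eta,p}$, so it suffices to check, block by block, that the exponent tuple $(a,p,\rho,\eta)$ lies in the admissible region of hypothesis (1) or (2) of Lemma \ref{SE}. Once this is verified, estimate \eqref{SE1} gives $||h^\om||_{L_\om^{r_s} Z} \leq C||u_0||_{\Dot{H}^s} < \infty$ for every block $Z$, so that $||h^\om||_Z < \infty$ for $\om$ in a set $\Si_Z$ of full probability (equivalently, send $\la \to \infty$ in \eqref{SE2}). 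Since there are only finitely many blocks, $\Si := \bigcap_Z \Si_Z$ still satisfies $\mathbb{P}(\Si) = 1$, and on $\Si$ the membership \eqref{RH} holds. No probabilistic tool beyond this finite intersection is needed; the whole content lies in the exponent bookkeeping.

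For the verification I would first record two facts common to almost all blocks. The spatial exponent is $p = \frac{2d}{(3-2\ga)\al - 2} = r_s$ throughout (except in the two $L^2_x$ blocks of $X_{\infty,4}$, where $p=2$), so $p \leq r_s$ holds with equality; and $r_s < \infty$ holds precisely because $(3-2\ga)\al - 2 > 0$, which is ensured by $\al \in (\frac{2}{3},1]$ together with $\ga < \frac{3}{2}-\frac{1}{\al}$ as in the Remark. The crucial observation is that, after substituting the explicit exponents of $Y_\infty$, $X_{\infty,1}$, $X_{\infty,2}$, $X_{\infty,3}$, the temporal condition $\eta - 2\al\rho - \frac{2\al}{a} \leq s$ of hypothesis (1) collapses in every case to the single master inequality $-\frac{\al(1+2\ga)}{2} \leq s$. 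This is exactly what the definition $\ga = \left(-\frac{1}{2}-\frac{s}{\al}\right)_+$ forces: when $\ga > 0$ the left-hand side equals $s$ and the inequality becomes an equality, while when $\ga = 0$ it reads $-\frac{\al}{2} \leq s$, which is precisely the defining condition $s \geq -\frac{\al}{2}$ of the regime $\ga = 0$. For $X_{\infty,2}$ one splits according to the sign of $s+1-\al$ (because of the $(\cdot)_+$ in its exponents), but both branches again reduce to the same inequality. For the two blocks of $X_{\infty,4}$ I would instead invoke hypothesis (2) on the $L_{-\frac{s}{2\al};\infty}^\infty L^2_x$ piece, where $a=\infty$, $p=2$, $\rho = -\frac{s}{2\al}$, $\eta=0$ satisfy $\eta - 2\al\rho = s$ exactly, and hypothesis (1) on the $L_{-\frac{s}{2\al};\infty}^2 \Dot{H}^\al_x$ piece, where the condition once more becomes $s \leq s$.

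It then remains to confirm the auxiliary constraints $2 \leq a \leq r_s$ and $-1 < \rho a$ of hypothesis (1) for each finite-$a$ block (including the $L^2_t\Dot{H}^\al_x$ block of $X_{\infty,4}$, for which $a=2$ and $\rho=-\frac{s}{2\al}$). The bound $-1 < \rho a$ is immediate because every weight exponent is nonnegative: $\ga \geq 0$, and since $\al \leq 1$ one has $\frac{1}{2\al}-\frac{1}{2} \geq 0$ and $-\frac{s}{2\al}>0$, so each $\rho \in \{0, \ga, \frac{1+2\ga}{4}, \ga+\frac{1}{2\al}-\frac{1}{2}, (\frac{s+1-\al}{2\al})_+, -\frac{s}{2\al}\}$ is $\geq 0$. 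The inequalities $a \geq 2$ and $a \leq r_s$ follow from $\al \leq 1$ and $d \geq 2$ by direct computation; for instance the exponent $a = \frac{4\al}{(2\ga-1)\al+2}$ of $X_{\infty,3}$ satisfies $a \geq 2 \Leftrightarrow (3-2\ga)\al \geq 2$, while $a \leq \frac{4}{1+2\ga} \leq \frac{2d}{(3-2\ga)\al-2} = r_s$ using $\al \leq 1$ and $d \geq 2$. One small point to keep in mind is that the time exponent $\frac{4}{1-2\ga}$ of $X_{\infty,1}$ requires $\ga < \frac{1}{2}$; this is guaranteed since $s > 1-2\al$ forces $\ga < \frac{3}{2}-\frac{1}{\al}$, which is $< \frac{1}{2}$ for $\al \leq 1$.

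I expect the only real obstacle to be this bookkeeping: systematically matching each weighted space-time norm to the admissible set of Lemma \ref{SE} and verifying, with a short case distinction on the sign of $\ga$ (and of $s+1-\al$), that every block reduces to the master inequality $-\frac{\al(1+2\ga)}{2} \leq s$. There is no new analytic ingredient beyond Lemma \ref{SE} itself and the elementary passage from finiteness of an $L_\om^{r_s}$ norm to almost-sure finiteness.
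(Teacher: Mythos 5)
Your proposal is correct and follows essentially the same route as the paper: apply Lemma \ref{SE} to each of the finitely many weighted space-time blocks making up $Y_\infty$ and $X_{\infty,1},\dots,X_{\infty,4}$, deduce almost-sure finiteness of each norm from the tail bound \eqref{SE2} (the paper realizes the "send $\la\to\infty$" step via the increasing union $\Sigma=\bigcup_k E_{2^k}^c$), and intersect the resulting full-measure sets. The only difference is that you explicitly carry out the exponent bookkeeping verifying hypotheses (1)--(2) of Lemma \ref{SE} for each block — including the reduction to the master inequality $-\frac{\al(1+2\ga)}{2}\le s$ and the checks $2\le a\le r_s$, $\rho a>-1$ — which the paper's proof simply asserts; your verification is accurate.
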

\begin{proof}
   For any given $\la > 0$, define
\begin{align*}
    & E_\la^0 := \left\{ \om \in \Om: ||h^\om||_{Y_\infty} \geq \la \right\}, \\
    & E_\la^j := \left\{ \om \in \Om: ||h^\om||_{X_{\infty,j}} \geq \la \right\}, \quad 1\le j\le 4,
\end{align*}
and let
\begin{equation}
    E_\la = \bigcup_{j=0}^4 E_\la^j.
\end{equation}
According to Lemma \ref{SE}, we know that
\begin{equation}
    \mathbb{P}(E_\la^j) \leq C \frac{||u_0||_{\Dot{H}_x^s}}{\la^{r_s}},~\ \ 0\le j\le 4,
\end{equation}
hence
\begin{equation}
    \mathbb{P}(E_\la) \leq \sum_{j=0}^4 \mathbb{P}(E_\la^j) \leq C \frac{||u_0||_{\Dot{H}_x^s}}{\la^{r_s}}.
\end{equation}
Let $\la_k=2^k$ $(k \in \mathbb{N})$, and define 
\begin{equation}
    \Sigma := \bigcup_{k=0}^\infty E_{\la_k}^c.
\end{equation}
Note that $E_{\la_k}^c \subset E_{\la_{k+1}}^c$, thus
\begin{equation}
    \mathbb{P}(\Sigma) = \mathbb{P} (\cup_{k=0}^\infty E_{\la_k}^c) = \lim_{k \rightarrow \infty} \mathbb{P}(E_{\la_k}^c) = 1 - \lim_{k \rightarrow \infty} \mathbb{P}(E_{\la_k}) \geq 1 - \lim_{k \rightarrow \infty} C 2^{-kr_s}||u_0||_{H^s}^{r_s} = 1,
\end{equation}
which implies $\mathbb{P}(\Sigma) = 1$. It is obvious that for any $\om \in \Sigma$, there exists $k_\om \in \mathbb{N}$ such that $\om \in E_{\la_{k_\om}}^c$, thus we get \eqref{RH}.
\end{proof}

\section{The problem of the difference $w=u-h^\omega$ and proof of main result}

Let $u=w+h^\omega$, note that $\mathrm{div}\, h^\om = 0$, then from \eqref{GNS1} and \eqref{FE}, we know that $w$ satisfies the following problem
\begin{equation}\label{DE}
    \left\{
    \begin{aligned}
       \partial_t w + (-\Delta)^\al w + B(w,w) + B(w,h^\om) + B(h^\om,w) + B(h^\om,h^\om) & = 0, \\
        \mathrm{div}\, w & =0, \\
        w(0) & = 0.
    \end{aligned}
\right.
\end{equation}
We start with the definition of weak solutions to the initial value problem
\eqref{DE}.

\begin{definition}\label{DEWS}
A function $w$ satisfying \eqref{RH1}-\eqref{RH2} and $\frac{dw}{dt} \in L^1\left( 0,T; \mathbb{V}' \right)$ is a 
 weak solution of the problem \eqref{DE} on $[0,T]$, if for any test function $\phi \in \mathbb{V}$ and a.e. $t \in [0,T]$, it holds that
    \begin{equation}
        \langle \frac{dw}{dt},\phi \rangle + \langle \Lambda^\al w, \Lambda^\al \phi \rangle + \langle B(w,w), \phi \rangle + \langle B(w,h^\om), \phi \rangle + \langle B(h^\om,w),\phi \rangle + \langle B(h^\om,h^\om), \phi \rangle = 0,
    \end{equation}
    and
    \begin{equation}
        \lim_{t \rightarrow 0^+} ||t^{\mu_s} w(t)||_{\Dot{H}^s} = 0.
    \end{equation}
    We say that the problem \eqref{DE} has a global weak solution if for any $T>0$, it admits a weak solution on $[0,T]$.
\end{definition}

Denote by 
\begin{equation}\label{M}
    M(f,g)=\int_0^t e^{-(t-s)(-\Delta)^\al} B(f,g) \, ds
\end{equation}
for two divergence free fields $f$ and $g$, for which we have the following estimate.

\begin{proposition}\label{MB}
    For any fixed $T>0$, assume that $\eta \geq \frac{2\al}{3}-1$, $\frac{d}{(3-2\ga)\al-2}\leq r < \infty$, $\max\left\{\frac{2}{1+2\ga}, \frac{12}{7+6\ga}\right\}<m<\infty$ and $0\leq \ka \leq \frac{1+2\ga}{4}$ satisfy
    \begin{equation}\label{R}
        1-\ka + \frac{1}{2\al} \left( \eta-1 - \frac{d}{r} \right) = \frac{1}{m}.
    \end{equation}
    Then there exists a constant $C>0$ such that
    \begin{equation}\label{M1}
    ||M(f,g)||_{L_{\ka;T}^m \Dot{W}_x^{\eta, r} } \leq C \left( ||f||_{Y_T}^2 + ||g||_{Y_T}^2 \right),
\end{equation}
where $Y_T$ is defined in \eqref{Y}.
\end{proposition}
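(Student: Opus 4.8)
The plan is to control $\|\Lambda^\eta M(f,g)(t)\|_{L^r_x}$ pointwise in $t$ and then take the weighted norm $\|t^\ka\,\cdot\,\|_{L^m(0,T)}$. The starting point is the algebraic identity $B(f,g)=\mathrm P(f\cdot\nabla)g=\mathrm P\,\mathrm{div}(f\otimes g)=\widetilde{\mathrm P}\,\Lambda(f\otimes g)$, valid because $\mathrm{div}\,f=0$; here $\widetilde{\mathrm P}$ is the composition of the Leray projection with the Riesz transforms, a zeroth-order Fourier multiplier bounded on every $L^r$, $1<r<\infty$, and commuting with the semigroup. The decisive structural choice is how many derivatives to let the heat flow absorb: smoothing away the full $\eta+1$ derivatives would produce a kernel $(t-s)^{-\theta'}$ with $\theta'\ge 1$ and a non-integrable time singularity, so instead I would peel off exactly $\tfrac{2\al}{3}$ derivatives and place them on the product. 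Writing $\Lambda^\eta e^{-(t-s)(-\Delta)^\al}B(f,g)=\widetilde{\mathrm P}\,\Lambda^{\eta+1-\frac{2\al}{3}}e^{-(t-s)(-\Delta)^\al}\bigl[\Lambda^{\frac{2\al}{3}}(f\otimes g)\bigr]$ and applying Lemma \ref{PQ} with $\nu=\eta+1-\tfrac{2\al}{3}\ge 0$ (this is exactly the hypothesis $\eta\ge\tfrac{2\al}{3}-1$) gives
\[
\|\Lambda^\eta e^{-(t-s)(-\Delta)^\al}B(f,g)(s)\|_{L^r_x}\lesssim (t-s)^{-\theta}\,\|\Lambda^{\frac{2\al}{3}}(f\otimes g)(s)\|_{L^q_x},
\]
where $q=p/2$ with $p=\tfrac{2d}{(3-2\ga)\al-2}$ and $\theta=\tfrac{1}{2\al}\bigl(\eta+1-\tfrac{2\al}{3}\bigr)+\tfrac{d}{2\al}\bigl(\tfrac2p-\tfrac1r\bigr)$. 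The hypothesis $r\ge\tfrac{d}{(3-2\ga)\al-2}$ is precisely $q\le r$, which is what Lemma \ref{PQ} requires.

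Next I would distribute the remaining $\tfrac{2\al}{3}$ derivatives across the product by the fractional Leibniz rule (Lemma \ref{PR}), choosing the Hölder exponents so that each factor sits in $L^p_x$:
\[
\|\Lambda^{\frac{2\al}{3}}(f\otimes g)(s)\|_{L^q_x}\lesssim \|f(s)\|_{\Dot W^{\frac{2\al}{3},p}_x}\|g(s)\|_{L^p_x}+\|f(s)\|_{L^p_x}\|g(s)\|_{\Dot W^{\frac{2\al}{3},p}_x}.
\]
This is exactly why $Y_T$ in \eqref{Y} carries its two components: the right-hand side is controlled by pairing the unweighted $L^{4/(1+2\ga)}_tL^p_x$ norm of one factor with the weighted $L^3_{(1+2\ga)/4;t}\Dot W^{2\al/3,p}_x$ norm of the other. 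By Hölder in time, $s^{(1+2\ga)/4}$ times this bracketed quantity lies in $L^{b}(0,T)$ with $\tfrac1b=\tfrac13+\tfrac{1+2\ga}{4}=\tfrac{7+6\ga}{12}$, i.e. $b=\tfrac{12}{7+6\ga}$, and the inequality $2xy\le x^2+y^2$ turns its $L^b$-norm into a bound by $\|f\|_{Y_T}^2+\|g\|_{Y_T}^2$. Denoting this $L^b$ function by $G$, the problem is reduced to the scalar estimate
\[
\Bigl\| t^\ka\int_0^t (t-s)^{-\theta}\,s^{-\frac{1+2\ga}{4}}\,G(s)\,ds\Bigr\|_{L^m(0,T)}\lesssim \|G\|_{L^b(0,T)}.
\]

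For the time integral I would split at $s=t/2$. On the far region $\{s<t/2\}$ the kernel is non-singular, $(t-s)\sim t$, so there the full $\eta+1$ derivatives may instead be handed to the semigroup and both factors estimated in $L^{4/(1+2\ga)}_tL^p_x$; their product lies in $L^{2/(1+2\ga)}_t$, and a Hölder-in-time estimate against the resulting smooth $t$-power closes under the constraint $m>\tfrac{2}{1+2\ga}$. On the near region $\{s>t/2\}$, where the singular kernel genuinely matters and $s\sim t$, I would use the fractional-Leibniz splitting above together with the one-dimensional Hardy–Littlewood–Sobolev inequality (Lemma \ref{EY}), which accounts for the constraint $m>\tfrac{12}{7+6\ga}$; thus the $\max$ defining the admissible range of $m$ reflects the two regions. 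Matching exponents in the near region forces the scaling identity \eqref{R}: a short computation shows \eqref{R} is equivalent to $\tfrac1m=\tfrac1b-1+\theta+\bigl(\tfrac{1+2\ga}{4}-\ka\bigr)$, the last term being the weight correction, whose nonnegativity is guaranteed by the hypothesis $\ka\le\tfrac{1+2\ga}{4}$.

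The main obstacle is precisely this weighted time integration. Lemma \ref{EY} is unweighted, yet the integral carries the two competing weights $t^{\ka}$ and $s^{-(1+2\ga)/4}$ against the singular kernel $(t-s)^{-\theta}$, and a direct Hölder reduction is borderline (it fails exactly at the endpoint $\ka=\tfrac{1+2\ga}{4}$). Making the splitting argument rigorous therefore requires careful book-keeping: verifying that $\theta\in(0,1)$ so that Lemma \ref{EY} is applicable, that the near-diagonal and far contributions both close under the stated constraints without endpoint loss, and that the surplus weight $\tfrac{1+2\ga}{4}-\ka$ is absorbed cleanly. Everything preceding this step — the divergence identity, the derivative peeling via Lemma \ref{PQ}, and the fractional Leibniz splitting via Lemma \ref{PR} — is essentially mechanical once the exponents are fixed.
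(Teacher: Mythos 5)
Your outline coincides with the paper's proof: write $B(f,g)$ in divergence form, split the Duhamel integral at $s=t/2$, hand all $1+\eta$ derivatives to the semigroup on the far piece, peel off $\frac{2\al}{3}$ derivatives onto $f\otimes g$ via Lemma \ref{PR} on the near piece, and close in time with Lemma \ref{EY}; your verification that \eqref{R} is equivalent to $\frac1m=\frac1b-1+\theta+(\frac{1+2\ga}{4}-\ka)$ is also correct. Two points, however, need repair before the argument is complete.

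First, your treatment of the far region $\{s<t/2\}$ is wrong as stated: you claim a plain H\"older-in-time estimate "closes under the constraint $m>\frac{2}{1+2\ga}$," but the relation \eqref{R} makes this step exactly scaling-critical. Pulling the kernel out in $L^{b'}_s$ produces $t^{1/b'-\tau}$ with $(\tau-1/b')m=1$, so the resulting power of $t$ is never in $L^m(0,T)$. The paper applies Lemma \ref{EY} on the far region as well, with $\tau=\frac{1-2\ga}{2}+\frac1m$ mapping $L_T^{2/(1+2\ga)}\ni\|f\|_{L_x^p}\|g\|_{L_x^p}$ to $L_T^m$; the condition $\tau<1$ is precisely $m>\frac{2}{1+2\ga}$, which is where that constraint actually comes from.

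Second, the "main obstacle" you flag --- reconciling the weights $t^{\ka}$ and $s^{-(1+2\ga)/4}$ with the unweighted Lemma \ref{EY} --- dissolves by two elementary comparisons, with no endpoint loss at $\ka=\frac{1+2\ga}{4}$. On $\{s<t/2\}$ one has $t\le 2(t-s)$, so $\ka\ge 0$ gives $t^\ka\lesssim(t-s)^\ka$ and the weight is folded into the kernel. On $\{s>t/2\}$ one writes $t^\ka=t^{\ka-\frac{1+2\ga}{4}}\,t^{\frac{1+2\ga}{4}}\le (t-s)^{\ka-\frac{1+2\ga}{4}}\,(2s)^{\frac{1+2\ga}{4}}$, using $\ka\le\frac{1+2\ga}{4}$ and $t-s\le t\le 2s$; the factor $s^{(1+2\ga)/4}$ is then absorbed into the $L^3_{\frac{1+2\ga}{4};T}\Dot W^{2\al/3,p}_x$ component of $Y_T$ via H\"older (giving the $L_T^{12/(7+6\ga)}$ input you identified), and the remaining power of $(t-s)$ is handled by the unweighted Lemma \ref{EY} with $\tau=\frac{1+2\ga}{4}-\ka+\theta=1-\bigl(\frac{7+6\ga}{12}-\frac1m\bigr)$, where $\tau<1$ is exactly $m>\frac{12}{7+6\ga}$. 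With these two corrections your argument reproduces the paper's proof.
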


\begin{proof}
    In the following calculation, we shall always denote by \begin{equation}\label{p}
    p=\frac{2d}{(3-2\ga)\al-2}
    \end{equation}
    for simplicity. Note that for any $f,h \in C_\si^\infty (\mathbb{T}^d)$ and $g \in C^\infty (\mathbb{T}^d)$, $\beta \in [0,1]$ and $l \in (1,\infty)$, by using Lemma \ref{PR} one has
    \begin{equation}\label{M10}
        \begin{aligned}
            |b(f,g,h)| & = \left| \int_{\mathbb{T}^d} B(f,g) \cdot h\, dx \right| = \left| \int_{\mathbb{T}^d} (f\cdot \nabla) g \cdot h \, dx \right| \\
            & \lesssim ||\Lambda^{1-\be}(f \otimes g)||_{L_x^{l}} ||\Lambda^\be h||_{L_x^{l'}} \\
            & \lesssim \left( ||\Lambda^{1-\be} f||_{L_x^{l_1}} ||g||_{L_x^{l_2}} + ||f||_{L_x^{l_3}} ||\Lambda^{1-\be}  g||_{L_x^{l_4}} \right) ||\Lambda^\be h||_{L_x^{l'}},
        \end{aligned}
    \end{equation}
    with $\frac{1}{l}=1-\frac{1}{l'}=\frac{1}{l_1}+ \frac{1}{l_2} = \frac{1}{l_3} + \frac{1}{l_4}$. From \eqref{M10} and Lemma \ref{B} one obtains
\begin{equation}\label{M11}
    ||\Lambda^{-\be} B(f,g)||_{L_x^l} \lesssim ||\Lambda^{1-\be} f||_{L_x^{l_1}} ||g||_{L_x^{l_2}} + ||f||_{L_x^{l_3}} ||\Lambda^{1-\be}  g||_{L_x^{l_4}}.
\end{equation}
    
Decompose $M(f,g)$ into 
 \begin{equation}\label{M2}
 M(f,g)=I_1+I_2:=
 \int_0^\frac{t}{2}e^{-(t-s)(-\Delta)^\al} B(f,g) \, ds+\int_\frac{t}{2}^t e^{-(t-s)(-\Delta)^\al} B(f,g) \, ds.
 \end{equation}

Using Lemma \ref{PQ}, it follows
\begin{equation}\label{M3}
\begin{aligned}
 \|I_1\|_{L_{\ka;T}^m \Dot{W}_x^{\eta,r}} &  =   \left|\left|\int_0^\frac{t}{2} t^\ka e^{-(t-s)(-\Delta)^\al} B(f,g) ds \right|\right|_{L_T^m \Dot{W}_x^{\eta,r}} \\
 & \lesssim \left|\left|\int_0^\frac{t}{2} (t-s)^{\ka - \frac{1+\eta}{2\al} - \frac{d}{2\al} \left( \frac{2}{p} -\frac{1}{r} \right) } ||\Lambda^{-1} B(f,g)||_{L_x^{\frac{p}{2}}}  ds \right|\right|_{L_T^m}.
 \end{aligned}
\end{equation}

Noting from the relation \eqref{R} that
$$
\ka - \frac{1+\eta}{2\al} - \frac{d}{2\al} \left( \frac{2}{p} -\frac{1}{r} \right)=\frac{2\gamma-1}{2}-\frac{1}{m},
$$
by using Lemma \ref{EY} and \eqref{M11} we have 
\begin{equation}\label{M3-1}
\begin{aligned}
 \|I_1\|_{L_{\ka;T}^m \Dot{W}_x^{\eta,r}} &
 \lesssim 
 \left|\left| \, ||\Lambda^{-1} B(f,g)||_{L_x^{\frac{p}{2}}} \,  \right|\right|_{L_T^m}
 \lesssim 
 \Big|\Big|||f||_{L_x^p} ||g||_{L_x^p} \Big|\Big|_{L_T^{\frac{2}{1+2\ga}}} \\
 & \lesssim ||f||_{L_T^{\frac{4}{1+2\ga}} L_x^p} ||g||_{L_T^{\frac{4}{1+2\ga}} L_x^p} \lesssim ||f||_{Y_T} ||g||_{Y_T},
\end{aligned}
\end{equation}

Similarly, by using Lemma \ref{PQ} one has
\begin{equation}\label{M3-2}
\begin{aligned}
 \|I_2\|_{L_{\ka;T}^m \Dot{W}_x^{\eta,r}} &  = \left|\left|\int_\frac{t}{2}^t t^\ka e^{-(t-s)(-\Delta)^\al} B(f,g) ds \right|\right|_{L_T^m \Dot{W}_x^{\eta,r}} \\
 & \lesssim \left|\left|\int_\frac{t}{2}^t t^{\ka} (t-s)^{- \frac{1}{2\al}\left( 1+\eta-\frac{2\al}{3} \right) - \frac{d}{2\al} \left( \frac{2}{p} -\frac{1}{r} \right) }  ||\Lambda^{\frac{2\al}{3}-1} B(f,g)||_{L_x^{\frac{p}{2}}} ds \right|\right|_{L_T^m} \\
& \lesssim \left|\left|\int_\frac{t}{2}^t t^{\ka - \frac{1+2\ga}{4} } (t-s)^{- \frac{1}{2\al}\left( 1+\eta-\frac{2\al}{3} \right) - \frac{d}{2\al} \left( \frac{2}{p} -\frac{1}{r} \right) } s^{\frac{1+2\ga}{4}} ||\Lambda^{\frac{2\al}{3}-1} B(f,g)||_{L_x^{\frac{p}{2}}} ds \right|\right|_{L_T^m}\\
& \lesssim \left|\left|\int_\frac{t}{2}^t (t-s)^{\ka - \frac{1+2\ga}{4} - \frac{1}{2\al}\left( 1+\eta-\frac{2\al}{3} \right) - \frac{d}{2\al} \left( \frac{2}{p} -\frac{1}{r} \right) } s^{\frac{1+2\ga}{4}} \Big( ||\Lambda^{\frac{2\al}{3}} f||_{L_x^p} ||g||_{L_x^p} + ||\Lambda^{\frac{2\al}{3}} g||_{L_x^p} ||f||_{L_x^p} \Big) ds \right|\right|_{L_T^m}, 
 \end{aligned}
\end{equation}
by using \eqref{M11}.

Moreover, by noting
$$\frac{1+2\ga}{4}-\ka + \frac{1}{2\al}\left( 1+\eta-\frac{2\al}{3} \right)+ \frac{d}{2\al} \left( \frac{2}{p} -\frac{1}{r} \right)=1-\left(\frac{7+6\gamma}{12}-\frac{1}{m}\right),
$$
and using Lemma \ref{EY} we get
\begin{equation}\label{M4}
\begin{aligned}
 \|I_2\|_{L_{\ka;T}^m \Dot{W}_x^{\eta,r}} 
 & \lesssim \left|\left| s^{\frac{1+2\ga}{4}} ||\Lambda^{\frac{2\al}{3}} f||_{L_x^p} ||g||_{L_x^p} \right|\right|_{L_T^{\frac{12}{7+6\ga}}} + \left|\left| s^{\frac{1+2\ga}{4}} ||\Lambda^{\frac{2\al}{3}} g||_{L_x^p} ||f||_{L_x^p} \right|\right|_{L_T^{\frac{12}{7+6\ga}}} \\
 & \lesssim ||f||_{L_{\frac{1+2\ga}{4};T}^3 \Dot{W}_x^{\frac{2\al}{3}, p}} ||g||_{L_T^{\frac{4}{1+2\ga}} L_x^p} + ||f||_{L_T^{\frac{4}{1+2\ga}} L_x^p} ||g||_{L_{\frac{1+2\ga}{4};T}^3 \Dot{W}_x^{\frac{2\al}{3}, p}} \\
 & \lesssim ||f||_{Y_T} ||g||_{Y_T}.
\end{aligned}
\end{equation}
Combining \eqref{M3-1}-\eqref{M4}, it follows the estimate \eqref{M1}.
\end{proof}

When $t$ near zero, we transform the problem \eqref{DE} to its integral form
\begin{equation}\label{IF}
   \begin{array}{ll}
   w(t,x) = (Kw)(t,x)& :=  - \int_0^t e^{-(t-s)(-\Delta)^\al} \Big( B(w,w) + B(w,h^\om) + B(h^\om, w) + B(h^\om,h^\om) \Big) ds\\[2mm]
   &= -M(w,w)-M(w,h^\om)-M(h^\om,w)-M(h^\om,h^\om).
   \end{array}
\end{equation}

As a consequence of Proposition \ref{MB}, we have
\begin{proposition}\label{CM}
    For any fixed $T>0$, the operator $K$ defined in \eqref{IF} satisfies
    \begin{equation}\label{CM1}
        ||K(w)||_{Y_T} \leq C_K \left( ||w||_{Y_T}^2 + ||h^\omega||_{Y_T}^2 \right),
    \end{equation}
    and
    \begin{equation}\label{CM2}
        ||K(w_1)-K(w_2)||_{Y_T} \leq C_K ||w_1-w_2||_{Y_T}(||w_1||_{Y_T}+||w_2||_{Y_T}+||h^\omega||_{Y_T}),
    \end{equation}
    where $C_K$ is a positive constant and the space $Y_T$ is defined in \eqref{Y}.
\end{proposition}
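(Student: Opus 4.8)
The plan is to derive both estimates directly from Proposition \ref{MB}, combined with the triangle inequality and the bilinearity of $M$. The first step is to recognize the norm $\|\cdot\|_{Y_T}$ as the sum of the two building blocks $\|\cdot\|_{L_T^{4/(1+2\ga)} L_x^{p}}$ and $\|\cdot\|_{L_{(1+2\ga)/4;T}^3 \Dot{W}_x^{2\al/3, p}}$, with $p=\frac{2d}{(3-2\ga)\al-2}$ as in \eqref{p}. Each of these corresponds to an admissible choice of the parameters $(\ka,m,\eta,r)$ in Proposition \ref{MB}: the choice $(\ka,m,\eta,r)=(0,\frac{4}{1+2\ga},0,p)$ for the first, and $(\ka,m,\eta,r)=(\frac{1+2\ga}{4},3,\frac{2\al}{3},p)$ for the second. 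I would verify that the scaling relation \eqref{R} holds in both cases: using $\frac{d}{p}=\frac{(3-2\ga)\al-2}{2}$, a short computation gives $\frac{1}{m}=\frac{1+2\ga}{4}$ in the first case and $\frac{1}{m}=\frac{1}{3}$ in the second, matching the chosen exponents. One also checks the side constraints $\eta\ge\frac{2\al}{3}-1$, $p\ge\frac{d}{(3-2\ga)\al-2}$, $m>\max\{\frac{2}{1+2\ga},\frac{12}{7+6\ga}\}$ and $0\le\ka\le\frac{1+2\ga}{4}$, all of which hold for $\ga\ge 0$ and $\al\in(\frac{2}{3},1]$. Thus \eqref{M1} applies in each of the two component norms of $Y_T$.

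For \eqref{CM1}, I would expand $K(w)$ according to \eqref{IF} into the four terms $M(w,w)$, $M(w,h^\om)$, $M(h^\om,w)$, $M(h^\om,h^\om)$, estimate each in the two component norms of $Y_T$ by means of \eqref{M1}, and sum. Since \eqref{M1} bounds $M(f,g)$ by $\|f\|_{Y_T}^2+\|g\|_{Y_T}^2$, the four contributions together are controlled by a constant multiple of $\|w\|_{Y_T}^2+\|h^\om\|_{Y_T}^2$, which is exactly \eqref{CM1}.

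For the Lipschitz bound \eqref{CM2}, I would exploit the bilinearity of $M$ to write
\begin{equation*}
K(w_1)-K(w_2)=-M(w_1,w_1-w_2)-M(w_1-w_2,w_2)-M(w_1-w_2,h^\om)-M(h^\om,w_1-w_2),
\end{equation*}
where the two copies of $M(h^\om,h^\om)$ cancel. The essential point is that the quadratic bound $\|f\|_{Y_T}^2+\|g\|_{Y_T}^2$ as stated in \eqref{M1} is too lossy to close this estimate; what is genuinely needed is the bilinear form $\|M(f,g)\|_{L_{\ka;T}^m\Dot{W}_x^{\eta,r}}\le C\|f\|_{Y_T}\|g\|_{Y_T}$, which is precisely what the computations in the proof of Proposition \ref{MB} produce, the right-hand sides of \eqref{M3-1} and \eqref{M4} being $\|f\|_{Y_T}\|g\|_{Y_T}$. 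Applying this bilinear estimate to each of the four terms above in each component norm of $Y_T$ and factoring out the common factor $\|w_1-w_2\|_{Y_T}$ yields \eqref{CM2}.

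The main obstacle is really the bookkeeping in the first step: confirming that both constituent norms of $Y_T$ lie within the admissible range of Proposition \ref{MB}, since the exponents entering the definition \eqref{Y} of $Y_T$ were tailored to fit the scaling identity \eqref{R}. Once this verification is secured, the remainder is a routine combination of the triangle inequality, bilinearity of $M$, and the (bilinear form of the) estimate \eqref{M1}.
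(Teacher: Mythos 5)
Your proposal is correct and follows essentially the same route as the paper: both component norms of $Y_T$ are obtained from Proposition \ref{MB} with exactly the parameter choices $(\ka,m,\eta,r)=(0,\tfrac{4}{1+2\ga},0,p)$ and $(\tfrac{1+2\ga}{4},3,\tfrac{2\al}{3},p)$, and \eqref{CM2} follows from bilinearity of $M$. Your observation that the Lipschitz bound requires the product form $\|f\|_{Y_T}\|g\|_{Y_T}$ established inside the proof of Proposition \ref{MB} (rather than the sum-of-squares form in the statement of \eqref{M1}) is a valid and slightly more careful reading of the paper's terse remark that \eqref{CM2} ``can be obtained in the same way.''
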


\begin{proof}
On one hand, by choosing $\eta=0$, $r=\frac{2d}{(3-2\ga)\al-2}$, $m=\frac{4}{1+2\ga}$ and $\ka=0$ in \eqref{M1}, we have
\begin{equation}\label{K1}
    ||K(w)||_{L_T^{\frac{4}{1+2\ga}} L_x^{\frac{2d}{(3-2\ga)\al-2}}} \leq C \left( ||w||_{Y_T}^2 + ||h^\om||_{Y_T}^2 \right).
\end{equation}
On the other hand, by choosing  $\eta=\frac{2\al}{3}$, $r=\frac{2d}{(3-2\ga)\al-2}$, $m=3$ and $\ka=\frac{1+2\ga}{4}$ in \eqref{M1}, it follows
\begin{equation}\label{K4}
    ||K(w)||_{L_{\frac{1+2\ga}{4};T}^3 \Dot{W}_x^{\frac{2\al}{3},\frac{2d}{(3-2\ga)\al-2}}} \leq C \left( ||w||_{Y_T}^2 + ||h^\om||_{Y_T}^2 \right).
\end{equation}
Combining \eqref{K1}-\eqref{K4} we conclude the estimate \eqref{CM1}. The inequality \eqref{CM2} can be obtained in the same way as above.
\end{proof}

\begin{proposition}\label{MS}
    Let $\Si$ be the event constructed in Proposition \ref{PH}, then for any $\om \in \Si$, there exists $\tau_\om > 0$ and a mild solution\footnote{We say that $w_1$ is a mild solution to the problem \eqref{DE} if $w_1$ solves the integral equation \eqref{IF}.} $w_1$ to the problem \eqref{DE} on the time interval $[0,\tau_\om]$. Moreover,
    \begin{equation}\label{RM}
        w_1 \in Y_{\tau_\om} \cap X_{{\tau_\om},1} \cap X_{{\tau_\om},2} 
        \cap X_{{\tau_\om},3} \cap X_{\tau_\om},
    \end{equation}
    where
    \begin{equation}
        X_T := L_{\ga;T}^2 \Dot{W}_x^{2\al-1,\frac{d}{(3-2\ga)\al-2}} \cap L_{\ga+\frac{1}{2\al}-\frac{1}{2};T}^2 \Dot{W}_x^{\al,\frac{d}{(3-2\ga)\al-2}},
    \end{equation}
    endowed with norm
    \begin{equation}
        ||f||_{X_T} := ||f||_{L_{\ga;T}^2 \Dot{W}_x^{2\al-1,\frac{d}{(3-2\ga)\al-2}}} + ||f||_{L_{\ga+\frac{1}{2\al}-\frac{1}{2};T}^2 \Dot{W}_x^{\al,\frac{d}{(3-2\ga)\al-2}}}.
    \end{equation}
\end{proposition}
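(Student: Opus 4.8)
The plan is to construct $w_1$ as the fixed point of the map $K$ of \eqref{IF} in the space $Y_{\tau_\om}$ for a suitably small $\tau_\om$, and then to upgrade its regularity by inserting the fixed point back into the integral equation. The smallness that drives the contraction comes from the free evolution: fix $\om\in\Si$; by Proposition \ref{PH} we have $h^\om\in Y_\infty$, and since the time-integrability exponents $\frac{4}{1+2\ga}$ and $3$ defining $Y_T$ are finite, the absolute continuity of the Lebesgue integral gives $\|h^\om\|_{Y_\tau}\to 0$ as $\tau\to 0^+$. Hence we may fix $\tau_\om\in(0,1)$ so small that $\de:=\|h^\om\|_{Y_{\tau_\om}}$ lies below a threshold determined by the constant $C_K$ of Proposition \ref{CM}.

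Next I would run the Banach fixed point argument on the closed ball $B_R=\{w\in Y_{\tau_\om}:\|w\|_{Y_{\tau_\om}}\le R\}$. By \eqref{CM1}, $\|K(w)\|_{Y_{\tau_\om}}\le C_K(R^2+\de^2)$, and by \eqref{CM2}, $\|K(w_1)-K(w_2)\|_{Y_{\tau_\om}}\le C_K(2R+\de)\|w_1-w_2\|_{Y_{\tau_\om}}$. Taking $R$ of order $\de^2$ (e.g. $R=2C_K\de^2$) and then shrinking $\de$ so that $C_K(R^2+\de^2)\le R$ and $C_K(2R+\de)<1$ hold simultaneously, $K$ maps $B_R$ into itself and is a strict contraction there. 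The Banach fixed point theorem yields a unique $w_1\in B_R\subset Y_{\tau_\om}$ with $w_1=K(w_1)$, which is precisely a mild solution of \eqref{DE} on $[0,\tau_\om]$.

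It remains to bootstrap the regularity. Writing $w_1=K(w_1)=-M(w_1,w_1)-M(w_1,h^\om)-M(h^\om,w_1)-M(h^\om,h^\om)$ with $w_1,h^\om\in Y_{\tau_\om}$, I only need to bound each $M(\cdot,\cdot)$ in the norms of $X_{\tau_\om,1},X_{\tau_\om,2},X_{\tau_\om,3}$ and $X_{\tau_\om}$. For $X_{\tau_\om,j}$ ($j=1,2,3$), whose space exponent equals $p:=\frac{2d}{(3-2\ga)\al-2}$, I would invoke Proposition \ref{MB} with the parameters $(\eta,r,m,\ka)$ read off from each norm and verify its admissibility conditions; the decisive one is $\ka\le\frac{1+2\ga}{4}$, which reduces to $\ga\le\frac32-\frac1\al$ and is exactly the restriction encoded in $\al\in(\frac23,1]$, $s\in(1-2\al,0)$. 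Each bound is then $\lesssim\|w_1\|_{Y_{\tau_\om}}^2+\|h^\om\|_{Y_{\tau_\om}}^2<\infty$. For $X_{\tau_\om}$, whose space exponent is $\frac{d}{(3-2\ga)\al-2}=\frac p2$ and whose time exponent is $m=2$, Proposition \ref{MB} is borderline, since the time-convolution kernel of the $\int_0^{t/2}$ contribution becomes critically singular when $\ga=0$; there I would replace the Hardy--Littlewood--Sobolev step (Lemma \ref{EY}) by the maximal-regularity estimates of Lemma \ref{MR} and Corollary \ref{MR1}, using that the product $f\otimes g$ lives naturally in $L^{p/2}$, matching the exponent $\frac p2$.

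The main obstacle is twofold. Quantitatively, one must calibrate $R$ and $\de$ so that the self-mapping inequality $C_K(R^2+\de^2)\le R$ and the contraction inequality $C_K(2R+\de)<1$ close simultaneously, which is possible precisely because the forcing term $M(h^\om,h^\om)$ is quadratically small, of order $\de^2$. Technically, the delicate point is the energy-level space $X_{\tau_\om}$, whose exponents sit on the boundary of Proposition \ref{MB}; this is exactly why the maximal-regularity machinery of Section 2, rather than Lemma \ref{EY}, must be invoked.
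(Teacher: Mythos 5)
Your proposal follows essentially the same route as the paper: shrink $\tau_\om$ so that $\|h^\om\|_{Y_{\tau_\om}}$ is small (using $h^\om\in Y_\infty$ from Proposition \ref{PH}), run the Banach fixed point argument for $K$ on a ball of radius $O(\de^2)$ in the divergence-free part of $Y_{\tau_\om}$ via Proposition \ref{CM}, and then bootstrap the extra regularity by feeding $w_1=K(w_1)$ back into Proposition \ref{MB} with the parameters read off from each target norm, switching to the maximal-regularity estimates (Lemma \ref{MR}, Corollary \ref{MR1}) for the borderline space $X_{\tau_\om}$ when $\ga=0$. This matches the paper's proof, including the correct identification of the endpoint case where Lemma \ref{EY} fails.
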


\begin{proof}

Due to \eqref{RH} we can choose $\tau_\om > 0$ small enough such that
\begin{equation}
    \la_{\tau_\om} := ||h^\om||_{Y_{\tau_\om}} \leq \frac{1}{4C_K},
\end{equation}
where $C_K$ is the constant given on the right hand side of \eqref{CM1}. 
It is easy to see from Proposition \ref{CM} that the operator $K$ defined by \eqref{IF} maps the closed subset 
$$A:=\left\{ w \in (Y_{\tau_\om})_\si : ||w||_{Y_{\tau_\om}} \leq 2C_K \lambda_{\tau_\om}^2 \right\}$$
into itself, with $(Y_{\tau_\om})_\si$ denoting the divergence-free subspace of $Y_{\tau_\om}$, and for any two $w_1$ and $w_2$ in 
the set $A$, one has
$$\|K(w_1)-K(w_2)\|_{Y_{\tau_\om}}\le \frac{1}{2}\|w_1-w_2\|_{Y_{\tau_\om}}.$$
So, there is a fixed point $w_1$ of $K$ on the closed subset $A$, which is a mild solution of the problem \eqref{DE} on $(Y_{\tau_\om})_\si$.

Next, let us verify that the mild solution $w_1$ satisfies the property \eqref{RM}.

\hspace{.02in}
\noindent \textbf{\underline{$\bullet$ $w_1 \in X_{{\tau_\om},1}$}}

First, by choosing $\eta=0$, $r=\frac{2d}{(3-2\ga)\al-2}$, $m=\frac{4}{1-2\ga}$ and $\ka=\ga$ in \eqref{M1}, it yields
\begin{equation}\label{K2}
    ||K(w_1)||_{L_{\ga;{\tau_\om}}^{\frac{4}{1-2\ga}} L_x^{\frac{2d}{(3-2\ga)\al-2}}} \leq C \left( ||w_1||_{Y_{\tau_\om}}^2 + ||h^\om||_{Y_{\tau_\om}}^2 \right).
\end{equation}

Second, by choosing $\eta=1-\al$, $r=\frac{2d}{(3-2\ga)\al-2}$, $m=\frac{4}{1-2\ga}$ and $\ka=\ga+\frac{1}{2\al}-\frac{1}{2}$ in \eqref{M1}, it follows
\begin{equation}\label{K3}
    ||K(w_1)||_{L_{\ga+\frac{1}{2\al}-\frac{1}{2};{\tau_\om}}^{\frac{4}{1-2\ga}} \Dot{W}_x^{1-\al,\frac{2d}{(3-2\ga)\al-2}}} \leq C \left( ||w_1||_{Y_{\tau_\om}}^2 + ||h^\om||_{Y_{\tau_\om}}^2 \right).
\end{equation}
Thus, we have 
\begin{equation}
    w_1 = K(w_1) \in X_{\tau_\om,1}.
\end{equation}
by using \eqref{RH}.

\hspace{.02in}
\noindent \textbf{\underline{$\bullet$ $w_1 \in X_{{\tau_\om},2}$}}

Note that when $s\in (1-2\al,\al-1]$, one has
\begin{equation}\label{K9}
    L_{(\frac{s+1-\al}{2\al})_+;{\tau_\om}}^{\frac{4}{1+2\ga}} \Dot{W}_x^{(s+1-\al)_+,\frac{2d}{(3-2\ga)\al-2}} = L_{\tau_\om}^{\frac{4}{1+2\ga}} L_x^{\frac{2d}{(3-2\ga)\al-2}} \supset
    Y_{\tau_\om},
\end{equation}
while $s\in (\al-1,0)$, choosing  $\eta=s+1-\al$, $r=\frac{2d}{3\al-2}$, $m=4$ and $\ka=\frac{s+1-\al}{2\al}$ in \eqref{M1}, it follows
\begin{equation}\label{K7}
    ||K(w)||_{ L_{\frac{s+1-\al}{2\al};{\tau_\om}}^{4} \Dot{W}_x^{s+1-\al,\frac{2d}{3\al-2}}} \leq C \left( ||w||_{Y_{\tau_\om}}^2 + ||h^\om||_{Y_{\tau_\om}}^2 \right),
\end{equation}
by noting $\gamma=\max\{-\frac{1}{2}-\frac{s}{\al},0\}=0$ as $s>\al-1$. 

Combining \eqref{RH}, \eqref{K9} and \eqref{K7} we conclude $w_1 = K(w_1) \in X_{\tau_\om,2}$.

\hspace{.02in}
\noindent \textbf{\underline{$\bullet$ $w_1 \in X_{\tau_\om,3}$}}

This claim is clear by setting $\eta=1-\al$, $r=\frac{2d}{(3-2\ga)\al-2}$, $m=\frac{4\al}{(2\ga-1)\al+2}$ and $\ka=0$ in \eqref{M1}.

\hspace{.02in}
\noindent \textbf{\underline{$\bullet$ $w_1 \in X_{\tau_\om}$}}

To show $w_1 \in L_{\ga;{\tau_\om}}^2 \Dot{W}_x^{2\al-1,\frac{d}{(3-2\ga)\al-2}}$, when $\ga>0$, by choosing $\eta=2\al-1$, $r=\frac{d}{(3-2\ga)\al-2}$, $m=2$ and $\ka=\ga$ in \eqref{M1}, we have
\begin{equation}\label{K5}
    ||K(w)||_{L_{\ga;{\tau_\om}}^2 \Dot{W}_x^{2\al-1,\frac{d}{(3-2\ga)\al-2}}} \leq C \left( ||w||_{Y_{\tau_\om}}^2 + ||h^\om||_{Y_{\tau_\om}}^2 \right).
\end{equation}
When $\ga = 0$, by using Lemma \ref{MR}, and taking $\be=1$ and $l=\frac{d}{3\al-2}$ in \eqref{M11}, we deduce
\begin{equation}
    \begin{aligned}
        ||M(f,g)||_{L_{\tau_\om}^2 \Dot{W}_x^{2\al-1, \frac{d}{3\al-2}}} & = \left|\left| \int_0^t e^{-(t-s)(-\Delta)^{\al}} \Lambda^{2\al} \Lambda^{-1} B(f,g) ds \right|\right|_{L_{\tau_\om}^2 L_x^{\frac{d}{3\al-2}}} \\
        & \lesssim ||\Lambda^{-1} B(f,g)||_{L_{\tau_\om}^2 L_x^{\frac{d}{3\al-2}}} 
         \lesssim \Big|\Big| ||f||_{L_x^{\frac{2d}{3\al-2}}} ||g||_{L_x^{\frac{2d}{3\al-2}}} \Big|\Big|_{L_{\tau_\om}^2} \\
        & \lesssim ||f||_{L_{\tau_\om}^4 L_x^\frac{2d}{3\al-2}} ||g||_{L_{\tau_\om}^4 L_x^\frac{2d}{3\al-2}} 
         \lesssim ||f||_{Y_{\tau_\om}} ||g||_{Y_{\tau_\om}},
    \end{aligned}
\end{equation}
which implies that \eqref{K5} holds as well when $\ga=0$.

To show $w_1 \in L_{\ga+\frac{1}{2\al}-\frac{1}{2};{\tau_\om}}^2 \Dot{W}_x^{\al,\frac{d}{(3-2\ga)\al-2}}$, when $\ga>0$, letting $\eta=\al$, $r=\frac{d}{(3-2\ga)\al-2}$, $m=2$ and $\ka=\ga+\frac{1}{2\al}-\frac{1}{2}$ in \eqref{M1}, it follows
\begin{equation}\label{K6}
    ||K(w)||_{L_{\ga+\frac{1}{2\al}-\frac{1}{2};{\tau_\om}}^2 \Dot{W}_x^{\al,\frac{d}{(3-2\ga)\al-2}}} \leq C \left( ||w||_{Y_{\tau_\om}}^2 + ||h^\om||_{Y_{\tau_\om}}^2 \right).
\end{equation}
When $\ga=0$, by using Corollary \ref{MR1} with $\zeta = 1+\al$, and \eqref{M11} with $\be=1$ and $l=\frac{d}{3\al-2}$,
we get that the term $I_1$ defined in \eqref{M2} satisfies
\begin{equation}\label{M8}
    \begin{aligned}
        \|I_1\|_{L_{\frac{1}{2\al}-\frac{1}{2};{\tau_\om}}^2 \Dot{W}_x^{\al,
        \frac{d}{3\al-2}}}   & =   \left|\left|\int_0^\frac{t}{2} t^{\frac{1}{2\al}-\frac{1}{2}} e^{-(t-s)(-\Delta)^\al} B(f,g) ds  \right|\right|_{L_{\tau_\om}^2 \Dot{W}_x^{\al, \frac{d}{3\al-2}}} \\
        & =  \left|\left|\int_0^\frac{t}{2} t^{\frac{1}{2\al}-\frac{1}{2}} e^{-(t-s)(-\Delta)^\al} \Lambda^{1+\al} \Lambda^{-1} B(f,g) ds  \right|\right|_{L_{\tau_\om}^2 L_x^{\frac{d}{3\al-2}}} \\
        & \lesssim  ||\Lambda^{-1} B(f,g)||_{L_{\tau_\om}^2 L_x^{\frac{d}{3\al-2}}}  \lesssim ||f||_{L_{\tau_\om}^4 L_x^\frac{2d}{3\al-2}} ||g||_{L_{\tau_\om}^4 L_x^\frac{2d}{3\al-2}} \\
        & \lesssim ||f||_{Y_{\tau_\om}} ||g||_{Y_{\tau_\om}}.
    \end{aligned}
\end{equation}
It is clear that \eqref{M4} still holds as $\ga=0$, from which and \eqref{M8} one obtains that \eqref{K6} is valid for $\ga=0$.

Combining \eqref{RH}, \eqref{K5} and \eqref{K6} we conclude
\begin{equation}
    w_1 = K(w_1) \in X_{\tau_\om}.
\end{equation}
Thus, we get the regularity of $w_1$ given in \eqref{RM}.
\end{proof}

\begin{proposition}\label{Prop}
    The mild solution $w_1$ constructed in Proposition \ref{MS} satisfies
    \begin{equation}\label{P1}
        t^{\mu_s} w_1 \in C\left( [0,{\tau_\om}]; \Dot{H}^{2\al(\mu_s -\ga)+\al-1}(\mathbb{T}^d) \right),
    \end{equation}
    where $\mu_s$ is defined in \eqref{Q}.
\end{proposition}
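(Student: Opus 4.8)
The plan is to use the Duhamel identity for the mild solution, namely $w_1 = K(w_1) = -M(w_1,w_1) - M(w_1,h^\om) - M(h^\om,w_1) - M(h^\om,h^\om)$ from \eqref{IF}, so that it suffices to prove
$t^{\mu_s} M(f,g) \in C\big([0,\tau_\om]; \Dot H^{\sigma}\big)$ for each of the four pairs $(f,g)$ drawn from $\{w_1, h^\om\}$, where I abbreviate $\sigma := 2\al(\mu_s-\ga)+\al-1$ and $p := \frac{2d}{(3-2\ga)\al-2}$. All of $w_1, h^\om$ lie in $Y_{\tau_\om}$ by Proposition \ref{MS} and Proposition \ref{PH}, and moreover in the sharper spaces $X_{\tau_\om,\cdot}$ and $X_{\infty,\cdot}$, which I expect to need in the borderline range. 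Note $w_1(0)=0$ by \eqref{DE}, so the value at the origin should turn out to be $0$.

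The first and central step is the pointwise-in-time estimate $\sup_{0<t\le\tau_\om} t^{\mu_s}\|M(f,g)(t)\|_{\Dot H^\sigma} \lesssim \|f\|_{Y_{\tau_\om}}\|g\|_{Y_{\tau_\om}}$. I would retain the Kato splitting $M(f,g)=I_1+I_2$ of \eqref{M2}. The key simplification is that $p/2=\frac{d}{(3-2\ga)\al-2}\ge 2$ for every admissible triple $(\al,\ga,d)$, since $(3-2\ga)\al-2\le 1$ forces $2[(3-2\ga)\al-2]\le 2\le d$; hence the torus embedding $L^{p/2}(\mathbb T^d)\hookrightarrow L^2(\mathbb T^d)$ reduces the $\Dot H^\sigma=\Dot W^{\sigma,2}$ norm to the $\Dot W^{\sigma,p/2}$ norm, after which the only semigroup input is the derivative-smoothing of Lemma \ref{PQ} applied within $L^{p/2}$ (no change of integrability, so no dimensional decay factor appears). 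Writing $\Lambda^\sigma=\Lambda^{\sigma+1}\Lambda^{-1}$ for $I_1$ and $\Lambda^\sigma=\Lambda^{\sigma+1-\frac{2\al}{3}}\Lambda^{\frac{2\al}{3}-1}$ for $I_2$, inserting the bilinear estimate \eqref{M11}, and repeating the Hölder-in-time bookkeeping of \eqref{M3-1} (using the $L_T^{\frac{4}{1+2\ga}}L_x^{p}$ component of $Y$) and of \eqref{M4} (using the $L^3_{\frac{1+2\ga}{4};T}\Dot W_x^{\frac{2\al}{3},p}$ component), both resulting temporal exponents collapse to exactly $-\mu_s$. This collapse is precisely the algebraic identity that pins down $\sigma=2\al(\mu_s-\ga)+\al-1$: one checks $-\frac{\sigma+1}{2\al}+\frac{1-2\ga}{2}=-\mu_s$ for $I_1$ and $\frac{1-6\ga}{6}-\frac{\sigma+1-2\al/3}{2\al}=-\mu_s$ for $I_2$.

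The main obstacle is that the $I_2$ bookkeeping above presupposes $\sigma+1-\tfrac{2\al}{3}\ge 0$, i.e. $\sigma\ge\tfrac{2\al}{3}-1$, and this can fail in part of the regime $\mu_s=0$ (concretely for $\al\in(\tfrac34,1]$ and $s$ near $1-2\al$); this is the analogue of the $\ga>0$ versus $\ga=0$ dichotomy already handled in Proposition \ref{MS}. Because on the torus with mean-zero data $\Lambda^\nu$ with $\nu<0$ is bounded but supplies no compensating positive power of $(t-s)$, a naive negative-order smoothing would break the scaling. Instead, in this range I would move all derivatives onto the product by using the divergence structure $B(f,g)=\mathrm P\,\mathrm{div}(f\otimes g)$ together with the fractional product rule of Lemma \ref{PR}: since $\sigma>-1$ always (equivalently $\mu_s-\ga>-\tfrac12$), the order $\sigma+1\in(0,\tfrac{2\al}{3})$ to be placed on $f\otimes g$ is controlled by interpolating the two components of $Y$, or more comfortably by the extra $\Dot W^{1-\al,p}$-type regularity carried by $X_{\tau_\om,1}$ and $X_{\infty,1}$; choosing how much smoothing to extract from the semigroup then retunes the temporal exponent back to $-\mu_s$. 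Carrying this case analysis through cleanly, while keeping every fractional order inside the range the available norms control, is where the real work lies.

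With the weighted pointwise bound secured, continuity on $(0,\tau_\om]$ should be routine: for $t_0>0$ and $t\to t_0$ I would split $M(f,g)(t)-M(f,g)(t_0)$ into the short integral over the interval between $t$ and $t_0$, which tends to $0$ in $\Dot H^\sigma$ by time-integrability of the integrand off the diagonal, plus $\int_0^{t_0}\big[e^{-(t-s)(-\Delta)^\al}-e^{-(t_0-s)(-\Delta)^\al}\big]B(f,g)\,ds$, which tends to $0$ by strong continuity of the semigroup on $\Dot H^\sigma$ and dominated convergence. For continuity at $t=0$ I would observe that the step-one estimate localizes, its right-hand side being really $\|f\|_{Y_{[0,t]}}\|g\|_{Y_{[0,t]}}$ (the norm over $[0,t]$), since $I_1,I_2$ only sample $f,g$ on $[0,t]$; as $f,g\in Y_{\tau_\om}$, absolute continuity of the integral gives $\|f\|_{Y_{[0,t]}}\to 0$, whence $t^{\mu_s}\|M(f,g)(t)\|_{\Dot H^\sigma}\to 0$. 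This simultaneously establishes continuity at the origin and identifies the limiting value as $0$, consistent with $w_1(0)=0$. Summing the four contributions then yields \eqref{P1} with $\mu_s$ as in \eqref{Q}.
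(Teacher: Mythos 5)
Your overall architecture coincides with the paper's: reduce to the four Duhamel terms $M(f,g)$ via $w_1=K(w_1)$, establish the uniform weighted bound $\sup_{0<t\le\tau_\om}t^{\mu_s}\|M(f,g)(t)\|_{\Dot{H}^{\sigma}}<\infty$ with $\sigma:=2\al(\mu_s-\ga)+\al-1$, and then obtain continuity by splitting off the short time integral (absolute continuity of the norms restricted to the small interval) and invoking continuity of the semigroup --- this is precisely the paper's $J_1,J_2,J_3$ decomposition in \eqref{P5}--\eqref{P10}, so the continuity part of your plan is essentially identical. Where you genuinely differ is the uniform bound. You keep the Kato splitting $M=I_1+I_2$ of \eqref{M2} and try to close with the $Y_{\tau_\om}$ norm alone after the (correct) embedding $\Dot{W}^{\sigma,p/2}\hookrightarrow\Dot{H}^{\sigma}$ from $p/2\ge 2$; your two exponent identities do collapse to $-\mu_s$, and the far-from-diagonal piece $I_1$ indeed closes with $Y_{\tau_\om}$ alone because $t-s\sim t$ there. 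The paper instead argues by three cases in $s$ and uses the stronger spaces from the start: for $\ga>0$ it estimates the whole integral $\int_0^t$ with no splitting, pairing $(t-s)^{\ga-\frac12}$ against the weight $s^{-2\ga}$ supplied by the $L_{\ga}^{4/(1-2\ga)}L_x^p$ component of $X_{\tau_\om,1}$, which yields a convergent scale-invariant Beta integral for every $\ga\in(0,\tfrac12)$; for $\ga=0$ it uses Remark \ref{UL} and Lemmas \ref{ML}, \ref{HL} together with $X_{\tau_\om,1}$ and $X_{\tau_\om,2}$.

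The one step you have not carried out is exactly the one you flag: the near-diagonal piece $I_2$ in the regime $\ga>\tfrac16$ (equivalently $s<-\tfrac{2\al}{3}$, which occurs only for $\al>\tfrac34$), where $\sigma+1-\tfrac{2\al}{3}<0$ and the $Y$-only bookkeeping genuinely fails: with the first component of $Y$ alone the time integral degenerates to $\int_{t/2}^t(t-s)^{-1}\,ds=\infty$. Your suggested repair through $X_{\tau_\om,1}$ is viable --- its component $L_{\ga}^{4/(1-2\ga)}L_x^p$ lets you run the paper's computation \eqref{3.36} on $I_2$ verbatim (the same Beta integral restricted to $[t/2,t]$), and the $\Dot{W}^{1-\al,p}$ component can also be made to work since $\sigma+\al\ge 0$ whenever $\ga<\tfrac32-\tfrac1\al$ --- but as written the proposal defers this verification, and it is the crux of the hardest parameter range. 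The cleanest fix is the paper's: once $X_{\tau_\om,1}$ is in play, the splitting into $I_1+I_2$ is unnecessary for $\ga>0$.
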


\begin{proof}
(1) We first show that
    \begin{equation}\label{P2}
        t^{\mu_s} K(w_1) \in L_{\tau_\om}^\infty \dot{H}^{2\al(\mu_s-\ga)+\al-1}(\mathbb{T}^d),
    \end{equation}
    which shall be discussed for the following three cases.
    
   \hspace{.02in} \noindent \textbf{ \underline{Case 1: $s \in (1-2\al,-\frac{\al}{2})$}}
    
    In this case one has $\ga=\left(-\frac{1}{2}-\frac{s}{\al}\right)_+>0$, $p=\frac{2d}{(3-2\gamma)\al-2}\ge 4$ and $\mu_s=\left(\frac{s+1-\al}{2\al}\right)_+=0$, so by using Lemma \ref{PQ} and the inequality \eqref{M11} with $\be=1$ and $l=\frac{p}{2}$,  we get that for any $t \in (0,\tau_\om]$,
\begin{equation}\label{3.36}
\begin{aligned}
        & \left|\left| \int_0^t e^{-(t-s)(-\Delta)^\al} B(f,g) ds \right|\right|_{\Dot{H}_x^{-2\al\ga+\al-1}}
          \lesssim  \left|\left| \int_0^t e^{-(t-s)(-\Delta)^\al} B(f,g) ds \right|\right|_{\Dot{W}_x^{-2\al\ga+\al-1, \frac{p}{2}}} \\
     & \hspace{.2in}  \lesssim 
        \int_0^t (t-s)^{\ga-\frac{1}{2}}\|\Lambda^{-1}B(t,g)\|_{L_x^{\frac{p}{2}}} ds
        \lesssim 
        \int_0^t  (t-s)^{\ga-\frac{1}{2}} ||f||_{L_x^p} ||g||_{L_x^p} ds \\
  & \hspace{.2in} \lesssim  \left( \int_0^t \left\{  (t-s)^{\ga-\frac{1}{2}}s^{-2\ga} \right\}^{\frac{2}{1+2\gamma}} ds\right)^{\frac{1+2\ga}{2}} ||f||_{L_{\ga;{\tau_\om}}^{\frac{4}{1-2\ga}} L_x^p} ||g||_{L_{\ga;{\tau_\om}}^{\frac{4}{1-2\ga}} L_x^p} \\
   & \hspace{.2in}     \lesssim  \, ||f||_{L_{\ga;{\tau_\om}}^{\frac{4}{1-2\ga}} L_x^p} ||g||_{L_{\ga;{\tau_\om}}^{\frac{4}{1-2\ga}} L_x^p} 
        \lesssim  \,  ||f||_{X_{\tau_\om,1}} ||g||_{X_{\tau_\om,1}},
 \end{aligned}
\end{equation}
which implies the claim \eqref{P2} from the form \eqref{IF} of the operator $K$.

\iffalse    
--------------------------------  

    Let $q=\frac{2}{1+2\ga}$, by the similar argument as in \eqref{M3}, using Holder's inequality, taking $\be=1$ and $l=\frac{p}{2}$ in \eqref{M11}, one observes for any $\lambda \in [0,2\ga)$ and $t \in [0,\tau_\om]$, it holds that
    \begin{equation}\label{P3}
        \begin{aligned}
        & \left|\left| \int_0^t t^\lambda e^{-(t-s)(-\Delta)^\al} B(f,g) ds \right|\right|_{\Dot{H}_x^{2\al(\lambda-\ga)+\al-1}} \\
        \lesssim & \left|\left| \int_0^t t^\lambda e^{-(t-s)(-\Delta)^\al} B(f,g) ds \right|\right|_{\Dot{W}_x^{2\al(\lambda-\ga)+\al-1, \frac{p}{2}}} \\
        \lesssim & \int_0^t \Big( (t-s)^\lambda + s^\lambda \Big) (t-s)^{-\lambda+\ga-\frac{1}{2}}s^{-2\ga} s^{2\ga} ||f||_{L_x^p} ||g||_{L_x^p} ds \\
        \lesssim & \left( \int_0^t \left\{ \Big( (t-s)^\lambda + s^\lambda \Big) (t-s)^{-\lambda+\ga-\frac{1}{2}}s^{-2\ga} \right\}^q \right)^{\frac{1}{q}} ||f||_{L_{\ga;{\tau_\om}}^{\frac{4}{1-2\ga}} L_x^p} ||g||_{L_{\ga;{\tau_\om}}^{\frac{4}{1-2\ga}} L_x^p} \\
        \lesssim  & \, ||f||_{L_{\ga;{\tau_\om}}^{\frac{4}{1-2\ga}} L_x^p} ||g||_{L_{\ga;{\tau_\om}}^{\frac{4}{1-2\ga}} L_x^p} \\
        \lesssim & \,  ||f||_{X_{\tau_\om,1}} ||g||_{X_{\tau_\om,1}},
        \end{aligned}
    \end{equation}
    the penultimate inequality holds due to $\la<2\ga$. Note that $w_1, h^\om \in X_{\tau_\om,1}$, we obtain \eqref{P2} by taking $\lambda=0$ in \eqref{P3}.

---------------------------

\fi

 \hspace{.02in} \noindent \textbf{ \underline{Case 2: $s\in [-\frac{\al}{2},\al-1)$}}

    In this case one has $\ga=0$ and $\mu_s=0$. 
    By using Remark \ref{UL} and \eqref{M11} with $\be=1$ and $l=2$,  we get that for any $t \in (0,\tau_\om]$,
    \begin{equation}\label{P4}
        \begin{aligned}
        & \left|\left| \int_0^t e^{-(t-s)(-\Delta)^\al} B(f,g) ds \right|\right|_{\Dot{H}^{\al-1}} 
        \lesssim \left|\left| \int_0^t e^{-(t-s)(-\Delta)^\al} \Lambda^\al \Lambda^{-1} B(f,g) ds \right|\right|_{L_x^2} \\
        & \hspace{2in}  \lesssim  \, ||\Lambda^{-1} B(f,g)||_{L_{\tau_\om}^2L_x^2} 
        \lesssim  \, \Big|\Big| ||f||_{L_x^4} ||g||_{L_x^4} \Big|\Big|_{L_{\tau_\om}^2} \\
          & \hspace{2in} \lesssim \, ||f||_{L_{\tau_\om}^4L_x^{\frac{2d}{3\al-2}}} ||g||_{L_{\tau_\om}^4L_x^{\frac{2d}{3\al-2}}} 
        \lesssim  \, ||f||_{X_{\tau_\om,1}} ||g||_{X_{\tau_\om,1}},
        \end{aligned}
    \end{equation}
by noting $\frac{2d}{3\al-2}\ge 4$. Thus, we conclude \eqref{P2} from the definition \eqref{IF} of the operator $K$  as well.

 \hspace{.02in} \noindent \textbf{ \underline{Case 3: $s \in [\al-1,0)$}}

    In this case one has $\ga=0$ and $\mu_s=\frac{s+1-\al}{2\al}$. By using Lemma \ref{ML} with $\zeta = 1 + s$, and \eqref{M11} with $\be=1$ and $l=2$,  we get that for any $t \in (0,\tau_\om]$,
    \begin{equation}\label{P13}
        \begin{aligned}
             \left|\left| \int_0^{\frac{t}{2}} t^{\frac{s+1-\al}{2\al}} e^{-(t-s)(-\Delta)^\al} B(f,g) ds \right|\right|_{\Dot{H}_x^s} 
           & =  \left|\left| \int_0^{\frac{t}{2}} t^{\frac{s+1-\al}{2\al}} e^{-(t-s)(-\Delta)^\al} \Lambda^{1+s} \Lambda^{-1}B(f,g) ds \right|\right|_{L_x^2} \\
            & \lesssim  \, ||\Lambda^{-1}B(f,g)||_{L_{\tau_\om}^2 L_x^2} 
            \lesssim  \, ||f||_{L_{\tau_\om}^4 L_x^4} ||g||_{L_{\tau_\om}^4 L_x^4} 
\\
            &           
            \lesssim  \, ||f||_{L_{\tau_\om}^4 L_x^{\frac{2d}{3\al-2}}} ||g||_{L_{\tau_\om}^4 L_x^{\frac{2d}{3\al-2}}} 
            \lesssim  \, ||f||_{X_{\tau_\om,1}} ||g||_{X_{\tau_\om,1}}.
        \end{aligned}
    \end{equation}
    On the other hand, by using Lemma \ref{HL} with $\mu = \frac{s+1-\al}{2\al}$ and $\zeta = 1 + s$, taking $\be = \al - s$ and $l=2$ in \eqref{M11}, one obtains for any $t \in (0,\tau_\om]$,
    \begin{equation}\label{P14}
        \begin{aligned}
            & \left|\left| \int_{\frac{t}{2}}^t t^{\frac{s+1-\al}{2\al}} e^{-(t-s)(-\Delta)^\al} B(f,g) ds \right|\right|_{\Dot{H}_x^s} \\
            & \hspace{.2in} =  \left|\left| \int_{\frac{t}{2}}^t t^{\frac{s+1-\al}{2\al}} e^{-(t-s)(-\Delta)^\al} \Lambda^{1+s} \Lambda^{-1}B(f,g) ds \right|\right|_{L_x^2} \\
           &\hspace{.2in}  \lesssim  \, ||\Lambda^{-1}B(f,g)||_{L_{\frac{s+1-\al}{2\al};{\tau_\om}}^2 \Dot{H}_x^{s+1-\al}} \\
           & \hspace{.2in} \lesssim  \, ||f||_{L_{\frac{s+1-\al}{2\al};{\tau_\om}}^4 \Dot{W}_x^{s+1-\al,4}} ||g||_{L_{\tau_\om}^4 L_x^4} + ||g||_{L_{\frac{s+1-\al}{2\al};{\tau_\om}}^4 \Dot{W}_x^{s+1-\al,4}} ||f||_{L_{\tau_\om}^4 L_x^4} \\
          & \hspace{.2in} \lesssim  \, ||f||_{L_{\frac{s+1-\al}{2\al};{\tau_\om}}^4 \Dot{W}_x^{s+1-\al, \frac{2d}{3\al-2}}} ||g||_{L_{\tau_\om}^4 L_x^\frac{2d}{3\al-2}} + ||g||_{L_{\frac{s+1-\al}{2\al};{\tau_\om}}^4 \Dot{W}_x^{s+1-\al,\frac{2d}{3\al-2}}} ||f||_{L_{\tau_\om}^4 L_x^\frac{2d}{3\al-2}} \\
          & \hspace{.2in} \lesssim  \, ||f||_{X_{\tau_\om,2}}||g||_{X_{\tau_\om,1}} + ||g||_{X_{\tau_\om,2}}||f||_{X_{\tau_\om,1}}.
        \end{aligned}
    \end{equation}
Thus, we conclude \eqref{P2} from the form \eqref{IF} of the operator $K$ as well.

    (2)  Next, we verify \eqref{P1}. 
    
    For any $0 < t_1 < t_2 \leq \tau_\om$, obviously, one has
    \begin{equation}\label{P5}
            ||t_2^{\mu_s} K(w_1)(t_2) - t_1^{\mu_s} K(w_1)(t_1) ||_{\Dot{H}_x^{2\al(\mu_s-\ga)+\al-1}} \le J_1+J_2+J_3,
    \end{equation}
    where
    \begin{equation}\label{P6}
        \begin{aligned}
            J_1 & =\left|\left| t_2^{\mu_s} \int_{t_1}^{t_2} e^{-(t_2-s)(-\Delta)^\al} \Big( B(w_1,w_1) + B(w_1,h^\om) + B(h^\om,w_1) + B(h^\om,h^\om) \Big) ds \right|\right|_{\Dot{H}_x^{2\al(\mu_s-\ga)+\al-1}}, \\
            J_2 & = \left[ \left( \frac{t_2}{t_1} \right)^{\mu_s}  - 1 \right] \left|\left| t_1^{\mu_s} e^{-(t_2-t_1)(-\Delta)^\al} K(w_1)(t_1)  \right|\right|_{\Dot{H}_x^{2\al(\mu_s-\ga)+\al-1}}, \\
            J_3 & = \left|\left| \left( e^{-(t_2-t_1)(-\Delta)^\al} - 1 \right) t_1^{\mu_s} K(w_1)(t_1) \right|\right|_{\Dot{H}_x^{2\al(\mu_s-\ga)+\al-1}}.
        \end{aligned}
    \end{equation}
    
    To study the term $J_1$, we replace $w_1$ and $h^\om$ in \eqref{P2} by $w_1\chi_{[t_1,t_2]}$ and $h^\om\chi_{[t_1,t_2]}$, respectively, where $\chi$ is the index function, performing the same calculation as in \eqref{3.36}-\eqref{P14} one can deduce
    \begin{equation}\label{P7}
        \begin{aligned}
            J_1 \lesssim & ~\ ||w_1\chi_{[t_1,t_2]}||_{X_{\tau_\om,1}}^2 + ||h^\om\chi_{[t_1,t_2]} ||_{X_{\tau_\om,1}}^2 + ||w_1\chi_{[t_1,t_2]}||_{X_{{\tau_\om},2}}^2 +||h^\om\chi_{[t_1,t_2]}||_{X_{{\tau_\om},2}}^2 
            \xrightarrow{t_2 \rightarrow t_1} 0.
        \end{aligned}
    \end{equation}
    Using Lemma \ref{PQ} and \eqref{P2}, it is clear that
    \begin{equation}\label{P8}
        \begin{aligned}
            J_2\lesssim & 
            \left[ \left( \frac{t_2}{t_1} \right)^{\mu_s} - 1 \right] \left|\left| t_1^{\mu_s} K(w_1)(t_1)  \right|\right|_{\Dot{H}_x^{2\al(\mu_s-\ga)+\al-1}} \\ 
            \lesssim & \left[ \left( \frac{t_2}{t_1} \right)^{\mu_s} - 1 \right] ||t^{\mu_s} K(w_1)||_{L_{\tau_\om}^\infty \Dot{H}_x^{2\al(\mu_s-\ga)+\al-1}} \xrightarrow{t_2 \rightarrow t_1} 0.
        \end{aligned}
    \end{equation}
    Note that $e^{-t(-\Delta)^\al} F$ is the solution to the linear problem $\eqref{FE}$ with the initial data $F$, hence by using \eqref{P2} again we obtain
    \begin{equation}\label{P9}
        J_3 \xrightarrow{t_2 \rightarrow t_1} 0.
    \end{equation}
    Combining \eqref{P7}-\eqref{P9} we obtain that the right hand side of \eqref{P5} tends to zero as $t_2\rightarrow t_1$.

    When $t_1=0$, similar to \eqref{P7} we have
    \begin{equation}\label{P10}
        \begin{aligned}
            & \, ||t_2^{\mu_s} K(w_1)(t_2)||_{\Dot{H}_x^{2\al(\mu_s-\ga)+\al-1}} \\
            = & \left|\left| t_2^{\mu_s} \int_0^{t_2} e^{-(t_2-s)(-\Delta)^\al} \Big( B(w_1,w_1) + B(w_1,h^\om) + B(h^\om,w_1) + B(h^\om,h^\om) \Big) ds \right|\right|_{\Dot{H}_x^{2\al(\mu_s-\ga)+\al-1}} \\
           & \xrightarrow{t_2 \rightarrow 0} 0.
        \end{aligned}
    \end{equation}
    Combining \eqref{P5}-\eqref{P10} we obtain
    \begin{equation}
        t^{\mu_s} K(w_1) \in C\left( [0,{\tau_\om}]; \Dot{H}^{2\al(\mu_s -\ga)+\al-1}(\mathbb{T}^d) \right),
    \end{equation}
    from which and $w_1 = K(w_1)$ we conclude \eqref{P1}. 
\end{proof}

\iffalse

\begin{remark}
    Indeed, for any $s\in (1-2\al,-\frac{\al}{2})$, from \eqref{P3} one obtains
    $$t^\la w_1 \in C\left([0,{\tau_\om}]; \Dot{W}_x^{2\al(\lambda - \ga)+\al-1,\frac{2d}{(3-2\ga)\al-2}}\right),$$
    where $\lambda \in [0,2\ga)$.
\end{remark}

\fi

\begin{proposition}\label{WSID2}
    
    The mild solution $w_1$ constructed in Proposition \ref{MS} satisfies
    \begin{equation}\label{EB}
        t^{\ga+\frac{1}{2\al}-\frac{1}{2}} w_1 \in C\left([0,\tau_\om];L^2(\mathbb{T}^d) \right).
    \end{equation}
\end{proposition}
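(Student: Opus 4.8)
The plan is to follow the two-step scheme of Proposition \ref{Prop}, now landing in the endpoint space $L^2=\Dot{H}^0$ with the weight $\la:=\ga+\frac{1}{2\al}-\frac12$. Observe that $\la$ is exactly the exponent for which $2\al(\mu-\ga)+\al-1=0$ when $\mu=\la$, so \eqref{EB} is the $L^2$-endpoint of the same family of estimates established in Proposition \ref{Prop} (only the weight, not the statement, differs). Since $w_1=K(w_1)$, it suffices to prove $t^\la K(w_1)\in C([0,\tau_\om];L^2)$, and I would do this by first establishing the boundedness $t^\la K(w_1)\in L^\infty_{\tau_\om}L^2$ and then upgrading it to continuity. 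Recalling \eqref{IF}, the boundedness reduces to estimating $\|t^\la M(f,g)\|_{L^\infty_{\tau_\om}L^2}$ for $f,g\in\{w_1,h^\om\}$, both of which lie in $X_{\tau_\om,1}\cap Y_{\tau_\om}$ by Proposition \ref{MS} and \eqref{RH}; I would split $M(f,g)=I_1+I_2$ as in \eqref{M2}.

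For the piece $I_2=\int_{t/2}^t\cdots$ I would apply Lemma \ref{HL} with $\zeta=\al$ and $\mu=\la$, which bounds $\|t^\la I_2\|_{L^\infty_{\tau_\om}L^2}$ by $\|\Lambda^{-\al}B(f,g)\|_{L^2_{\la;\tau_\om}L^2}$. Using \eqref{M11} with $\be=\al$ (legitimate since $\al\in(\frac23,1]\subset[0,1]$) together with the torus embedding $L^p\hookrightarrow L^4$ (valid because $p=\frac{2d}{(3-2\ga)\al-2}\geq 4$ for $d\geq2$), the integrand is controlled by $\|f\|_{\Dot{W}_x^{1-\al,p}}\|g\|_{L_x^p}+\|f\|_{L_x^p}\|g\|_{\Dot{W}_x^{1-\al,p}}$. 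A Hölder split in time placing $t^\la$ on the $\Dot{W}^{1-\al,p}$ factor, with the conjugate exponents $\frac{4}{1-2\ga}$ and $\frac{4}{1+2\ga}$ (which sum to $\frac12$), then gives $\|t^\la I_2\|_{L^\infty_{\tau_\om}L^2}\lesssim \|f\|_{X_{\tau_\om,1}}\|g\|_{Y_{\tau_\om}}+\|f\|_{Y_{\tau_\om}}\|g\|_{X_{\tau_\om,1}}$.

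The piece $I_1=\int_0^{t/2}\cdots$ is the main obstacle: the time singularity of $f,g$ at $s=0$ rules out a direct use of Lemma \ref{ML}, whose input space $L^2_{\tau_\om}L^2$ is unweighted. I would instead argue by hand, using the torus embedding $L^{p/2}\hookrightarrow L^2$ (again $p/2\geq2$), Lemma \ref{PQ} with $\nu=1$ to pass from $L^{p/2}$ to $L^2$ at the cost of $(t-s)^{-\frac{1}{2\al}}$, and \eqref{M11} with $\be=1$, to get $\|I_1\|_{L^2}\lesssim t^{-\frac{1}{2\al}}\int_0^{t/2}\|f\|_{L_x^p}\|g\|_{L_x^p}\,ds$ (since $t-s\sim t$ on $[0,t/2]$). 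Writing $\|f\|_{L_x^p}\|g\|_{L_x^p}=s^{-2\ga}(s^\ga\|f\|_{L_x^p})(s^\ga\|g\|_{L_x^p})$ and applying Hölder in $s$ with exponents $\frac{2}{1+2\ga},\frac{4}{1-2\ga},\frac{4}{1-2\ga}$, the factor $\int_0^{t/2}s^{-\frac{4\ga}{1+2\ga}}\,ds$ converges precisely because $\ga<\frac12$ throughout $\al\in(\frac23,1]$, and evaluates to order $t^{\frac{1-2\ga}{2}}$. Combined with the prefactor $t^{\la-\frac{1}{2\al}}=t^{\ga-\frac12}$, all powers of $t$ cancel, yielding the uniform bound $\|t^\la I_1\|_{L^\infty_{\tau_\om}L^2}\lesssim \|f\|_{X_{\tau_\om,1}}\|g\|_{X_{\tau_\om,1}}$. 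Summing the four contributions of \eqref{IF} gives $t^\la K(w_1)\in L^\infty_{\tau_\om}L^2$.

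For continuity I would reproduce the structure of \eqref{P5}--\eqref{P10} with $\Dot{H}^{2\al(\mu_s-\ga)+\al-1}$ replaced by $L^2$ and $\mu_s$ by $\la$. For $0<t_1<t_2\leq\tau_\om$, I split the increment into $J_1+J_2+J_3$: the term $J_1$ is handled by rerunning the boundedness estimate with $f,g$ replaced by $f\chi_{[t_1,t_2]},g\chi_{[t_1,t_2]}$, whose $X_{\tau_\om,1}$ and $Y_{\tau_\om}$ norms tend to $0$ as $t_2\to t_1$ by absolute continuity of the integral; $J_2$ uses the $L^\infty L^2$ bound just obtained together with the $L^2$-contractivity of $e^{-\tau(-\Delta)^\al}$ and $(t_2/t_1)^\la\to1$; and $J_3\to0$ by the strong $L^2$-continuity of the semigroup applied to $t_1^\la K(w_1)(t_1)\in L^2$. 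The case $t_1=0$ follows directly from the boundedness estimate run over $[0,t_2]$. This gives $t^\la K(w_1)\in C([0,\tau_\om];L^2)$, and since $w_1=K(w_1)$ we conclude \eqref{EB}.
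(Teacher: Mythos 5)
Your proof is correct and follows essentially the same route as the paper: split $M(f,g)$ at $t/2$, treat the far-history piece by the smoothing estimate of Lemma \ref{PQ} with the weight trick $s^{-2\ga}s^{2\ga}$ and Hölder with exponents $\frac{2}{1+2\ga},\frac{4}{1-2\ga},\frac{4}{1-2\ga}$, treat the recent piece by Lemma \ref{HL} reducing to $\|\Lambda^{-\al}B(f,g)\|_{L^2_{\la;\tau_\om}L^2}$ plus \eqref{M11} with $\be=\al$, and then run the $J_1,J_2,J_3$ continuity argument of Proposition \ref{Prop}. The only (harmless) deviation is that your hands-on estimate of $I_1$ works uniformly in $\ga\in[0,\tfrac12)$, whereas the paper splits into $\ga>0$ (same computation) and $\ga=0$ (Lemma \ref{ML} with $\zeta=1$), and you invoke Lemma \ref{HL} with $\zeta=\al$ rather than $\zeta=1$ — these give identical bounds.
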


\begin{proof}
We first show that
\begin{equation}\label{KB}
        t^{\ga+\frac{1}{2\al}-\frac{1}{2}} K(w_1) \in L_{\tau_\om}^\infty L^2(\mathbb{T}^d).
    \end{equation}
For any $t \in (0,\tau_\om]$, we have
\begin{equation}\label{EB1}
\begin{aligned}
 \left|\left|t^{\ga+\frac{1}{2\al}-\frac{1}{2}} M(f,g)\right|\right|_{L_x^2} & \leq     \left|\left|\int_0^\frac{t}{2} t^{\ga+\frac{1}{2\al}-\frac{1}{2}} e^{-(t-s)(-\Delta)^\al} B(f,g) ds \right|\right|_{L_x^2} \\
 & \,\,\,\,\,\, + \left|\left|\int_\frac{t}{2}^t t^{\ga+\frac{1}{2\al}-\frac{1}{2}} e^{-(t-s)(-\Delta)^\al} B(f,g) ds \right|\right|_{L_x^2} \\
 & :=K_1 + K_2. 
\end{aligned}
\end{equation}    

On one hand, when $s\in (1-2\alpha, -\frac{\al}{2})$, $\ga=\left(-\frac{1}{2}-\frac{s}{\al}\right)_+>0$ and $p=\frac{2d}{(3-2\gamma)\al-2}\ge 4$,  by performing the similar argument as given in \eqref{3.36}, we get
\begin{equation}\label{EB2}
        \begin{aligned}
        K_1 &= \int_0^\frac{t}{2} t^{\ga+\frac{1}{2\al}-\frac{1}{2}} (t-s)^{-\frac{1}{2\al}}\|\Lambda^{-1}B(f,g)\|_
        {L_x^{\frac{p}{2}}} ds\\
        & \lesssim \int_0^{\frac{t}{2}} (t-s)^{\ga-\frac{1}{2}}s^{-2\ga} s^{2\ga} ||f||_{L_x^p} ||g||_{L_x^p} ds \\
        & \lesssim \left( \int_0^{\frac{t}{2}} \left\{  (t-s)^{\ga-\frac{1}{2}}s^{-2\ga} \right\}^{\frac{2}{1+2\ga}} ds \right)^{\frac{1+2\ga}{2}} ||f||_{L_{\ga;{\tau_\om}}^{\frac{4}{1-2\ga}} L_x^p} ||g||_{L_{\ga;{\tau_\om}}^{\frac{4}{1-2\ga}} L_x^p} \\
        & \lesssim ||f||_{L_{\ga;{\tau_\om}}^{\frac{4}{1-2\ga}} L_x^p} ||g||_{L_{\ga;{\tau_\om}}^{\frac{4}{1-2\ga}} L_x^p} \\
        & \lesssim ||f||_{X_{\tau_\om,1}} ||g||_{X_{\tau_\om,1}}.
        \end{aligned}
    \end{equation}
When $s\in [-\frac{\al}{2},0)$, $\ga=0$, by using Lemma \ref{ML} with $\zeta=1$, and taking $\be=1$ and $l=2$ in \eqref{M11}, one has
\begin{equation}\label{EB3}
        \begin{aligned}
        K_1 \lesssim \, ||\Lambda^{-1} B(f,g)||_{L_{\tau_\om}^2 L_x^2} 
        \lesssim \, ||f||_{L_{\tau_\om}^4 L_x^{\frac{2d}{3\al-2}}} ||g||_{L_{\tau_\om}^4 L_x^{\frac{2d}{3\al-2}}}
        \lesssim \, ||f||_{X_{\tau_\om,1}} ||g||_{X_{\tau_\om,1}}.
        \end{aligned}
    \end{equation}
 
 On the other hand, by using Lemma \ref{HL} with $\mu = \ga + \frac{1}{2\al}-\frac{1}{2}$ and $\zeta = 1$, and taking $\beta=\al$ and $l=\frac{p}{2}$ in \eqref{M11}, we estimate $K_2$ by
 \begin{equation}\label{EB4}
     \begin{aligned}
         K_2 & \lesssim ||\Lambda^{-1} B(f,g)||_{L_{\ga+\frac{1}{2\al}-\frac{1}{2};{\tau_\om}}^2 \Dot{H}_x^{1-\al}} \\
         & \lesssim \Big|\Big| t^{\ga+\frac{1}{2\al}-\frac{1}{2}} ||\Lambda^{-\al} B(f,g)||_{L_x^{\frac{p}{2}}} \Big|\Big|_{L_{\tau_\om}^2} \\
         & \lesssim \Big|\Big| t^{\ga+\frac{1}{2\al}-\frac{1}{2}} \Big( ||f||_{L_x^p} ||\Lambda^{1-\al}g||_{L_x^p} +||g||_{L_x^p} ||\Lambda^{1-\al}f||_{L_x^p} \Big) \Big|\Big|_{L_{\tau_\om}^2} \\
         & \lesssim ||f||_{L_{\tau_\om}^{\frac{4}{1+2\ga}} L_x^p} ||g||_{L_{\ga+\frac{1}{2\al}-\frac{1}{2};{\tau_\om}}^{\frac{4}{1-2\ga}} \Dot{W}_x^{1-\al, p} } + ||g||_{L_{\tau_\om}^{\frac{4}{1+2\ga}} L_x^p} ||f||_{L_{\ga+\frac{1}{2\al}-\frac{1}{2};{\tau_\om}}^{\frac{4}{1-2\ga}} \Dot{W}_x^{1-\al, p} } \\
         & \lesssim ||f||_{Y_{\tau_\om}} ||g||_{X_{\tau_\om,1}} + ||g||_{Y_{\tau_\om}} ||f||_{X_{\tau_\om,1}}.
     \end{aligned}
 \end{equation}
Due to $w_1, h^\om \in Y_{\tau_\om} \cap X_{\tau_\om,1}$, we conclude \eqref{KB} by combining \eqref{EB1}-\eqref{EB4}. 

The continuity given in \eqref{EB} can be obtained in a way similar to that given in the second part of the proof of Proposition \ref{Prop}.

\end{proof}

\begin{theorem}\label{DT}
    Let $\Si$ be the event constructed in Proposition \ref{PH}, then for any $\om \in \Si$, the problem \eqref{DE} admits a global weak solution in the sense of Definition \ref{DEWS}.
\end{theorem}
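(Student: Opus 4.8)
The plan is to glue the local mild solution $w_1$ from Proposition \ref{MS} to a globally defined energy solution, matching them on an overlap by the weak-strong uniqueness of Theorem \ref{UR}. Fix $\om\in\Si$; it suffices to produce a weak solution of \eqref{DE} on $[0,T]$ for an arbitrary $T>0$, and we may assume $T>\tau_\om$.

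First I would upgrade $w_1$ from a mild to a genuine weak solution on $[0,\tau_\om]$ in the sense of Definition \ref{DEWS}. The regularity $w_1\in X_{\tau_\om,1}\cap X_{\tau_\om,2}$ from Proposition \ref{MS}, together with $h^\om$ lying in the same spaces by Proposition \ref{PH} and the extension of $B(\cdot,\cdot)$ proved in Appendix A (Lemma \ref{B}), shows that each of $B(w_1,w_1)$, $B(w_1,h^\om)$, $B(h^\om,w_1)$, $B(h^\om,h^\om)$ belongs to $L^1(0,\tau_\om;\mathbb{V}')$; combined with $(-\Delta)^\al w_1$ this gives $\frac{dw_1}{dt}\in L^1(0,\tau_\om;\mathbb{V}')$. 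Testing the integral identity \eqref{IF} against $\phi\in\mathbb{V}$ and differentiating in $t$ recovers the weak formulation, while $\lim_{t\to 0^+}\|t^{\mu_s}w_1\|_{\Dot{H}^s}=0$ follows from the continuity at $t=0$ in Proposition \ref{Prop}. Propositions \ref{Prop} and \ref{WSID2} furnish the weighted continuity \eqref{RH1}-\eqref{RH2} on $[0,\tau_\om]$.

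Second I would restart from $\tfrac{\tau_\om}{2}$, where the singularity has disappeared: Proposition \ref{WSID2} gives $w_1(\tfrac{\tau_\om}{2})\in L^2(\mathbb{T}^d)$. Using this as initial datum, I would run the classical Galerkin/energy scheme for \eqref{DE}, treating the three $h^\om$-terms as prescribed data. The bounds $h^\om\in X_{T,3}\cap X_{T,4}$ from Proposition \ref{PH} are exactly what force the a priori energy estimate to close uniformly in the Galerkin level, yielding a weak solution $w_2$ on $[\tfrac{\tau_\om}{2},T]$ with $w_2\in C_{weak}([\tfrac{\tau_\om}{2},T];L_\si^2)\cap L^2(\tfrac{\tau_\om}{2},T;\Dot{H}_\si^\al)$ and $\frac{dw_2}{dt}\in L^1(\tfrac{\tau_\om}{2},T;\mathbb{V}')$.

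Finally, on the overlap $[\tfrac{\tau_\om}{2},\tau_\om]$ I would invoke Theorem \ref{UR}. At positive times $w_1$ carries the stronger regularity of the space $X_{\tau_\om}$ built into \eqref{RM}, so it plays the role of the strong competitor against the weak solution $w_2$ sharing the datum $w_1(\tfrac{\tau_\om}{2})$; uniqueness gives $w_1=w_2$ on $[\tfrac{\tau_\om}{2},\tau_\om]$. Setting $w:=w_1$ on $[0,\tau_\om]$ and $w:=w_2$ on $[\tfrac{\tau_\om}{2},T]$ then defines a single function consistent on the overlap, satisfying the weak formulation on all of $[0,T]$ and carrying the regularity \eqref{RH1}-\eqref{RH2}; as $T$ was arbitrary, $w$ is a global weak solution. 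The step I expect to be hardest is this overlap matching: it is precisely why the auxiliary space $X_{\tau_\om}$ with its $\Dot{W}^{\al,\frac{d}{(3-2\ga)\al-2}}$ control was built into Proposition \ref{MS}, and confirming that this regularity meets the exact hypotheses of the weak-strong inequality in Theorem \ref{UR} is delicate. A secondary difficulty is closing the energy estimate for $w_2$ against the rough forcing, where the $X_{T,3}\cap X_{T,4}$ norms must absorb $B(w_2,h^\om)+B(h^\om,w_2)+B(h^\om,h^\om)$ through H\"older and interpolation inequalities.
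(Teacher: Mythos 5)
Your proposal follows the paper's proof essentially verbatim: upgrade the mild solution $w_1$ to a weak solution on $[0,\tau_\om]$ via the regularity of Propositions \ref{MS}--\ref{WSID2}, build an energy/Galerkin solution $w_2$ on $[\frac{\tau_\om}{2},T]$ from the datum $w_1(\frac{\tau_\om}{2})$, identify the two on the overlap by Theorem \ref{UR}, and glue. One small correction: the Serrin-type hypothesis of Theorem \ref{UR} is verified in the paper with $\be=1-\al$, $r=\frac{2d}{(3-2\ga)\al-2}$, $q=\frac{4\al}{(2\ga-1)\al+2}$, i.e.\ it is the membership $w_1\in X_{\tau_\om,3}$ (the $\Dot{W}^{1-\al,\frac{2d}{(3-2\ga)\al-2}}$ control) that makes $w_1$ the strong competitor, while the space $X_{\tau_\om}$ with its $\Dot{W}^{\al,\frac{d}{(3-2\ga)\al-2}}$ and $\Dot{W}^{2\al-1,\frac{d}{(3-2\ga)\al-2}}$ components is used to bound $(-\Delta)^\al w_1$ and the time derivative in $\mathbb{V}'$.
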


\begin{proof}
    The problem \eqref{DE} shall be studied for $t$ near zero and  away from zero separately.

    (1) When $t$ near zero. 

From Proposition \ref{MS} we have a mild solution $w_1$ to the problem \eqref{DE} on $[0,\tau_\om]$, now let us show that $w_1$ is a weak solution on $[0,\tau_\om]$ of the problem \eqref{DE} in the sense of Definition \ref{DEWS}.

Similar to Lemma 11.3 given in \cite{LR}, from the definition of $K(w)$ given in \eqref{IF} we know that 
\begin{equation}\label{WS21}
    \partial_t K(w_1) = -(-\Delta)^\al K(w_1) - B(w_1,w_1) - B(w_1,h^\om) - B(h^\om,w_1) - B(h^\om,h^\om),
\end{equation}
in the sense of $\mathcal{D}'(\mathbb{T}^d)$.
Noting \eqref{V} and \eqref{RM}, we have
\begin{equation}
    ||(-\Delta)^\al K(w_1)||_{L_{\ga;{\tau_\om}}^2 \mathbb{V}'} \lesssim ||w_1||_{L_{\ga;{\tau_\om}}^2 \Dot{W}_x^{2\al-1, \frac{p}{2}}} \lesssim ||w_1||_{X_{\tau_\om}}  \lesssim C_{\tau_\om},
\end{equation}
and
\begin{equation}\label{3.56}
    \begin{aligned}
        ||B(w_1,w_1)||_{L_{\ga;{\tau_\om}}^2 \mathbb{V}'} & \lesssim ||\Lambda^{-1} B(w_1,w_1)||_{L_{\ga;{\tau_\om}}^2 L_x^{\frac{p}{2}}} \\
        & \lesssim ||w_1||_{L_{\tau_\om}^{\frac{4}{1+2\ga}} L_x^p} ||w_1||_{L_{\ga;{\tau_\om}}^{\frac{4}{1-2\ga}} L_x^p} \\
        & \lesssim ||w_1||_{Y_{\tau_\om}} ||w_1||_{X_{\tau_\om,1}} \lesssim C_{\tau_\om}.
    \end{aligned}
\end{equation}
Similar to \eqref{3.56}, by using \eqref{RH} and \eqref{RM} one obtains
\begin{equation}\label{WS22}
    ||B(w_1,h^\om) + B(h^\om,w_1) + B(h^\om,h^\om)||_{L_{\ga;{\tau_\om}}^2 \mathbb{V}'} \lesssim C_{\tau_\om}.
\end{equation}
Combining \eqref{WS21}-\eqref{WS22}, we have
\begin{equation}
    \begin{aligned}
        ||\partial_t K(w_1)||_{L_{\tau_\om}^1 \mathbb{V}'} \lesssim & ||t^{-\ga}||_{L_{\tau_\om}^2} ||\partial_t K(w_1)||_{L_{\ga;{\tau_\om}}^2 \mathbb{V}'} \\
        \lesssim & ||(-\Delta)^\al K(w_1)||_{L_{\ga;{\tau_\om}}^2 \mathbb{V}'} + ||B(w_1,w_1) + B(w_1,h^\om) + B(h^\om,w_1) + B(h^\om,h^\om)||_{L_{\ga;{\tau_\om}}^2 \mathbb{V}'} \\
        \lesssim & C_{\tau_\om},
    \end{aligned}
\end{equation}
from which we know that the equality \eqref{WS21} holds in  $L^1(0, \tau_\om; \mathbb{V}')$. Moreover, due to $w_1=K(w_1)$ one knows that $w_1$ satisfies the following equation
\begin{equation}\label{WS2}
    \partial_t w_1 = -(-\Delta)^\al w_1 - B(w_1,w_1) - B(w_1,h^\om) - B(h^\om,w_1) - B(h^\om,h^\om),
\end{equation}
in $L^1(0, \tau_\om; \mathbb{V}')$. Meanwhile, Propositions \ref{MS}-\ref{WSID2} implies
$$t^{\ga+\frac{1}{2\al}-\frac{1}{2}} w_1 \in L^\infty \left(0,{\tau_\om}; L_\si^2(\mathbb{T}^d)\right) \cap L^2\left( 0,{\tau_\om}; \Dot{H}_\si^\al(\mathbb{T}^d) \right),$$
$$t^{\mu_s}w_1 \in C\left( [0,{\tau_\om}]; \Dot{H}^s(\mathbb{T}^d) \right),$$
and
$$\lim_{t \rightarrow 0^+ } ||t^{\mu_s}w_1(t)||_{\Dot{H}_x^s} = 0.$$
Therefore, $w_1$ is a weak solution to the problem \eqref{DE} in the sense of Definition \ref{DEWS} on $[0,{\tau_\om}]$.

(2) When $t$ away from zero.

Note that Proposition \ref{WSID2} implies
\begin{equation}
    ||w_1\left(\frac{{\tau_\om}}{2}\right)||_{L_x^2} \lesssim \left( \frac{{\tau_\om}}{2} \right)^{-\ga-\frac{1}{2\al}+\frac{1}{2}} ||w_1||_{L_{\ga+\frac{1}{2\al}-\frac{1}{2}; {\tau_\om}}^\infty L_x^2} \lesssim C_{\tau_\om}.
\end{equation}
Now, take $w_1\left(\frac{{\tau_\om}}{2} \right)$ as a new initial data of the problem \eqref{DE} and use energy method on $\left[\frac{{\tau_\om}}{2}, T\right]$.

Applying the standard energy estimate in the problem \eqref{DE}, we get
\begin{equation}\label{WS4}
    \frac{1}{2}\frac{d}{dt}||w||_{L_x^2}^2 + ||\Lambda^\al w||_{L_x^2}^2 = \langle B(w,w), h^\om \rangle + \langle B(h^\om,w), h^\om \rangle.
\end{equation}

By using \eqref{M11} and the Gagliardo–Nirenberg inequality,  we have
\begin{equation}\label{WS5}
    \begin{aligned}
        |\langle B(w,w), h^\om \rangle| & \lesssim  ||\Lambda^{\al-1} B(w,w) ||_{\frac{2d}{2d+2-(3-2\ga)\al}} ||\Lambda^{1-\al} h^\om||_{L_x^{\frac{2d}{(3-2\ga)\al-2}}} \\
        & \lesssim  ||w||_{L_x^{\frac{2d}{d+2-(3-2\ga)\al}}} ||\Lambda^\al w||_{L_x^2}  ||\Lambda^{1-\al} h^\om||_{L_x^{\frac{2d}{(3-2\ga)\al-2}}} \\
        & \lesssim ||w||_{L_x^2}^{1-\theta} ||\Lambda^\al w||_{L_x^2}^{1+\theta} ||\Lambda^{1-\al} h^\om||_{L_x^{\frac{2d}{(3-2\ga)\al-2}}} \\
        & \leq \frac{1}{4} ||\Lambda^\al w||_{L_x^2}^2 + C||w||_{L_x^2}^2 ||\Lambda^{1-\al} h^\om||_{L_x^{\frac{2d}{(3-2\ga)\al-2}}}^{\frac{2}{1-\theta}},
    \end{aligned}
\end{equation}
with $\theta = -\ga - \frac{1}{\al} + \frac{3}{2}$. On the other hand, it is easy to have
\begin{equation}\label{WS6}
    \begin{aligned}
        |\langle B(h^\om,w), h^\om \rangle| & \lesssim ||\Lambda^\al w||_{L_x^2} ||\Lambda^{1-\al} (h^\om \otimes h^\om)||_{L_x^2} \\
        & \lesssim ||\Lambda^\al w||_{L_x^2} ||\Lambda^{1-\al} h^\om||_{L_x^4} ||h^\om||_{L_x^4} \\
        & \lesssim ||\Lambda^\al w||_{L_x^2} ||\Lambda^{1-\al} h^\om||_{L_x^{\frac{2d}{(3-2\ga)\al-2}}} ||h^\om||_{L_x^{\frac{2d}{(3-2\ga)\al-2}}} \\
        & \leq \frac{1}{4} ||\Lambda^\al w||_{L_x^2}^2 + C ||\Lambda^{1-\al} h^\om||_{L_x^{\frac{2d}{(3-2\ga)\al-2}}}^2 ||h^\om||_{L_x^{\frac{2d}{(3-2\ga)\al-2}}}^2.
    \end{aligned}
\end{equation}
Plugging \eqref{WS5} and \eqref{WS6} into \eqref{WS4}, it follows
\begin{equation}\label{WS7}
    \frac{d}{dt}||w||_{L_x^2}^2 + ||\Lambda^\al w||_{L_x^2}^2 \leq C \left( ||w||_{L_x^2}^2 ||\Lambda^{1-\al} h^\om||_{L_x^p}^{\frac{4\al}{(2\ga-1)\al+2}} +||\Lambda^{1-\al} h^\om||_{L_x^p}^2 ||h^\om||_{L_x^p}^2 \right),
\end{equation}
with $p=\frac{2d}{(3-2\ga)\al-2}$ being defined in \eqref{p}, from which and the Gronwall's inequality one obtains for any given $T>0$, and $t\in [\frac{\tau_\om}{2}, T]$,
\begin{equation}\label{WS8}
    E_w(t) \leq C \left(  || w\left( \frac{{\tau_\om}}{2} \right) ||_{L_x^2}^2 + \int_{\frac{{\tau_\om}}{2}}^T ||\Lambda^{1-\al} h^\om||_{L_x^p}^2 ||h^\om||_{L_x^p}^2 ds \right) \exp{\left( \int_{\frac{{\tau_\om}}{2}}^T ||\Lambda^{1-\al} h^\om||_{L_x^p}^{\frac{4\al}{(2\ga-1)\al+2}} ds  \right)},
\end{equation}
where
\begin{equation}\label{WS3}
    E_w(t)=\sup_{s \in \left[ \frac{{\tau_\om}}{2},t \right]} ||w||_{L_x^2}^2 + \int_{\frac{{\tau_\om}}{2}}^t ||\Lambda^\al w||_{L_x^2} ds.
\end{equation}

Thanks to \eqref{RH}, one has
\begin{equation}\label{WS9}
    \begin{aligned}
        \int_{\frac{{\tau_\om}}{2}}^T ||\Lambda^{1-\al} h^\om||_{L_x^p}^2 ||h^\om||_{L_x^p}^2 ds &
        \lesssim  \left( \frac{{\tau_\om}}{2} \right)^{-2(\ga+\frac{1}{2\al}-\frac{1}{2})} ||h^\om||_{L_T^{\frac{4}{1+2\ga}} L_x^p}^2 ||h^\om||_{L_{\ga+\frac{1}{2\al}-\frac{1}{2};T}^{\frac{4}{1-2\ga}} \Dot{W}_x^{1-\al,p}}^2 \\
        & \lesssim  \left( \frac{{\tau_\om}}{2} \right)^{-2(\ga+\frac{1}{2\al}-\frac{1}{2})} ||h^\om||_{Y_T}^2 ||h^\om||_{X_{T,1}}^2 
        \leq  C_{\tau_\om},
    \end{aligned}
\end{equation}
and
\begin{equation}\label{WS10}
    \int_{\frac{\tau_\om}{2}}^T ||\Lambda^{1-\al} h^\om||_{L_x^p}^{\frac{4\al}{(2\ga-1)\al+2}} ds \leq ||h^\om||_{L_T^{\frac{4\al}{(2\ga-1)\al+2}} \Dot{W}_x^{1-\al,p}}^{\frac{4\al}{(2\ga-1)\al+2}} = ||h^\om||_{X_{T,3}}^{\frac{4\al}{(2\ga-1)\al+2}} \leq C.
\end{equation}
Thus, from \eqref{WS8} we obtain the following a priori bound 
\begin{equation}\label{WS16}
    E_w(t) \leq C_{\tau_\om}.
\end{equation}

Next, we estimate the $L^1 \left(\frac{{\tau_\om}}{2},T;\mathbb{V}'\right)$ norm of $\partial_tw$. From \eqref{EE} and Lemma \ref{B} we find for any $f \in L_\si^{\min\left\{4, \frac{2d}{d-2\al} \right\}}$ and $g\in L_x^{\min\left\{4, \frac{2d}{d-2\al} \right\}}$, it holds that
\begin{equation}\label{WS11}
    ||B(f,g)||_{\mathbb{V}'} \leq ||f||_{L_x^{\min\left\{4, \frac{2d}{d-2\al} \right\}}} ||g||_{L_x^{\min\left\{4, \frac{2d}{d-2\al} \right\}}},
\end{equation}
from which and the Sobolev inequality we get
\begin{equation}\label{WS12}
    ||B(w,w)||_{\mathbb{V}'} \leq ||w||_{L_x^{\min\left\{4, \frac{2d}{d-2\al} \right\}}}^2 \lesssim ||\Lambda^\al w||_{L_x^2}^2,
\end{equation}
which implies
    \begin{equation}\label{WS17}
        ||B(w,w)||_{L^1\left(\frac{{\tau_\om}}{2},T; \mathbb{V}'\right)} \lesssim ||w||_{L^2\left( \frac{{\tau_\om}}{2},T; \Dot{H}_x^\al \right) }^2.
    \end{equation}
    Similarly, 
    \begin{equation}\label{WS15}
        ||B(w,h^\om)||_{\mathbb{V}'} \leq ||w||_{L_x^{\min\left\{4, \frac{2d}{d-2\al} \right\}}} ||h^\om||_{L_x^{\min\left\{4, \frac{2d}{d-2\al} \right\}}} \leq ||w||_{L_x^{\min\left\{4, \frac{2d}{d-2\al} \right\}}}^2 + ||h^\om||_{L_x^{\min\left\{4, \frac{2d}{d-2\al} \right\}}}^2.
    \end{equation}
    Due to \eqref{RH} we find
    \begin{equation}
        \int_{\frac{{\tau_\om}}{2}}^T||h^\om||_{L_x^{\min\left\{4, \frac{2d}{d-2\al} \right\}}}^2 ds \lesssim ||h^\om||_{L_T^2 L_x^p}^2 \lesssim ||h^\om||_{L_T^{\frac{4}{1+2\ga}} L_x^p}^2 \lesssim ||h^\om||_{Y_T}^2 \leq C,
    \end{equation}
    from which and \eqref{WS15} one obtains
    \begin{equation}\label{WS18}
        ||B(w,h^\om)||_{L^1\left(\frac{{\tau_\om}}{2},T; \mathbb{V}'\right)} \leq C\left( 1 + ||w||_{L^2\left( \frac{{\tau_\om}}{2},T; \Dot{H}_x^\al \right) }^2 \right).
    \end{equation}
    In the same way, we can get the similar bound for $B(h^\om,w)$ and $B(h^\om,h^\om)$ in the space $L^1(\frac{{\tau_\om}}{2},T;\mathbb{V}')$. 

    In addition, it is clear that
    \begin{equation}\label{WS19}
        ||(-\Delta)^\al w||_{L^1\left(\frac{{\tau_\om}}{2},T; \mathbb{V}'\right)} \lesssim ||w||_{L^1\left( \frac{{\tau_\om}}{2},T; \Dot{W}_x^{2\al-1, \min\left\{2, \frac{d}{d-2\al} \right\}} \right)} \lesssim ||w||_{L^2 \left(\frac{{\tau_\om}}{2},T; \Dot{H}_x^\al \right)}.
    \end{equation}
    
    Plugging the estimates given in \eqref{WS17}, \eqref{WS18} and \eqref{WS19} into \eqref{WS2}, it follows
    \begin{equation}\label{3.77}
         ||\partial_t w||_{L^1\left(\frac{{\tau_\om}}{2},T; \mathbb{V}'\right)} 
            \leq  C\left( 1 + ||w||_{L^2\left( \frac{{\tau_\om}}{2},T; \Dot{H}_x^\al \right) }^2 \right) \\
            \leq  C_{\tau_\om}
    \end{equation}
    by using \eqref{WS16}.

    Next, we sketch the main idea to get the existence of a solution to the problem \eqref{DE} in $[\frac{\tau_\om}{2}, T]$ with $w_1\left(\frac{{\tau_\om}}{2} \right)$ as a new initial data, by using the standard Galerkin method. First, via the Galerkin approach, we get a sequence of approximate solutions $\{w^N\}_{N\in \mathbb{N}}$ of  \eqref{DE} with the data being $w_1\left(\frac{{\tau_\om}}{2} \right)$, satisfying estimates similar to those given in \eqref{WS16} and \eqref{3.77}, so this sequence is bounded in
    \begin{equation}
        \mathbb{M}=\left\{ w : w\in L^2\left(\frac{{\tau_\om}}{2},T;H_x^\al \right), \partial_t w \in L^1\left(\frac{{\tau_\om}}{2},T; \mathbb{V}'\right) \right\}.
    \end{equation}
    By using the Aubin-Lions compactness theorem, cf. \cite[pp.102-106]{BF}, one gets 
    a subsequence $\{w^{N_k}\}_{k \in \mathbb{N}}$ and $w_2$, such that
    $$\lim_{k\to +\infty}w^{N_k}=w_2, \quad {\rm in}\quad
    L^2\left(\frac{{\tau_\om}}{2},T;L_x^{\min \left\{ \frac{d}{\al}, (\frac{2d}{d-2\al})^- \right\}} \right)$$ 
    with $(\frac{2d}{d-2\al})^- = (\frac{2d}{d-2\al}) - \eta$ for a $\eta>0$. Now, we claim that the limit $w_2$ is a weak solution of the problem \eqref{DE} on $[\frac{\tau_\om}{2}, T]$. Indeed, the key point is to prove that the following term goes to zero when $k\to +\infty$,
    \begin{equation}\label{NC}       \begin{aligned}
            & \int_{\frac{{\tau_\om}}{2}}^t \int_{\mathbb{T}^d} (w^{N_k} \cdot \nabla) \phi^{N_k} \cdot w^{N_k} dxds - \int_{\frac{{\tau_\om}}{2}}^t \int_{\mathbb{T}^d} (w_2 \cdot \nabla) \phi \cdot w_2 \, dxds \\
            = & \int_{\frac{{\tau_\om}}{2}}^t \int_{\mathbb{T}^d}(w^{N_k} \cdot \nabla) \phi^{N_k} \cdot (w^{N_k}-w_2) dx ds + \int_{\frac{{\tau_\om}}{2}}^t \int_{\mathbb{T}^d}((w^{N_k}-w_2) \cdot \nabla) \phi \cdot w_2 dx ds \\
            & + \int_{\frac{{\tau_\om}}{2}}^t \int_{\mathbb{T}^d}(w^{N_k} \cdot \nabla) (\phi^{N_k} - \phi) \cdot w_2 dx ds,
        \end{aligned}
    \end{equation}
for any $\phi \in \mathbb{V}$, 
    where $\phi^{N_k}=\mathrm{P}_{N_k} \phi$ is the finite dimensional projection of $\phi$. When $\al>\frac{d}{4}$, one has
    \begin{equation}\label{N1}
        \begin{aligned}
            & \left| \int_{\frac{{\tau_\om}}{2}}^t \int_{\mathbb{T}^d}(w^{N_k} \cdot \nabla) \phi^{N_k} \cdot (w^{N_k}-w_2) dx ds \right| \\
            \leq & \, ||w^{N_k}-w_2||_{L^2 ( \frac{{\tau_\om}}{2},T; L_x^{\frac{d}{\al}} )}  ||\phi^{N_k}||_{\Dot{H}_x^1} ||w^{N_k}||_{L^2 ( \frac{{\tau_\om}}{2},T; L_x^{\frac{2d}{d-2\al}} )} \\
            \leq & \, ||w^{N_k}-w_2||_{L^2 ( \frac{{\tau_\om}}{2},T; L_x^{\frac{d}{\al}} )} ||\phi^{N_k}||_{\mathbb{V}} ||w^{N_k}||_{L^2 ( \frac{{\tau_\om}}{2},T; \Dot{H}_x^\al )} \xrightarrow{k \rightarrow \infty} 0,
        \end{aligned}
    \end{equation}
    Similarly, when $\al \leq \frac{d}{4}$, we obtain
    \begin{equation}\label{N2}
        \begin{aligned}
            & \left| \int_{\frac{{\tau_\om}}{2}}^t \int_{\mathbb{T}^d}(w^{N_k} \cdot \nabla) \phi^{N_k} \cdot (w^{N_k}-w_2) dx ds \right| \\
            \leq & \, ||w^{N_k}-w_2||_{L^2 ( \frac{{\tau_\om}}{2},T; L_x^{(\frac{2d}{d-2\al})^-} )}  ||\phi^{N_k}||_{\Dot{W}_x^{1,(\frac{d}{2\al})^+}} ||w^{N_k}||_{L^2 (\frac{{\tau_\om}}{2},T; L_x^{\frac{2d}{d-2\al}} )} \\
            \leq & \, ||w^{N_k}-w_2||_{L^2 ( \frac{{\tau_\om}}{2},T; L_x^{(\frac{2d}{d-2\al})^-} )} ||\phi^{N_k}||_{\mathbb{V}} ||w^{N_k}||_{L^2 ( \frac{{\tau_\om}}{2},T; \Dot{H}_x^\al )} \\
            &
            \longrightarrow 0, \quad {\rm as}\quad  k \rightarrow +\infty.
        \end{aligned}
    \end{equation}
    From \eqref{N1}-\eqref{N2} we conclude
    \begin{equation}
        \lim_{k\to +\infty}
        \int_{\frac{{\tau_\om}}{2}}^t \int_{\mathbb{T}^d} \left((w^{N_k} \cdot \nabla) \phi^{N_k} \cdot w^{N_k} -(w^{N_k} \cdot \nabla) \phi^{N_k} \cdot w_2 \right) dxds=0.
    \end{equation}
    The convergence of other terms on the right hand side of \eqref{NC} is easily obtained by using the convergence of $\{w^{N_k}\}_{k \in \mathbb{N}}$ and $\{\phi^{N_k}\}_{k \in \mathbb{N}}$, hence the left hand side of \eqref{NC} tends to zero as $k \rightarrow \infty$. Therefore, we obtain a solution $w_2$ defined on $\left[\frac{{\tau_\om}}{2},T\right]$, and satisfying
    \begin{equation}
        w_2 \in C_{weak} \left(\left[\frac{{\tau_\om}}{2},T\right];L_\si^2(\mathbb{T}^d) \right) \cap L^2\left(\frac{{\tau_\om}}{2},T;\Dot{H}_\si^\al(\mathbb{T}^d) \right),
    \end{equation}
    \begin{equation}
        \partial_t w_2 \in L^1\left( \frac{{\tau_\om}}{2},T;\mathbb{V}' \right),
    \end{equation}
    \begin{equation}
        \lim_{t\rightarrow (\frac{{\tau_\om}}{2})+0} ||w_2(t)-w_1(\frac{{\tau_\om}}{2})||_{L_x^2} = 0,
    \end{equation}
    and the energy inequality
    \begin{equation}
    ||w_2(t)||_{L_x^2}^2 + 2\int_{\frac{{\tau_\om}}{2}}^t ||\Lambda^\al w_2|| ds \leq ||w_1(\frac{{\tau_\om}}{2})||_{L_x^2}^2 + 2\int_{\frac{{\tau_\om}}{2}}^t \langle B(w_2,w_2), h^\om \rangle + \langle B(h^\om,w_2),h^\om \rangle ds,
    \end{equation}
    where $t \in \left[ \frac{{\tau_\om}}{2},T \right]$.

    By the weak-strong uniqueness result,  Theorem \ref{UR}, which we shall establish below,  we know that $w_1$ coincides with $w_2$ on $\left[ \frac{{\tau_\om}}{2},{\tau_\om} \right]$. More precisely, from \eqref{RM}, \eqref{P1} and \eqref{EB}, we have
    \begin{equation}
        w_1 \in L^\infty \left( \frac{{\tau_\om}}{2},{\tau_\om}; L_x^2 \right) \cap L^2 \left( \frac{{\tau_\om}}{2},{\tau_\om}; \Dot{H}_x^\al \right) \cap L^{\frac{4\al}{(2\ga-1)\al+2}} \left( \frac{{\tau_\om}}{2},{\tau_\om}; \Dot{W}_x^{1-\al,\frac{2d}{(3-2\ga)\al-2}} \right),
    \end{equation}
    and
    \begin{equation}
        \lim_{t\rightarrow \frac{{\tau_\om}}{2}+0} ||w_1(t)-w_1(\frac{{\tau_\om}}{2})||_{L_x^2} = 0.
    \end{equation}
    On the other hand, from \eqref{RH}, we know
    \begin{equation}
        h^\om  \in X_{\tau_\om,3} \cap X_{\tau_\om,4} \subset L^\infty \left( \frac{{\tau_\om}}{2},{\tau_\om}; L_x^2 \right) \cap L^2 \left( \frac{{\tau_\om}}{2},{\tau_\om}; \Dot{H}_x^\al \right) \cap L^{\frac{4\al}{(2\ga-1)\al+2}} \left( \frac{{\tau_\om}}{2},{\tau_\om}; \Dot{W}_x^{1-\al,\frac{2d}{(3-2\ga)\al-2}} \right).
    \end{equation}
    Hence, by using Theorem \ref{UR} on $\left[ \frac{{\tau_\om}}{2}, {\tau_\om} \right]$ with $\be=1-\al$, $r=\frac{2d}{(3-2\ga)\al-2}$ and $q=\frac{4\al}{(2\ga-1)\al+2}$, we know that $w_1=w_2$ on $\left[ \frac{{\tau_\om}}{2}, {\tau_\om} \right]$.
    
    Thus, through defining
    \begin{equation}\label{WS20}
        w(t)=\left\{
        \begin{aligned}
        & w_1(t),\,\,\,\,\,\,\,\,\,t\in [0,{\tau_\om}],\\
        & w_2(t),\,\,\,\,\,\,\,\,\,t\in [{\tau_\om},T],
        \end{aligned}
        \right.
    \end{equation}
    it is obvious that $w(t)$ is a weak solution to the initial value problem \eqref{DE} on $[0,T]$ in the sense of Definition \ref{DEWS}.
\end{proof}

Inspired by \cite{RF}, we have the following weak-strong type uniqueness result for weak solutions, and its proof shall be given in Appendix \ref{WS}.

\begin{theorem}\label{UR} For any
    given divergence free-field $v_0 \in L^2(\mathbb{T}^d)$, $0 \leq \tau_1 < \tau_2$ and divergence-free field $\psi \in L^\infty \left(\tau_1,\tau_2; L^2(\mathbb{T}^d)\right) \cap L^2 \left(\tau_1,\tau_2; \Dot{H}^\al(\mathbb{T}^d)\right) \cap L^q \left(\tau_1,\tau_2; \Dot{W}^{\be,r}(\mathbb{T}^d)\right)$ with $\frac{2\al}{q} + \frac{d}{r} = 2\al-1+\be$ for some $ \be \in \left[1-\al, \frac{d}{r}+1-\al \right) $ and $r \in \left( \frac{d}{\al}, \infty \right)$, assume that $v_1, v_2 \in L^\infty \left(\tau_1,\tau_2; L^2(\mathbb{T}^d)\right) \cap L^2 \left(\tau_1,\tau_2; \Dot{H}^\al(\mathbb{T}^d) \right)$ are two weak solutions of the following system in the sense of $\mathcal{D}'(\mathbb{T}^d)$,
    \begin{equation}\label{UR11}
        \left\{
        \begin{aligned}
            & \partial_t v + (-\Delta)^\al v + B(v,v) + B(v,\psi) + B(\psi,v) + B(\psi,\psi) = 0, \\
            & \mathrm{div}\,v=0,
          \end{aligned}
        \right.
    \end{equation}
    and
     \begin{equation}\label{UR12}
             \lim_{t \rightarrow {\tau_1}+0} ||v(t) - v_0||_{L_x^2} = 0.
     \end{equation}    
    Furthermore, if $v_1 \in L^q \left(\tau_1,\tau_2; \Dot{W}_x^{\be,r}(\mathbb{T}^d) \right)$ and $v_2$ satisfies
    \begin{equation}\label{UR1}
    ||v_2(t)||_{L_x^2}^2 + 2\int_{\tau_1}^t ||\Lambda^\al v_2|| ds \leq ||v_2(\tau_1)||_{L_x^2}^2 + 2\int_{\tau_1}^t \langle B(v_2,v_2),\psi \rangle + \langle B(\psi,v_2),\psi \rangle ds,
    \end{equation}
    for any $t \in (\tau_1,\tau_2]$, then $v_1=v_2$ on $[\tau_1,\tau_2]$.
\end{theorem}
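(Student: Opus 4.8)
The plan is to carry out a relative-energy (weak--strong) argument for the difference $w=v_1-v_2$, exploiting that $v_1$ carries the extra Serrin--Prodi--Ladyzhenskaya regularity $L^q\Dot{W}^{\be,r}$ while $v_2$ is only assumed to satisfy the energy inequality \eqref{UR1}. The algebraic key is the decomposition
\begin{equation*}
B(v_1,v_1)-B(v_2,v_2)=B(w,v_1)+B(v_2,w),
\end{equation*}
so that if one formally tests the difference of the two copies of \eqref{UR11} against $w$ and uses the cancellations $b(v_2,w,w)=0$ and $b(\psi,w,w)=0$, valid because the first argument is divergence-free, the only surviving nonlinear contributions are $b(w,v_1,w)$ and $b(w,\psi,w)$, i.e. terms in which the derivative falls on the \emph{regular} fields $v_1$ and $\psi$ rather than on the merely weak $v_2$. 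Since neither $v_1$ nor $v_2$ is a priori an admissible test function (both are only weak, with $\partial_t v_i\in L^1\mathbb{V}'$), I would not test the difference directly but instead assemble the estimate from rigorously justified pieces.

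First I would establish, by a Steklov time-averaging of \eqref{UR11} followed by testing against $v_1$ and passing to the limit, the energy \emph{equality}
\begin{equation*}
\tfrac12\|v_1(t)\|_{L^2}^2+\int_{\tau_1}^t\|\Lambda^\al v_1\|_{L^2}^2\,ds=\tfrac12\|v_0\|_{L^2}^2+\int_{\tau_1}^t\Big(\langle B(v_1,v_1),\psi\rangle+\langle B(\psi,v_1),\psi\rangle\Big)\,ds,
\end{equation*}
the membership $v_1\in L^q\Dot{W}^{\be,r}$ being what makes every pairing finite and the limit licit. The same averaging, applied by testing the $v_1$-equation with $v_2$ and the $v_2$-equation with $v_1$ and adding, yields a mixed identity for $\langle v_1(t),v_2(t)\rangle$. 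Combining this identity, the equality above, and the inequality \eqref{UR1} through $\tfrac12\|w\|_{L^2}^2=\tfrac12\|v_1\|_{L^2}^2+\tfrac12\|v_2\|_{L^2}^2-\langle v_1,v_2\rangle$, the three dissipation contributions recombine into $-\|\Lambda^\al w\|_{L^2}^2$ and the nonlinearities collapse, as the formal computation predicts, to
\begin{equation*}
\tfrac12\|w(t)\|_{L^2}^2+\int_{\tau_1}^t\|\Lambda^\al w\|_{L^2}^2\,ds\le\int_{\tau_1}^t\Big(|\langle B(w,w),v_1\rangle|+|\langle B(w,w),\psi\rangle|\Big)\,ds;
\end{equation*}
here the inequality direction is inherited from \eqref{UR1} because $\|v_2\|_{L^2}^2$ enters with a positive sign, and $w(\tau_1)=0$ follows from \eqref{UR12} holding for both solutions.

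It then remains to control the two surviving terms. Writing $|\langle B(w,w),v_1\rangle|\le\|\Lambda^{-\be}B(w,w)\|_{L^{r'}}\|v_1\|_{\Dot{W}^{\be,r}}$ with $\tfrac1{r'}+\tfrac1r=1$, I would invoke the bilinear bound \eqref{M11} (hence Lemma \ref{PR}) to get $\|\Lambda^{-\be}B(w,w)\|_{L^{r'}}\lesssim\|\Lambda^{1-\be}w\|_{L^{l_1}}\|w\|_{L^{l_2}}$ with $\tfrac1{l_1}+\tfrac1{l_2}=\tfrac1{r'}$, and then interpolate both factors between $L^2$ and $\Dot{H}^\al$ by Gagliardo--Nirenberg. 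A bookkeeping of the interpolation exponents, in which the total power of $\|\Lambda^\al w\|_{L^2}$ turns out to equal $2-\tfrac2q$ precisely because of the scaling relation $\tfrac{2\al}{q}+\tfrac dr=2\al-1+\be$, gives
\begin{equation*}
|\langle B(w,w),v_1\rangle|\lesssim\|w\|_{L^2}^{2/q}\,\|\Lambda^\al w\|_{L^2}^{2-2/q}\,\|v_1\|_{\Dot{W}^{\be,r}},
\end{equation*}
the restrictions $r\in(\tfrac d\al,\infty)$ and $\be\in[1-\al,\tfrac dr+1-\al)$ being exactly what keep these exponents (and the use of \eqref{M11}, which needs $\be\in[0,1]$) admissible. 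Young's inequality splits off the dissipation, $|\langle B(w,w),v_1\rangle|\le\tfrac14\|\Lambda^\al w\|_{L^2}^2+C\|w\|_{L^2}^2\|v_1\|_{\Dot{W}^{\be,r}}^{q}$, with the identical treatment of $\langle B(w,w),\psi\rangle$.

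Absorbing the two $\tfrac14\|\Lambda^\al w\|_{L^2}^2$ into the left-hand side leaves
\begin{equation*}
\|w(t)\|_{L^2}^2\le C\int_{\tau_1}^t\Big(\|v_1\|_{\Dot{W}^{\be,r}}^{q}+\|\psi\|_{\Dot{W}^{\be,r}}^{q}\Big)\|w\|_{L^2}^2\,ds,
\end{equation*}
whose coefficient lies in $L^1(\tau_1,\tau_2)$ since $v_1,\psi\in L^q\Dot{W}^{\be,r}$; Gr\"onwall's inequality with $w(\tau_1)=0$ then forces $w\equiv0$, i.e. $v_1=v_2$ on $[\tau_1,\tau_2]$. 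I expect the principal difficulty to be the rigorous derivation of the energy equality for $v_1$ and of the mixed identity for $\langle v_1,v_2\rangle$: as the solutions are only weak with $\partial_t v_i\in L^1\mathbb{V}'$, neither testing $v_1$ against itself nor pairing one equation with the other solution is automatically legitimate, so one must regularize in time, verify that all nonlinear pairings are finite using the Serrin-type hypothesis on $v_1$, and check that the passage to the limit preserves the inequality direction coming from \eqref{UR1}. This admissibility is precisely what the extra integrability of $v_1$ provides.
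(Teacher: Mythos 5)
Your proposal is correct and follows essentially the same route as the paper's Appendix~B proof: a rigorously justified energy relation for $v_1$, a mixed identity for $\langle v_1(t),v_2(t)\rangle$ obtained by regularization (the paper uses smooth approximating sequences following Ribaud where you propose Steklov averaging, a minor technical variant), combination with the energy inequality \eqref{UR1} so that only $b(w,w,v_1)$ and $b(w,w,\psi)$ survive, and then the product rule plus Gagliardo--Nirenberg plus Young's inequality with exponent $q$ (your exponent bookkeeping $1+\theta=2-\tfrac2q$ matches the paper's Lemma~\ref{A}) followed by Gr\"onwall. No gaps.
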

\begin{remark} (1)
    For $\al=1$ and $\psi = 0$, if we take $\be=0$, then Theorem \ref{UR} is the well-known Serrin's uniqueness result, see for instance \cite{V}, for the classical Navier-Stokes equations.

(2)
    The hypotheses for the ranges of $\be$ and $r$ in Theorem \ref{UR} may be not optimal, but they are enough for our problem.

(3)
    Note that $L_T^q \Dot{W}_x^{\be,r} \subset L_T^\infty L_x^2 \cap L_T^2 \Dot{H}_x^\al$ for any $\al \geq \frac{d}{4}+\frac{1}{2}$, thus from Theorem \ref{UR} we know that for any $\al \geq \frac{d}{4}+\frac{1}{2}$, the weak solution to the problem \eqref{UR11} is unique. In particular, by choosing $\al = 1$, $d = 2$ and $\psi = 0$, it implies that the weak solution to the 2D classical Navier-Stokes equations is unique.
\end{remark}

\textbf{Proof of Theorem \ref{TW}:}

Let $\Si$ be the event constructed in Proposition \ref{PH}, for any $\om \in \Si$, let $w$ be the global weak solution to the problem \eqref{DE} obtained in Theorem \ref{DT}. Let $u = w + h^\om$, one may easily verify that $u$ is a global weak solution to the problem \eqref{GNS1} with the initial data $u_0^\om$ in the sense of Definition \ref{DW}. Thus, we conclude the results given in Theorem \ref{TW}.
\qed

\appendix

\section{Extension of the bilinear form $B(\cdot, \cdot)$}

\numberwithin{equation}{section}

\setcounter{equation}{0}

In this appendix, we shall extend the definition of the trilinear form $b(f,g,h)$ given in \eqref{1.7} and the bilinear form $B(f,g)={\rm P}(f\cdot\nabla )g$ with ${\rm P}$ being the Leray projection. Let $X(\mathbb{T}^d) = \overline{C^\infty (\mathbb{T}^d)}^{||\cdot||_X}$, $X_\si(\mathbb{T}^d) = \overline{C_\si^\infty (\mathbb{T}^d)}^{||\cdot||_X}$, $Y(\mathbb{T}^d) = \overline{C^\infty (\mathbb{T}^d)}^{||\cdot||_Y}$  and $Y_\si(\mathbb{T}^d) = \overline{C_\si^\infty (\mathbb{T}^d)}^{||\cdot||_Y}$ be four Banach spaces. Assume that the trilinear form $b(\cdot, \cdot, \cdot)$ defined in \eqref{1.7} satisfies
\begin{equation}\label{B1}
        |b(f,g,h)| \leq ||f||_X ||g||_X ||h||_Y,
    \end{equation}
for any $f \in C_\si^\infty (\mathbb{T}^d)$ and $g,h \in C^\infty (\mathbb{T}^d)$. First, one has
\begin{lemma}\label{B}
    The trilinear form $b(\cdot, \cdot, \cdot)$ can be extended to be a continuous trilinear operator from $X_\si \times X \times Y$ to $\mathbb{R}$, and \eqref{B1} holds for any $f \in X_\si$, $g \in X$ and $h\in Y$. 
    Moreover, the bilinear form $B(\cdot, \cdot)$ can be extended to be a continuous bilinear opeartor from $X_\si \times X$ to $Y_\si'$, the dual of $Y_\si$, and satisfies
    \begin{equation}\label{B2}
        ||B(f,g)||_{Y_\si'} \leq ||f||_{X} ||g||_{X}.
    \end{equation}
   
\end{lemma}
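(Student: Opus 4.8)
The plan is to prove both extensions by the standard density argument (the bounded linear, here multilinear, transformation theorem), exploiting that each of $X_\si$, $X$, $Y$, $Y_\si$ is by definition the closure of smooth (divergence-free) fields in the corresponding norm. First I would record the densities $\overline{C_\si^\infty}^{\|\cdot\|_X}=X_\si$, $\overline{C^\infty}^{\|\cdot\|_X}=X$, $\overline{C^\infty}^{\|\cdot\|_Y}=Y$ and $\overline{C_\si^\infty}^{\|\cdot\|_Y}=Y_\si$. For the trilinear form, given $f\in X_\si$, $g\in X$, $h\in Y$, I would choose approximating sequences $f_n\in C_\si^\infty$, $g_n\in C^\infty$, $h_n\in C^\infty$ converging to $f,g,h$ in the respective norms; being convergent, these sequences are in particular bounded. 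The telescoping identity
\begin{equation*}
 b(f_n,g_n,h_n)-b(f_m,g_m,h_m)=b(f_n-f_m,g_n,h_n)+b(f_m,g_n-g_m,h_n)+b(f_m,g_m,h_n-h_m),
\end{equation*}
together with the hypothesis \eqref{B1} applied to each summand, shows that $\{b(f_n,g_n,h_n)\}$ is Cauchy in $\mathbb{R}$; its limit defines $b(f,g,h)$, and the same type of estimate shows that the limit is independent of the chosen sequences. Passing to the limit in \eqref{B1} then delivers the bound for all $f\in X_\si$, $g\in X$, $h\in Y$.

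For the bilinear operator $B$, the key observation is that for $f\in C_\si^\infty$, $g\in C^\infty$ and any divergence-free $h\in C_\si^\infty$ one has $\langle B(f,g),h\rangle=b(f,g,h)$: since $\mathrm{P}$ is self-adjoint and fixes divergence-free fields, $\langle \mathrm{P}(f\cdot\nabla)g,h\rangle=\langle (f\cdot\nabla)g,\mathrm{P}h\rangle=\langle (f\cdot\nabla)g,h\rangle=b(f,g,h)$. This is precisely why the target must be the dual of $Y_\si$ rather than of $Y$, as the identity relies on $\mathrm{P}h=h$. Combining this identity with the extended bound of the first part, restricted to test functions $h\in Y_\si$ (using that $C_\si^\infty$ is dense in $Y_\si$), yields
\begin{equation*}
 |\langle B(f,g),h\rangle|\le \|f\|_X\,\|g\|_X\,\|h\|_Y,\qquad \forall\, h\in Y_\si,
\end{equation*}
so that $B(f,g)\in Y_\si'$ with $\|B(f,g)\|_{Y_\si'}\le \|f\|_X\|g\|_X$ for all smooth inputs.

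Finally, viewing $B$ as a bounded bilinear map from $C_\si^\infty\times C^\infty$ into the Banach space $Y_\si'$, I would invoke the same density/Cauchy argument (now valued in $Y_\si'$, whose completeness as a dual space guarantees convergence of the approximating net) to extend $B$ uniquely to a bounded bilinear operator $X_\si\times X\to Y_\si'$ satisfying \eqref{B2}. There is no deep obstacle here; the argument is entirely routine. The only points demanding care are the restriction of the test functions to the divergence-free space $Y_\si$, which is forced by the identity $\mathrm{P}h=h$ used above, and the verification that the limits defining the extensions do not depend on the approximating sequences, both of which follow directly from the multilinear boundedness \eqref{B1} and the telescoping identities.
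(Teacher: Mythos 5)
Your proposal is correct and follows essentially the same route as the paper: the same density/telescoping/Cauchy argument for the trilinear form, followed by the observation that $b(f,g,h)=\langle B(f,g),h\rangle$ for divergence-free $h$ to extend $B$ into $Y_\si'$. The only difference is that you explicitly justify this last identity via the self-adjointness of the Leray projection and $\mathrm{P}h=h$, a detail the paper leaves implicit.
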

\begin{proof}
    For any given $f \in X_\si$, $g \in X$ and $h \in Y$, one can choose $f^N \in C_\si^\infty \left(\mathbb{T}^d \right)$ and $g^N, h^N \in C^\infty \left(\mathbb{T}^d \right)$ such that
    \begin{equation}\label{A12}
        f^N \xrightarrow{N \rightarrow \infty} f\,\,\,\,\,\, \mathrm{in} \, X_\si,\qquad g^N \xrightarrow{N \rightarrow \infty} g\,\,\,\,\,\, \mathrm{in} \, X, \qquad
        h^N \xrightarrow{N \rightarrow \infty} h\,\,\,\,\,\, \mathrm{in} \, Y.
    \end{equation}
    From \eqref{B1}, \eqref{A12} and the trilinear property of $b(\cdot,\cdot,\cdot)$ we have
    \begin{equation}\label{A14}
        \begin{aligned}
            & \left| b(f^N,g^N,h^N) -  b(f^M,g^M,h^M) \right| \\
            \leq & \left|  b (f^N-f^M,g^N,h^N) \right| + \left| b (f^M,g^N-g^M,h^N) \right| + \left| b (f^M,g^M,h^N-h^M) \right| \\
            \leq & \, ||f^N-f^M||_{X} ||g^N||_{X} ||h^N||_{Y} +||f^M||_{X} ||g^N-g^M||_{X} ||h^N||_{Y} \\
            & +||f^M||_{X} ||g^M||_{X} ||h^N-h^M||_{Y} \xrightarrow{M,N \rightarrow \infty} 0,
        \end{aligned}
    \end{equation}
    which implies that $\left\{b(f^N,g^N,h^N)\right\}_{N\in \mathbb{N}}$ is a Cauchy sequence in $\mathbb{R}$. Moreover, for another sequences $\Tilde{f}^N \in C_\si^\infty \left(\mathbb{T}^d \right)$ and $\Tilde{g}^N,\Tilde{h}^N \in C^\infty \left(\mathbb{T}^d \right)$ satisfying
    \begin{equation}
        \Tilde{f}^N \xrightarrow{N \rightarrow \infty} f\,\,\,\,\,\, \mathrm{in} \, X_\si,\qquad
        \Tilde{g}^N \xrightarrow{N \rightarrow \infty} g\,\,\,\,\,\, \mathrm{in} \, X,\qquad 
        \Tilde{h}^N \xrightarrow{N \rightarrow \infty} h \,\,\,\,\,\,\mathrm{in}\, Y,
    \end{equation}
    in a way similar to \eqref{A14}, we can get
    \begin{equation}
         b(f^N,g^N,h^N) - b(\Tilde{f}^N,\Tilde{g}^N,\Tilde{h}^N) 
         \longrightarrow 0 \qquad {\rm as}\quad N\to +\infty,
    \end{equation}
    so the limit of the sequence $\left\{b(f^N,g^N,h^N)\right\}_{N\in \mathbb{N}}$ is independent of the choice of $\{f^N, g^N, h^N\}_{N \in \mathbb{N}}$, hence one can define
    \begin{equation}
        b(f,g,h) := \lim_{N \rightarrow \infty} b(f^N,g^N,h^N), 
    \end{equation}
    for any given $f^N \in C_\si^\infty \left(\mathbb{T}^d \right)$ and $g^N,h^N \in C^\infty \left(\mathbb{T}^d \right)$ satisfying \eqref{A12}. Therefore \eqref{B1} is valid for $f \in X_\si$, $g \in X$ and $h \in Y$.

    If we further assume $h^N \in C_\si^\infty \left(\mathbb{T}^d \right)$, then one has
    \begin{equation}
        b(f^N,g^N,h^N) = \langle B(f^N,g^N), h^N \rangle.
    \end{equation}
    In the same way as above, we can define
    \begin{equation}
        B(f,g) := \lim_{N \rightarrow \infty} B(f^N,g^N) ~\ \ \  \ \mathrm{in} \, Y_\si',
    \end{equation}
independent of the choice of $\{f^N, g^N\}_{N\in {\mathbb N}}$, and \eqref{B2} holds. 
\end{proof}

Note that for any $f^N \in C_\si^\infty (\mathbb{T}^d)$ and $g^N,h^N \in C^\infty (\mathbb{T}^d)$, it holds that
\begin{equation}\label{A15}
    b(f^N,g^N,h^N) = -b(f^N,h^N,g^N).
\end{equation}
Thus similar to Lemma \ref{B}, we have
\begin{lemma}\label{BB}
    The trilinear form $b(\cdot, \cdot, \cdot)$ can be extended to be a continuious trilinear operator from $X_\si \times Y \times X$ to $\mathbb{R}$, and it holds
    \begin{equation}
        |b(f,h,g)| \leq C ||f||_X ||h||_Y ||g||_X ,
    \end{equation}
    for any $f \in X_\si$, $g \in X$ and $h\in Y$. 
    Moreover, the bilinear form $B(\cdot, \cdot)$ can be extended to be a continuous bilinear form from $X_\si \times Y$ to $X_\si'$ and satisfy
    \begin{equation}
        ||B(f,h)||_{X_\si'} \leq ||f||_{X} ||h||_{Y},
    \end{equation}
    where $X_\si'$ denotes the dual space of $X_\si$.
\end{lemma}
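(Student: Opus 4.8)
The plan is to follow the proof of Lemma \ref{B} essentially verbatim, using the antisymmetry relation \eqref{A15} to interchange the second and third arguments of $b$. The key observation is that Lemma \ref{B} already furnishes a continuous trilinear extension of $b$ to $X_\si \times X \times Y$ together with the bilinear extension $B : X_\si \times X \to Y_\si'$; the present lemma only swaps the roles of the $X$-slot and the $Y$-slot, so no new analytic input is needed.

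For the trilinear assertion, I would fix $f \in X_\si$, $h \in Y$, $g \in X$ and approximate them by $f^N \in C_\si^\infty(\mathbb{T}^d)$, $h^N, g^N \in C^\infty(\mathbb{T}^d)$ in the respective norms. Applying \eqref{A15} and then the bound \eqref{B1} gives $|b(f^N, h^N, g^N)| = |b(f^N, g^N, h^N)| \le ||f^N||_X \, ||g^N||_X \, ||h^N||_Y$, and the same telescoping estimate as in \eqref{A14} shows that $\{b(f^N, h^N, g^N)\}_N$ is Cauchy in $\mathbb{R}$ with limit independent of the approximating sequences. One then sets $b(f,h,g)$ equal to this limit — equivalently $b(f,h,g) = -b(f,g,h)$, interpreted via Lemma \ref{B} — and the stated bound $|b(f,h,g)| \le C ||f||_X \, ||h||_Y \, ||g||_X$ follows at once.

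For the bilinear assertion I would rerun the density argument of Lemma \ref{B}, this time choosing the third-slot approximants to be divergence free, $g^N \in C_\si^\infty(\mathbb{T}^d)$, so that $b(f^N, h^N, g^N) = \langle B(f^N, h^N), g^N \rangle$. The telescoping estimate then shows $\{B(f^N, h^N)\}_N$ is Cauchy in the dual space $X_\si'$, its limit is independent of the chosen sequences, and one defines $B(f,h) := \lim_{N \rightarrow \infty} B(f^N, h^N)$ in $X_\si'$, obtaining $||B(f,h)||_{X_\si'} \le ||f||_X \, ||h||_Y$.

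I do not anticipate any genuine difficulty: all the analytic content is inherited from Lemma \ref{B}, and \eqref{A15} — valid on smooth functions and hence stable under the limit — reduces the new estimate to the old one. The only point deserving care is the bookkeeping of the target space: because the admissible test functions now occupy the third argument, which ranges over the divergence-free elements of $X$, the bilinear operator $B(f,h)$ must be defined in $X_\si'$ rather than in $Y_\si'$.
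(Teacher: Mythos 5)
Your proposal is correct and matches the paper's argument: the paper likewise derives Lemma \ref{BB} from the antisymmetry identity \eqref{A15} on smooth fields together with the density/Cauchy-sequence scheme of Lemma \ref{B}, with the only change being that the test-function slot is now the third argument, so the target dual space is $X_\si'$. No further comment is needed.
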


With Lemmas \ref{B}-\ref{BB} in hand, we can extend the identity \eqref{A15} to more general function classes.
\begin{proposition}\label{P}
    Let $f \in X_\si$, $g \in X$ and $h \in Y$, one has
    \begin{equation}\label{A16}
        b(f,g,h)  = - b(f,h,g).
    \end{equation}
    In particular, if $Y \subset X$, then for any $f \in X_\si$ and $h \in Y$, it holds that
    \begin{equation}\label{A17}
        b(f,h,h) = 0.
    \end{equation}
\end{proposition}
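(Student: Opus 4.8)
The plan is to establish the antisymmetry \eqref{A16} by passing to the limit in the corresponding identity \eqref{A15} for smooth fields, and then to deduce \eqref{A17} by setting the two arguments equal and exploiting the resulting sign. The whole argument rests on the two continuous extensions already built: Lemma \ref{B} realizes $b$ as a continuous form on $X_\si \times X \times Y$, while Lemma \ref{BB} realizes $b$ as a continuous form on $X_\si \times Y \times X$. The crucial observation is that both extensions are defined by the same density procedure, so a single choice of approximating sequences will compute both limits simultaneously.

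First I would fix, for given $f \in X_\si$, $g \in X$ and $h \in Y$, sequences $f^N \in C_\si^\infty(\mathbb{T}^d)$ and $g^N, h^N \in C^\infty(\mathbb{T}^d)$ with $f^N \to f$ in $X_\si$, $g^N \to g$ in $X$ and $h^N \to h$ in $Y$; such sequences exist because $X_\si$, $X$ and $Y$ are by definition closures of the relevant smooth classes. For each fixed $N$ all three fields are smooth and $f^N$ is divergence-free, so the elementary integration-by-parts identity \eqref{A15} applies:
\begin{equation*}
    b(f^N, g^N, h^N) = -\,b(f^N, h^N, g^N).
\end{equation*}
Now I would let $N \to \infty$ on both sides. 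On the left, the triple $(f^N, g^N, h^N)$ converges to $(f,g,h)$ in $X_\si \times X \times Y$, so by the construction in Lemma \ref{B} the left-hand side tends to $b(f,g,h)$. On the right, the triple $(f^N, h^N, g^N)$ converges to $(f,h,g)$ in $X_\si \times Y \times X$, so by Lemma \ref{BB} the right-hand side tends to $-\,b(f,h,g)$. Since the two sides agree for every $N$, their limits coincide, which is precisely \eqref{A16}.

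For \eqref{A17}, assume in addition $Y \subset X$ and take $h \in Y$. Then $h$ lies in $Y$ (the third slot) and, through the inclusion, also in $X$ (the second slot), so \eqref{A16} is applicable with $g = h$, giving $b(f,h,h) = -\,b(f,h,h)$ and hence $b(f,h,h) = 0$. I do not expect a serious obstacle here: the substance is entirely contained in the two extension lemmas, and the only point requiring care is bookkeeping, namely using one fixed family $\{f^N, g^N, h^N\}$ to drive both limits so that the per-$N$ identity \eqref{A15} survives the passage to the limit. It is worth invoking explicitly that the extensions of Lemma \ref{B} and Lemma \ref{BB} are independent of the approximating sequences (which those lemmas already assert), so that the limits computed above are the genuine extended values $b(f,g,h)$ and $b(f,h,g)$ rather than sequence-dependent artifacts.
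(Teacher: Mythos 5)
Your argument is correct and coincides with the paper's own proof: both pass to the limit in the smooth identity \eqref{A15} along a single approximating family, using Lemma \ref{B} for the left-hand side and Lemma \ref{BB} for the right-hand side, and then obtain \eqref{A17} by taking $g=h$ under the inclusion $Y \subset X$. Your explicit remark that the extended values are independent of the approximating sequences is exactly the point the lemmas guarantee, so no gap remains.
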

\begin{proof}
    Let $f^N \in C_\si^\infty (\mathbb{T}^d)$ and $g^N,h^N \in C^\infty (\mathbb{T}^d)$ satisfy \eqref{A12}.
    From Lemmas \ref{B}-\ref{BB} and \eqref{A15}, one obtains
    \begin{equation}
        \begin{aligned}
            b(f,g,h) & = \lim_{N \rightarrow \infty} b(f^N,g^N,h^N) \\
            & = \lim_{N \rightarrow \infty} (- b(f^N,h^N,g^N)) \\
            & = - b (f,h,g).
        \end{aligned}
    \end{equation}
    When $Y \subset X$, one immediately concludes \eqref{A17} by taking $g=h$ in \eqref{A16}.
\end{proof}

\section{Proof of Theorem \ref{UR}}\label{WS}

In this appendix, we prove the weak-strong type uniqueness result given in Theorem \ref{UR}. Denote by $\mathbb{X}:=L^\infty (\tau_1,\tau_2; L^2(\mathbb{T}^d)) \cap L^2 (\tau_1,\tau_2; \Dot{H}^\al(\mathbb{T}^d))$ and $\mathbb{X}_\si:=L^\infty (\tau_1,\tau_2; L_\si^2(\mathbb{T}^d)) \cap L^2 (\tau_1,\tau_2; \Dot{H}_\si^\al(\mathbb{T}^d))$. In the remainder, for simplicity, we shall always use the notation
\begin{equation}
    ||f||_{L_\tau^m; X} := 
    \begin{cases}
        \left( \int_{\tau_1}^{\tau_2} ||f(s)||_X^m ds \right)^{\frac{1}{m}}, ~\ \ \ \ \ \  1 \leq m < \infty, \\
        {\rm esssup}_{s \in [\tau_1,\tau_2]} ||f(s)||_X,~\ \ \ \ \ \ m = \infty,
    \end{cases}
 \end{equation}
for any given $f \in L^m(\tau_1,\tau_2; X)$, with $X$ being a Banach space. Recall the hypotheses on $\be,q,r$ being given in Theorem \ref{UR}, we have
\begin{lemma}\label{A}
    The operator $(f,g,h)\mapsto \int_{\tau_1}^{\tau_2} b(f,g,h) ds$ can be extended to be a continuous trilinear one from $\mathbb{X}_\si \times \mathbb{X} \times L^q (\tau_1,\tau_2; \Dot{W}_x^{\be,r})$ to $\mathbb{R}$. Moreover, for any $t \in [\tau_1,\tau_2]$, it holds that
    \begin{equation}\label{A1}
            \left| \int_{\tau_1}^t b(f,g,h) ds \right| \leq C \left( ||g||_{L_\tau^\infty L_x^2}^{1-\theta} ||g||_{L_\tau^2 \Dot{H}_x^\al}^\theta ||f||_{L_\tau^2 \Dot{H}_x^\al} 
            + ||f||_{L_\tau^\infty L_x^2}^{1-\theta} ||f||_{L_\tau^2 \Dot{H}_x^\al}^\theta ||g||_{L_\tau^2 \Dot{H}_x^\al} \right) ||h||_{L_\tau^q \Dot{W}_x^{\be,r}},
    \end{equation}
    where $\theta = \frac{d}{\al}\left( \frac{1}{r} - \frac{\al+\be-1}{d} \right) \in (0,1)$, and 
    \begin{equation}\label{A2}
        \left| \int_{\tau_1}^t b(f,f,h) ds \right| \leq \int_{\tau_1}^t ||\Lambda^\al f||_{ L_x^2}^2 ds + C \int_{\tau_1}^t ||f||_{L_x^2}^2 ||h||_{\Dot{W}_x^{\be,r}}^q ds.
    \end{equation}
\end{lemma}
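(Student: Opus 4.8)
The plan is to first establish a pointwise-in-space bound for smooth fields,
\[
|b(f,g,h)| \lesssim \Big( \|f\|_{\Dot{H}_x^\al}\,\|g\|_{L_x^2}^{1-\theta}\|g\|_{\Dot{H}_x^\al}^{\theta} + \|g\|_{\Dot{H}_x^\al}\,\|f\|_{L_x^2}^{1-\theta}\|f\|_{\Dot{H}_x^\al}^{\theta}\Big)\,\|h\|_{\Dot{W}_x^{\be,r}},
\]
valid for $f\in C_\si^\infty(\mathbb{T}^d)$ and $g,h\in C^\infty(\mathbb{T}^d)$, then to integrate in time and to extend the form by density exactly as in Appendix A. To derive the spatial bound I would use the divergence-free condition on $f$ to write $(f\cdot\nabla)g=\mathrm{div}(f\otimes g)$ componentwise, so that $b(f,g,h)=\langle \mathrm{div}(f\otimes g),h\rangle$. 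Pairing in the duality $\Dot{W}_x^{-\be,r'}$–$\Dot{W}_x^{\be,r}$ and using the $L^{r'}$-boundedness of the Riesz transforms $\partial_j\Lambda^{-1}$ (legitimate since $1<r'<\infty$, as $r>\frac{d}{\al}>1$) gives $|b(f,g,h)|\lesssim \|f\otimes g\|_{\Dot{W}_x^{1-\be,r'}}\|h\|_{\Dot{W}_x^{\be,r}}$; here $1-\be\in(0,\al]$ because the hypotheses $\be\ge 1-\al$ and $\be<\frac{d}{r}+1-\al<1$ (using $r>\frac{d}{\al}$) force $\be\in[1-\al,1)$.

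Applying the fractional product rule, Lemma \ref{PR}, with $\nu=1-\be$ splits $\|\Lambda^{1-\be}(f\otimes g)\|_{L_x^{r'}}$ into $\|\Lambda^{1-\be}f\|_{L_x^{r_2}}\|g\|_{L_x^{q_2}}+\|f\|_{L_x^{q_1}}\|\Lambda^{1-\be}g\|_{L_x^{r_1}}$. I would then choose the exponents so that each $\Lambda^{1-\be}$-factor is controlled via the Sobolev embedding $\Dot{W}_x^{\al,2}\hookrightarrow\Dot{W}_x^{1-\be,r_2}$ (which requires $\tfrac1{r_2}=\tfrac12-\tfrac{\al+\be-1}{d}$, admissible since $\al+\be-1\ge 0$), and each undifferentiated factor by the Gagliardo–Nirenberg interpolation $\|g\|_{L_x^{q_2}}\lesssim\|g\|_{L_x^2}^{1-\theta}\|\Lambda^\al g\|_{L_x^2}^{\theta}$ with $\tfrac1{q_2}=\tfrac12-\tfrac{\theta\al}{d}$. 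The constraint $\tfrac1{r_2}+\tfrac1{q_2}=\tfrac1{r'}$ then pins down the interpolation parameter to be precisely $\theta=\frac{d}{\al}\big(\frac1r-\frac{\al+\be-1}{d}\big)$, and the prescribed ranges of $\be,r$ guarantee $0<\theta<1$ together with all intermediate Lebesgue indices lying in $(1,\infty)$.

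Integrating in $s$ and applying Hölder in time with exponents $(2,\tfrac2\theta,q)$ to the three factors $\|f\|_{\Dot{H}_x^\al}$, $\|g\|_{\Dot{H}_x^\al}^{\theta}$, $\|h\|_{\Dot{W}_x^{\be,r}}$ (after extracting $\|g\|_{L_\tau^\infty L_x^2}^{1-\theta}$) yields \eqref{A1}, provided $\tfrac12+\tfrac\theta2+\tfrac1q=1$, i.e. $q=\tfrac{2}{1-\theta}$; a short computation shows this is exactly equivalent to the hypothesis $\tfrac{2\al}{q}+\tfrac{d}{r}=2\al-1+\be$, so the time integrability is automatic. For \eqref{A2} I would set $g=f$ in the spatial bound to obtain $|b(f,f,h)|\lesssim\|f\|_{\Dot{H}_x^\al}^{1+\theta}\|f\|_{L_x^2}^{1-\theta}\|h\|_{\Dot{W}_x^{\be,r}}$, and then apply Young's inequality pointwise in $s$ with conjugate exponents $\tfrac{2}{1+\theta}$ and $\tfrac{2}{1-\theta}=q$, which produces $\|\Lambda^\al f\|_{L_x^2}^2+C\|f\|_{L_x^2}^2\|h\|_{\Dot{W}_x^{\be,r}}^{q}$; integrating in $s$ gives \eqref{A2}.

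Finally, the continuous trilinear extension to $\mathbb{X}_\si\times\mathbb{X}\times L^q(\tau_1,\tau_2;\Dot{W}_x^{\be,r})$ follows from \eqref{A1} — which bounds $\big|\int_{\tau_1}^{\tau_2} b\,ds\big|$ by $C\|f\|_{\mathbb{X}}\|g\|_{\mathbb{X}}\|h\|_{L_\tau^q\Dot{W}_x^{\be,r}}$ — together with the density of smooth (resp. smooth divergence-free) fields, via the Cauchy-sequence argument already carried out for Lemma \ref{B}. I expect the principal obstacle to be precisely the exponent bookkeeping in the spatial estimate: one must verify simultaneously that $1-\be\in(0,\al]$, that the Sobolev and Gagliardo–Nirenberg exponents are admissible with all intermediate indices in $(1,\infty)$ and nonnegative differentiability gains, and that the emergent interpolation parameter coincides with the prescribed $\theta$ and lies in $(0,1)$. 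Once these are checked against $r>\frac{d}{\al}$, $\be\in[1-\al,\frac{d}{r}+1-\al)$ and the scaling identity, the time integration and the density extension are routine.
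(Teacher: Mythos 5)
Your proposal is correct and follows essentially the same route as the paper's proof: integration by parts to reach $\|\Lambda^{1-\be}(f\otimes g)\|_{L_x^{r'}}\|\Lambda^\be h\|_{L_x^{r}}$, the fractional product rule of Lemma \ref{PR}, Sobolev/Gagliardo--Nirenberg interpolation pinning down the same $\theta$, H\"older in time with $q=\tfrac{2}{1-\theta}$ (equivalent to the scaling identity), Young's inequality for \eqref{A2}, and the density extension via Lemma \ref{B}. Your exponent bookkeeping checks out against the hypotheses $r>\tfrac{d}{\al}$ and $\be\in[1-\al,\tfrac{d}{r}+1-\al)$, so no gap.
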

\begin{proof}
    For any $f \in C_\si^\infty \left(\mathbb{T}^d \right)$ and $g,h \in C^\infty \left(\mathbb{T}^d \right)$, by using Lemma \ref{PR} we have
    \begin{equation}
        \begin{aligned}
            \left| b(f,g,h) \right| & = \left| \int_{\mathbb{T}^d} (f \cdot \nabla) g \cdot h\, dx \right| \\
            & \leq ||\Lambda^{1-\be}(f \otimes g)||_{L_x^{r'}} ||\Lambda ^\be h||_{L_x^r} \\
            & \lesssim \Big( ||\Lambda^{1-\be}f||_{L_x^{r_1}}||g||_{L_x^{r_2}} + ||f||_{L_x^{r_2}} ||\Lambda^{1-\be}g||_{L_x^{r_1}} \Big) ||\Lambda^\be h||_{L_x^r},
        \end{aligned} 
    \end{equation}
    where $\frac{1}{r_1}+\frac{1}{r_2}=\frac{1}{r'}=1-\frac{1}{r}$, and further using the Gagliardo–Nirenberg inequality and the Holder inequality, it follows
    \begin{equation}\label{B11}
            \left| b(f,g,h) \right| \lesssim \Big( ||\Lambda^\al f||_{L_x^2} ||\Lambda^\al g||_{L_x^2}^\theta ||g||_{L_x^2}^{1-\theta} +||\Lambda^\al g||_{L_x^2} ||\Lambda^\al f||_{L_x^2}^\theta ||f||_{L_x^2}^{1-\theta} \Big) ||\Lambda^\be h||_{L_x^r},
    \end{equation}
    from which and Lemma \ref{B} one gets that $b(\cdot,\cdot,\cdot) : H_\si^\al \times H_x^\al \times \Dot{W}_x^{\be,r} \rightarrow \mathbb{R}$ is a well-defined continuous trilinear operator, and \eqref{B11} holds for any $f \in H_\si^\al$, $g \in H_x^\al$ and $h \in \Dot{W}_x^{\be,r}$.

    Therefore, for any $f \in \mathbb{X}_\si$, $g \in \mathbb{X}$ and $h \in L^q (\tau_1,\tau_2; \Dot{W}_x^{\be,r})$, by using Holder's inequality one gets for any $t \in (\tau_1,\tau_2]$,
    \begin{equation}\label{A11}
        \begin{aligned}
            \left| \int_{\tau_1}^t b(f,g,h) ds \right| & \lesssim \int_{\tau_1}^t \Big( ||\Lambda^\al f||_{L_x^2} ||\Lambda^\al g||_{L_x^2}^\theta ||g||_{L_x^2}^{1-\theta} +||\Lambda^\al g||_{L_x^2} ||\Lambda^\al f||_{L_x^2}^\theta ||f||_{L_x^2}^{1-\theta} \Big) ||\Lambda^\be h||_{L_x^r} ds \\
            & \lesssim \Bigg \{ ||g||_{L_\tau^\infty L_x^2}^{1-\theta} \left( \int_{\tau_1}^t \left( ||\Lambda^\al f||_{L_x^2} ||\Lambda^\al g||_{L_x^2}^\theta \right)^{q'} ds \right)^{\frac{1}{q'}} \\
            & \,\,\,\,\,\, + ||f||_{L_\tau^\infty L_x^2}^{1-\theta} \left( \int_{\tau_1}^t \left( ||\Lambda^\al g||_{L_x^2} ||\Lambda^\al f||_{L_x^2}^\theta \right)^{q'} ds \right)^{\frac{1}{q'}} \Bigg \} ||h||_{L_\tau^q \Dot{W}_x^{\be,r}} \\
            & \lesssim \Bigg \{ ||g||_{L_\tau^\infty L_x^2}^{1-\theta} \left( \int_{\tau_1}^t ||\Lambda^\al f||_{L_x^2}^2 ds \right)^{\frac{1}{2}} \left( \int_{\tau_1}^t ||\Lambda^\al g||_{L_x^2}^2 ds \right)^{\frac{\theta}{2}} \\
            & \,\,\,\,\,\, + ||f||_{L_\tau^\infty L_x^2}^{1-\theta} \left( \int_{\tau_1}^t ||\Lambda^\al g||_{L_x^2}^2 ds \right)^{\frac{1}{2}} \left( \int_{\tau_1}^t ||\Lambda^\al f||_{L_x^2}^2 ds \right)^{\frac{\theta}{2}}\Bigg \} ||h||_{L_\tau^q \Dot{W}_x^{\be,r}} \\
            & = C \Big( ||g||_{L_\tau^\infty L_x^2}^{1-\theta} ||g||_{L_\tau^2 \Dot{H}_x^\al}^\theta ||f||_{L_\tau^2 \Dot{H}_x^\al} + ||f||_{L_\tau^\infty L_x^2}^{1-\theta} ||f||_{L_\tau^2 \Dot{H}_x^\al}^\theta ||g||_{L_\tau^2 \Dot{H}_x^\al} \Big) ||h||_{L_\tau^q \Dot{W}_x^{\be,r}}.
        \end{aligned}
    \end{equation}
    Taking $f=g$ in the first line of \eqref{A11} reads
    \begin{equation}
            \left| \int_{\tau_1}^t b(f,f,h) ds \right| \leq C \int_{\tau_1}^t ||\Lambda^\al f||_{L_x^2}^{1+\theta} ||f||_{L_x^2}^{1-\theta} ||\Lambda^\be h||_{L_x^r} ds,
    \end{equation}
    from which and Young's inequality we conclude \eqref{A2}.
\end{proof}

Similarly, by using Lemma \ref{BB} we obtain
\begin{lemma}\label{AA}
    The operator $(f,g,h)\mapsto \int_{\tau_1}^{\tau_2} b(f,g,h) ds$ can be extended to be a continuous trilinear operator from $\mathbb{X}_\si \times L^q (\tau_1,\tau_2; \Dot{W}_x^{\be,r}) \times \mathbb{X}$ to $\mathbb{R}$. Moreover, for any $t \in (\tau_1,\tau_2]$, it holds that
    \begin{equation}
            \left| \int_{\tau_1}^t b(f,g,h) ds \right| \leq C \left( ||h||_{L_\tau^\infty L_x^2}^{1-\theta} ||h||_{L_\tau^2 \Dot{H}_x^\al}^\theta ||f||_{L_\tau^2 \Dot{H}_x^\al} 
            + ||f||_{L_\tau^\infty L_x^2}^{1-\theta} ||f||_{L_\tau^2 \Dot{H}_x^\al}^\theta ||h||_{L_\tau^2 \Dot{H}_x^\al} \right) ||g||_{L_\tau^q \Dot{W}_x^{\be,r}},
    \end{equation}
    where $\theta = \frac{d}{\al}\left( \frac{1}{r} - \frac{\al+\be-1}{d} \right) \in (0,1)$.
\end{lemma}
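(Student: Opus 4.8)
The cleanest route is to reduce Lemma \ref{AA} to Lemma \ref{A} by antisymmetry. By the identity \eqref{A16} of Proposition \ref{P}, applied at a.e. $t\in(\tau_1,\tau_2)$, one has $b(f,g,h)=-b(f,h,g)$, so that
\begin{equation*}
 \int_{\tau_1}^t b(f,g,h)\,ds = -\int_{\tau_1}^t b(f,h,g)\,ds .
\end{equation*}
In the integrand on the right, the $\mathbb{X}$-valued field $h$ now sits in the middle slot and the $L^q(\tau_1,\tau_2;\Dot{W}_x^{\be,r})$-valued field $g$ in the last slot, which is exactly the configuration handled by Lemma \ref{A}. Applying the estimate \eqref{A1} with the roles ``$g$''$=h$ and ``$h$''$=g$ reproduces verbatim the inequality claimed in Lemma \ref{AA}, with the same exponent $\theta=\frac{d}{\al}\big(\frac1r-\frac{\al+\be-1}{d}\big)$. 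Thus the plan is to make this antisymmetric reduction rigorous in the relevant weak function classes.

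To justify the reduction from first principles—and to supply the extension asserted in the first sentence—I would retrace the proof of Lemma \ref{A} with the second and third slots interchanged, now invoking Lemma \ref{BB} in place of Lemma \ref{B}. For $f\in C_\si^\infty(\mathbb{T}^d)$ and $g,h\in C^\infty(\mathbb{T}^d)$, the divergence-free condition gives $b(f,g,h)=-\langle B(f,h),g\rangle$, moving the derivative onto the $\Dot{W}^{\be,r}$-regular factor $g$; pairing $\Lambda^{-\be}B(f,h)$ against $\Lambda^\be g$ by H\"older with $\frac1{r'}=\frac1{r_1}+\frac1{r_2}=1-\frac1r$, and applying the fractional Leibniz rule (Lemma \ref{PR}) to $\Lambda^{1-\be}(f\otimes h)$, yields
\begin{equation*}
 |b(f,g,h)| \lesssim \big( \|\Lambda^{1-\be}f\|_{L_x^{r_1}}\|h\|_{L_x^{r_2}} + \|f\|_{L_x^{r_2}}\|\Lambda^{1-\be}h\|_{L_x^{r_1}} \big)\|\Lambda^\be g\|_{L_x^r}.
\end{equation*}
The Gagliardo--Nirenberg inequality then converts the $\Lambda^{1-\be}$- and $L^{r_2}$-norms into products of $\|\Lambda^\al\cdot\|_{L_x^2}$ and $\|\cdot\|_{L_x^2}$, producing the exponent $\theta$ and the fixed-time bound that is \eqref{B11} with $g$ and $h$ exchanged. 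Lemma \ref{BB} (with $X=H^\al$, $Y=\Dot{W}^{\be,r}$) then extends $b$ continuously to $H_\si^\al\times\Dot{W}_x^{\be,r}\times H_x^\al$, and Proposition \ref{P} ensures this extension is consistent with the antisymmetric relation used above. Finally I would integrate in $t$ and run the same H\"older chain as in \eqref{A11}: factor out $\|h\|_{L_\tau^\infty L_x^2}^{1-\theta}$ (resp.\ $\|f\|_{L_\tau^\infty L_x^2}^{1-\theta}$), split the remaining time integral by H\"older with exponents $q$ and $q'$ against $\|g\|_{L_\tau^q\Dot{W}_x^{\be,r}}$, then split the $q'$-integral with exponents $2$ and $2/\theta$; the scaling relation $\frac{2\al}{q}+\frac{d}{r}=2\al-1+\be$ is exactly what forces $\frac1{q'}=\frac{1+\theta}{2}$, so the time exponents close.

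The Leibniz and Gagliardo--Nirenberg steps are identical to those already carried out in Lemma \ref{A}, so they are routine. The only genuine verifications are that $\theta\in(0,1)$ under the standing hypotheses and that the antisymmetry \eqref{A16} is being applied in the correct classes. For the former, positivity of $\theta$ is the condition $\frac{d}{r}>\al+\be-1$, i.e.\ the upper bound $\be<\frac{d}{r}+1-\al$, while $\theta<1$ amounts to $\frac{d}{r}<2\al+\be-1$, which holds since $r>\frac{d}{\al}$ forces $\frac{d}{r}<\al\le 2\al+\be-1$ (using $\be\ge1-\al$). For the latter, one merely relabels Proposition \ref{P} so that the middle argument lies in $Y=\Dot{W}^{\be,r}$ and the last in $X=H^\al$. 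Neither presents a real obstacle, which is precisely why the result follows ``similarly'' to Lemma \ref{A}.
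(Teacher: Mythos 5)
Your proposal is correct and takes essentially the same route as the paper, whose entire proof of Lemma \ref{AA} is the remark ``Similarly, by using Lemma \ref{BB}'' --- i.e., rerunning the proof of Lemma \ref{A} with the second and third slots exchanged (integration by parts off the $\Dot{W}^{\be,r}$ factor, the fractional Leibniz rule of Lemma \ref{PR}, Gagliardo--Nirenberg, then the time-H\"older chain), which is exactly your second paragraph; your antisymmetry shortcut via \eqref{A16} is an equivalent repackaging fully supported by Proposition \ref{P} and Lemma \ref{BB}. Your exponent bookkeeping is also accurate: the scaling relation $\frac{2\al}{q}+\frac{d}{r}=2\al-1+\be$ indeed gives $\frac{1}{q'}=\frac{1+\theta}{2}$, and $\theta\in(0,1)$ follows from $\be<\frac{d}{r}+1-\al$ and $r>\frac{d}{\al}$ together with $\be\geq 1-\al$.
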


Therefore, let $\mathbb{Y} := \mathbb{X} \cap L^q (\tau_1,\tau_2; \Dot{W}^{\be,r}(\mathbb{T}^d))$ and $\mathbb{Y}_\si := \mathbb{X}_\si \cap L^q (\tau_1,\tau_2; \Dot{W}_\si^{\be,r}(\mathbb{T}^d))$, we obtain

\begin{proposition}\label{AAA}
    Assume that $\psi \in \mathbb{Y}_\si$, and let $v_1 \in \mathbb{Y}_\si$, $v_2 \in \mathbb{X}_\si$ are two weak solutions to the following system on $[\tau_1, \tau_2]$,
    \begin{equation}\label{AW}
        \left\{
        \begin{aligned}
            & \partial_t v + (-\Delta)^\al v + B(v,v) + B(v,\psi) + B(\psi,v) + B(\psi,\psi) = 0, \\
            & \mathrm{div}\, v = 0,
        \end{aligned}
        \right.
    \end{equation}
    in the sense of $\mathcal{D}'(\mathbb{T}^d)$. Then for almost sure $\tau_1 \leq t_0 < t \leq \tau_2$, it holds that
    \begin{equation}\label{A3}
        \begin{aligned}
            & \langle v_1(t),v_2(t) \rangle + 2\int_{t_0}^t \langle \Lambda^\al v_1, \Lambda^\al v_2 \rangle ds 
            =  \langle v_1(t_0),v_2(t_0) \rangle\\
            &+ \int_{t_0}^t \Big( b(v_2,v_1,v_2) + b(v_2,v_1,\psi) + b(\psi,v_1,\psi) 
             + b(v_1,v_2,v_1) + b(v_1,v_2,\psi) + b(\psi,v_2,\psi) \Big) ds.
        \end{aligned}
    \end{equation}
\end{proposition}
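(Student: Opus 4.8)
The plan is to obtain \eqref{A3} as the rigorous form of the ``mixed energy identity'' that results from pairing the equation \eqref{AW} for $v_1$ with $v_2$ and the equation for $v_2$ with $v_1$, and adding. Formally, using $\langle(-\Delta)^\al v_1,v_2\rangle=\langle\Lambda^\al v_1,\Lambda^\al v_2\rangle$ and $\langle B(f,g),h\rangle=b(f,g,h)$,
\begin{equation*}
\langle\partial_t v_1,v_2\rangle=-\langle\Lambda^\al v_1,\Lambda^\al v_2\rangle-b(v_1,v_1,v_2)-b(v_1,\psi,v_2)-b(\psi,v_1,v_2)-b(\psi,\psi,v_2),
\end{equation*}
together with the analogous expression for $\langle\partial_t v_2,v_1\rangle$ obtained by exchanging the indices $1$ and $2$. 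Adding the two and integrating over $[t_0,t]$ produces $\langle v_1(t),v_2(t)\rangle+2\int_{t_0}^t\langle\Lambda^\al v_1,\Lambda^\al v_2\rangle\,ds$ on the left and $\langle v_1(t_0),v_2(t_0)\rangle$ minus the integral of eight cubic terms on the right. Applying the antisymmetry relation $b(f,g,h)=-b(f,h,g)$ of Proposition \ref{P} to each of these eight terms turns six of them into exactly the six cubic terms on the right-hand side of \eqref{A3}, while the remaining two, $b(\psi,v_2,v_1)$ and $b(\psi,v_1,v_2)$, cancel against each other since they are negatives of one another by the same antisymmetry. Thus the formal computation already yields \eqref{A3}; the content of the proof is to justify each step.

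To do so I would regularize in the time variable. Let $\rho_\epsilon$ be a standard mollifier and set $v_{i,\epsilon}:=v_i\ast_t\rho_\epsilon$ for $i=1,2$. On every interval compactly contained in $(\tau_1,\tau_2)$ the functions $v_{i,\epsilon}$ are smooth in $t$ with values in $L_\si^2\cap\Dot{H}_\si^\al$, are divergence free, and satisfy the time-mollified version of \eqref{AW}. Because $v_{i,\epsilon}$ is now a legitimate pairing partner, the product rule
\begin{equation*}
\tfrac{d}{dt}\langle v_{1,\epsilon},v_{2,\epsilon}\rangle=\langle\partial_t v_{1,\epsilon},v_{2,\epsilon}\rangle+\langle v_{1,\epsilon},\partial_t v_{2,\epsilon}\rangle
\end{equation*}
holds classically, and substituting the mollified equations and integrating over $[t_0,t]$ gives the exact analogue of the identity above, in which the viscous term is $2\int_{t_0}^t\langle\Lambda^\al v_{1,\epsilon},\Lambda^\al v_{2,\epsilon}\rangle\,ds$ and every cubic term appears as a mollified spatial pairing such as $\int_{t_0}^t\langle[B(v_1,v_1)]_\epsilon,v_{2,\epsilon}\rangle\,ds$.

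The main obstacle is passing to the limit $\epsilon\to0$ in these cubic terms, and this is exactly what Lemmas \ref{A} and \ref{AA} are designed for. The key manoeuvre is to use the self-adjointness of time-convolution to move one mollifier off the nonlinearity and onto the test partner: for instance $\int_{t_0}^t\langle[B(v_1,v_1)]_\epsilon,v_{2,\epsilon}\rangle\,ds$ becomes, up to boundary contributions that vanish with $\epsilon$, $\int_{t_0}^t b(v_1,v_1,v_2\ast_t\sigma_\epsilon)\,ds$ with $\sigma_\epsilon$ another approximate identity, so that no commutator between mollification and the bilinear product is ever needed. Since $v_2\ast_t\sigma_\epsilon\to v_2$ strongly in $L^2(\tau_1,\tau_2;\Dot{H}^\al)$ while all factors stay uniformly bounded in $L_\tau^\infty L_x^2$, $L_\tau^2\Dot{H}_x^\al$ and (for the $\psi$-factors, which are unmollified) $L_\tau^q\Dot{W}_x^{\be,r}$, the continuity of the trilinear forms in Lemmas \ref{A} and \ref{AA} lets me pass to the limit by writing the difference from the target as a telescoping sum in which exactly one argument is perturbed at a time. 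The remaining limits are routine: the viscous term converges by the same strong $L^2\Dot{H}^\al$ convergence, and the boundary terms $\langle v_{i,\epsilon}(t_0),v_{j,\epsilon}(t_0)\rangle$ and $\langle v_{i,\epsilon}(t),v_{j,\epsilon}(t)\rangle$ converge to their unmollified values at every $t_0,t$ that are $L^2$-Lebesgue points of both $v_1$ and $v_2$ --- which is precisely the source of the ``almost every $\tau_1\le t_0<t\le\tau_2$'' in the statement. Collecting these limits and invoking Proposition \ref{P} as above recovers \eqref{A3} exactly.
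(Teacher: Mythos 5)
Your proposal is correct and follows essentially the same strategy as the paper: derive the cross-energy identity for regularized functions, pass to the limit using the trilinear continuity of Lemmas \ref{A} and \ref{AA}, and reduce the eight cubic terms to the six in \eqref{A3} via the antisymmetry of Proposition \ref{P}, including the cancellation of $b(\psi,v_2,v_1)+b(\psi,v_1,v_2)$. The only real difference is the regularization device: the paper, following Ribaud, approximates $v_1,v_2$ by smooth divergence-free space--time functions satisfying \eqref{A24}--\eqref{A29} and tests each weak formulation against the approximant of the other solution, whereas you mollify the solutions in time and transpose the mollifier by adjointness; both routes hinge on the same lemmas and yield the identity for a.e.\ $t_0<t$.
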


\begin{proof}
    For simplicity, we shall omit subscripts $\si$ in the following calculation.
    
    (1) First, we show that
    \begin{equation}\label{A21}
        \partial_t v_1 \in \mathbb{X}',\,\,\,\,\,\,\partial_t v_2 \in \mathbb{Y}',
    \end{equation}
    where $\mathbb{X}',\mathbb{Y}'$ denote the dual space of $\mathbb{X},\mathbb{Y}$ respectively. 
    
    Since $v_1$ is weak solution of \eqref{AW} in the sense of $\mathcal{D}'(\mathbb{T}^d)$, by using Lemma \ref{A} we know for any divergence free $\phi \in C_c^\infty\left( [\tau_1,\tau_2] \times \mathbb{T}^d \right)$, it holds that
    \begin{equation}\label{A22}
        \begin{aligned}
            & \left| \int_{\tau_1}^{\tau_2} \langle \partial_t v_1, \phi \rangle ds \right| \\
            = & \left| -\int_{\tau_1}^{\tau_2} \langle \Lambda^\al v_1, \Lambda^\al \phi \rangle + b(v_1,\phi,v_1) + b(v_1,\phi,\psi) + b(\psi,\phi,v_1) + b(\psi,\phi,\psi) ds \right| \\
            \lesssim & \, ||v_1||_{L_\tau^2 \Dot{H}_x^\al} ||\phi||_{L_\tau^2 \Dot{H}_x^\al} + \left(||v_1||_{\mathbb{Y}}^2 + ||\psi||_{\mathbb{Y}}^2 \right)
            ||\phi||_{\mathbb{X}},
        \end{aligned}
    \end{equation}
    which implies $\partial_t v_1 \in \mathbb{X}'$. Similarly, by using Lemma \ref{AA} one has
    \begin{equation}\label{A23}
        \begin{aligned}
            & \left| \int_{\tau_1}^{\tau_2} \langle \partial_t v_2, \phi \rangle ds \right| \\
            = & \left| -\int_{\tau_1}^{\tau_2} \langle \Lambda^\al v_2, \Lambda^\al \phi \rangle + b(v_2,\phi,v_2) + b(v_2,\phi,\psi) + b(\psi,\phi,v_2) + b(\psi,\phi,\psi) ds \right| \\
            \lesssim & \,  ||v_2||_{L_\tau^2 \Dot{H}_x^\al} ||\phi||_{L_\tau^2 \Dot{H}_x^\al} + \left(||v_2||_{\mathbb{X}}^2 + ||\psi||_{\mathbb{X}}^2 \right)
            ||\phi||_{\mathbb{Y}},
        \end{aligned}
    \end{equation}
    which yields $\partial_t v_2 \in \mathbb{Y}'$. 

    (2) Now, as in \cite{RF} let us choose $v_1^N,v_2^N \in C_c^\infty \left( [\tau_1,\tau_2] \times \mathbb{T}^d \right)$ satisfying
    \begin{equation}\label{A24}
        \mathrm{div}\,v_1^N=0,\,\,\,\,\,\,\mathrm{div}\,v_2^N=0,
    \end{equation}
    \begin{equation}\label{A25}
        v_j^N \xrightarrow{N\rightarrow \infty} v_j \quad \mathrm{in}\, L^2 (\tau_1,\tau_2; \Dot{H}_x^\al)\,\,\,\,\,\,(j=1,2),
    \end{equation}
    \begin{equation}\label{A26}
        v_1^N \xrightarrow{N \rightarrow \infty} v_1,\,\,\mathrm{in}\, L^q (\tau_1,\tau_2; \Dot{W}_x^{\be,r}),
    \end{equation}
    \begin{equation}\label{A27}
        ||v_j^N||_{L_\tau^\infty L_x^2} \leq ||v_j||_{L_\tau^\infty L_x^2}\,\,\,\,\,\,(j=1,2),
    \end{equation}
    \begin{equation}\label{A28}
        \langle \partial_t v_1^N, \phi \rangle_{\mathrm{X}' \times \mathrm{X}} \xrightarrow{N\rightarrow \infty} \langle \partial_t v_1, \phi \rangle_{\mathrm{X}' \times \mathrm{X}},\,\,\,\,\,\,\forall\, \phi \in \mathbb{X},
    \end{equation}
    \begin{equation}\label{A29}
        \langle \partial_t v_2^N, \phi \rangle_{\mathrm{Y}' \times \mathrm{Y}} \xrightarrow{N\rightarrow \infty} \langle \partial_t v_2, \phi \rangle_{\mathrm{Y}' \times \mathrm{Y}},\,\,\,\,\,\,\forall\, \phi \in \mathbb{Y}.
    \end{equation}
    Since $v_1 \in \mathbb{Y}$ is a weak solution to the problem \eqref{AW}, by taking $v_2^N$ as a text function it yields that for any $\tau_1 \leq t_0 < t \leq \tau_2$,
    \begin{equation}
        \begin{aligned}
            \int_{t_0}^t \langle \partial_t v_1, v_2^N \rangle  ds + \int_{t_0}^t & \Big( \langle \Lambda^\al v_1, \Lambda^\al v_2^N \rangle - b(v_1,v_2^N,v_1) \\
            & - b(v_1,v_2^N,\psi) - b(\psi,v_2^N,v_1) - b(\psi,v_2^N,\psi) \Big) ds = 0,
        \end{aligned}
    \end{equation}
   which implies
    \begin{equation}\label{A30}
        \begin{aligned}
            & \langle v_1(t),v_2^N(t) \rangle + \int_{t_0}^t \langle \Lambda^\al v_1, \Lambda^\al v_2^N \rangle ds
            = \langle v_1(t_0),v_2^N(t_0) \rangle \\
            & + \int_{t_0}^t \Big( \langle v_1, \partial_t v_2^N \rangle + b(v_1,v_2^N,v_1) + b(v_1,v_2^N,\psi) + b(\psi,v_2^N,v_1) + b(\psi,v_2^N,\psi) \Big) ds.
        \end{aligned}
    \end{equation}
   By using \eqref{A1}, \eqref{A25} and \eqref{A27}, we have
    \begin{equation}
        \int_{t_0}^t b(v_1,v_2^N,v_1) ds \xrightarrow{N \rightarrow \infty} \int_{t_0}^t b(v_1,v_2,v_1) ds,
    \end{equation}
   and  other nonlinear terms on the right hand side of \eqref{A30} can be treated similarly. Thus by passing $N \rightarrow \infty$ on both sides of \eqref{A30}, one obtains
    \begin{equation}\label{A31}
        \begin{aligned}
            & \langle v_1(t),v_2(t) \rangle + \int_{t_0}^t \langle \Lambda^\al v_1, \Lambda^\al v_2 \rangle ds 
            = \langle v_1(t_0),v_2(t_0) \rangle \\
            & + \int_{t_0}^t \Big( \langle v_1, \partial_t v_2 \rangle + b(v_1,v_2,v_1) + b(v_1,v_2,\psi) + b(\psi,v_2,v_1) + b(\psi,v_2,\psi) \Big) ds.
        \end{aligned}
    \end{equation}
    In the same way as above, we can get
    \begin{equation}\label{A32}
        \begin{aligned}
            & \langle v_2(t),v_1(t) \rangle + \int_{t_0}^t \langle \Lambda^\al v_2, \Lambda^\al v_1 \rangle ds 
            = \langle  v_2(t_0),v_1(t_0) \rangle \\
            & + \int_{t_0}^t \Big( \langle v_2, \partial_t v_1 \rangle + b(v_2,v_1,v_2) + b(v_2,v_1,\psi) + b(\psi,v_1,v_2) + b(\psi,v_1,\psi) \Big) ds.
        \end{aligned}
    \end{equation}
    Putting \eqref{A31}-\eqref{A32} together and using \eqref{A16}, we conclude \eqref{A3}.
\end{proof}

\noindent \textbf{Proof of Theorem \ref{UR}:}

Replacing $v_2$ in \eqref{A3} by $v_1$ and using \eqref{A17} we know that for any $t \in (\tau_1,\tau_2]$, $v_1$ satisfies the following energy inequality
\begin{equation}\label{UR2}
    ||v_1(t)||_{L_x^2}^2 + 2\int_{t_0}^t ||\Lambda^\al v_1||_{L_x^2}^2 ds \leq ||v_1(t_0)||_{L_x^2}^2 + 2\int_{t_0}^t (b(v_1,v_1,\psi) + b(\psi,v_1,\psi)) ds.
\end{equation}
Let $v=v_1-v_2$, for any $t \in (\tau_1,\tau_2]$, by using \eqref{UR1} and \eqref{UR2} one has
\begin{equation}
    \begin{aligned}
        & ||v(t)||_{L_x^2}^2 + 2\int_{\tau_1}^t ||\Lambda^\al v||_{L_x^2}^2 ds \\
        \leq & ||v_1(t)||_{L_x^2}^2 + ||v_2(t)||_{L_x^2}^2 - 2\langle v_1(t), v_2(t) \rangle + 2\int_{\tau_1}^t (||\Lambda^\al v_1||_{L_x^2}^2 + ||\Lambda^\al v_2||_{L_x^2}^2 -2 \langle \Lambda^\al v_1, \Lambda^\al v_2 \rangle )ds \\
        \leq & 2||v_0||_{L_x^2}^2 + 2\int_{\tau_1}^t (b(v_1,v_1,\psi) + b(\psi,v_1,\psi) + b(v_2,v_2,\psi) + b(\psi,v_2,\psi)) ds \\
        & - 2\langle v_1(t), v_2(t) \rangle  - 4\int_{\tau_0}^t \langle   \Lambda^\al v_1, \Lambda^\al v_2 \rangle ds. \\    
    \end{aligned}
\end{equation}
By using \eqref{A3} and Proposition \ref{P}, we have
\begin{equation}\label{AA1}
    \begin{aligned}
        & ||v(t)||_{L_x^2}^2 + 2\int_{\tau_1}^t ||\Lambda^\al v||_{L_x^2}^2 ds \\
        \leq & 2\int_{\tau_1}^t \Big( b(v_2,v_2,\psi) + b(\psi,v_2,\psi)+ b(v_1,v_1,\psi) + b(\psi,v_1,\psi)  +b(v_2,v_1,v_2) \\
        & + b(v_2,v_1,\psi) + b(\psi,v_1,\psi) 
        + b(v_1,v_2,v_1) + b(v_1,v_2,\psi) + b(\psi,v_2,\psi) \Big) ds \\
        = & 2\int_{\tau_1}^t (b(v,v,v_1) + b(v,v,\psi)) ds.
    \end{aligned}
\end{equation}
Clearly, \eqref{A2} implies
\begin{equation}
    \left| 2\int_{\tau_1}^t (b(v,v,v_1) + b(v,v,\psi)) ds \right| \leq \int_{\tau_1}^t ||\Lambda^\al v||_{L_x^2}^2 ds + C \int_{\tau_1}^t ||v||_{L_x^2}^2 \left(||v_1||_{\Dot{W}_x^{\be,r}}^q +||\psi||_{\Dot{W}_x^{\be,r}}^q \right) ds,
\end{equation}
from which and \eqref{AA1} one obtains
\begin{equation}\label{AA2}
    ||v(t)||_{L_x^2}^2 \leq C \int_{\tau_1}^t ||v||_{L_x^2}^2 \left(||v_1||_{\Dot{W}_x^{\be,r}}^q +||\psi||_{\Dot{W}_x^{\be,r}}^q \right) ds.
\end{equation}
Note that the hypotheses on $v_1$ and $\psi$, more precisely,
\begin{equation}\label{AA3}
    \int_{\tau_1}^t \left(||v_1||_{\Dot{W}_x^{\be,r}}^q +||\psi||_{\Dot{W}_x^{\be,r}}^q \right) ds \leq ||v_1||_{L_\tau^q \Dot{W}_x^{\be,r}}^q + ||\psi||_{L_\tau^q \Dot{W}_x^{\be,r}}^q < \infty.  
\end{equation}
Combining \eqref{AA2}-\eqref{AA3}, and using the Gronwall inequality, it follows $v=0$ on $[\tau_1,\tau_2]$, the proof is completed.

\vspace{.1in}
\par{\bf Acknowledgements.} This research was supported by National Natural Science Foundation of China under Grant Nos.12331008, 12171317 and 12250710674, National Key R\&D Program of China under grant 2024YFA1013302, and Shanghai Municipal Education Commission under grant 2021-01-07-00-02-E00087.


\begin{thebibliography}{10}
\bibitem{V}
J. Bedrossian, V. Vicol, {\it The Mathematical Analysis of The Incompressible Euler and Navier-Stokes Equations - An Introduction}. Grad. Stud. Math., American Mathematical Society, Providence, RI, 2022.

\bibitem{BF}
F.Boyer, P.Fabrie, {\it Mathematical Tools for The Study of The Incompressible Navier-Stokes Equations and Related Models}.  Appl. Math. Sci., Springer, New York, 2013.

\bibitem{BT}
N. Burq, N. Tzvetkov, Random data Cauchy theory for supercritical wave equations. I. Local theory. {\it Invent. Math.}, 173(2008), 449–475.

\bibitem{BT1}
N. Burq, N. Tzvetkov, Random data Cauchy theory for supercritical wave equations. II. A global existence result. {\it Invent. Math.} 173(2008), 477–496.

\bibitem{CWYY}
R.M. Chen, D. Wang, S. Yao, C. Yu, Almost sure existence of Navier–Stokes equations with randomized data in the
whole space, arXiv:1308.1588, 2013.

\bibitem{CF}
P. Constantin, C. Foias, {\it Navier-Stokes Equations}.  Chicago Lectures in Math., University of Chicago Press, Chicago, IL, 1988.

\bibitem{DC}
C. Deng, S. Cui, Random-data Cauchy problem for the Navier-Stokes equations on $\mathbb{T}^3$. {\it J. Differential Equations} 251(2011), 902–917.

%\bibitem{DC1} C. Deng, S.B. Cui, Random-data Cauchy problem for the periodic Navier–Stokes equations with initial data in
%negative-order Sobolev spaces, arXiv:1103.6170, 2011.

\bibitem{D}
B. Dong, W. Wang, J. Wu, Z. Ye, H. Zhang, Global regularity for a class of 2D generalized tropical climate models. {\it J. Differential Equations} 266(2019), 6346–6382.

\bibitem{DH}
L. Du, H. Wang, Quasigeostrophic equation with random initial data in negative-order Sobolev space. {\it Z. Angew. Math. Phys.} 70(2019), Paper No. 95, 9 pp.

\bibitem{DZ}
L. Du, T. Zhang, Almost sure existence of global weak solutions for incompressible MHD equations in negative-order Sobolev space. {\it J. Differential Equations} 263(2017), 1611–1642.

\bibitem{HP}
M. Hieber, J. Prüss, Heat kernels and maximal $L^p-L^q$ estimates for parabolic evolution equations. {\it Comm. PDE} 22(1997), 1647–1669.

\bibitem{Hopf}
E. Hopf, Über die Anfangswertaufgabe für die hydrodynamischen Grundgleichungen, {\it Math. Nachr.}  4(1951) 213–231.

\bibitem{J}
N. Ju, Existence and uniqueness of the solution to the dissipative 2D quasi-geostrophic equations in the
Sobolev space. {\it Comm. Math. Phys.} 251(2004), 365–376.

\bibitem{LR}
P. G. Lemarié-Rieusset, {\it Recent Developments in The Navier-Stokes Problem}. Chapman
Hall/CRC Res. Notes Math., Chapman  Hall/CRC, Boca Raton, FL, 2002.

\bibitem{Leray}
J. Leray, Sur le mouvement d’un liquide visqueux emplissant l’espace, {\it Acta Math.} 63(1934), 193–248.

\bibitem{MYZ}
C. Miao, B. Yuan, B. Zhang, Well-posedness of the Cauchy problem for the fractional power dissipative equations. {\it Nonlinear Anal.}, 68(2008), 461–484.

\bibitem{NPS}
A.R. Nahmod, N. Pavlovic, G. Staffilani, Almost sure existence of global weak solutions for supercritical Navier–Stokes equations, {\it SIAM J. Math. Anal.} 45(2013), 3431–3452.

\bibitem{QTW}
Z. Qiu, Y. Tang, H. Wang, Well-posedness for the Cahn-Hilliard-Navier-Stokes equations with random initial data. {\it Potential Anal.} 59(2023), 753–770.

\bibitem{S}
C.D. Sogge, {\it Lectures on Non-Linear Wave Equations}, second edition, International Press, Boston, MA, 2008.

\bibitem{RF}
F. Ribaud, A remark on the uniqueness problem for the weak solutions of Navier–Stokes equations, {\it Ann. Fac. Sci. Toulouse Math. (6)} 11(2002), 225–238.

\bibitem{WW}
J.R. Wang, K.Y. Wang, Almost sure existence of global weak solution to the 3D incompressible Navier–Stokes
equation, {\it Discrete Contin. Dyn. Syst.} 37(2017), 5003–5019.

\bibitem{WH}
W. Wang, H. Yue, Almost sure existence of global weak solutions to the Boussinesq equations. {\it Dyn. Partial Differ. Equ.} 17(2020), 165–183.

\bibitem{W}
J. Wu, Generalized MHD equations. {\it J. Differential Equations}, 195(2003), 284–312. 

\bibitem{ZF}
T. Zhang, D. Fang, Random data Cauchy theory for the incompressible three dimensional Navier-Stokes equations. {\it Proc. Amer. Math. Soc.} 139(2011), 2827–2837.

\bibitem{ZF1}
T. Zhang, D. Fang, Random data Cauchy theory for the generalized incompressible Navier-Stokes equations. {\it J. Math. Fluid Mech.} 14(2012), 311–324.



\end{thebibliography}
\end{document}